\documentclass[11pt,a4paper, reqno]{article}


\usepackage{indentfirst} 
\usepackage[utf8]{inputenc}
\usepackage{amsmath, amsthm, amssymb,xfrac, mathrsfs} 
\usepackage{tikz} 
\usetikzlibrary{hobby}
\usetikzlibrary{decorations.pathreplacing}
\usepackage{wrapfig} 
\usepackage{xcolor} 
\usepackage{verbatim} 
\usepackage{enumerate} 

\usepackage{geometry}
\geometry{
  textwidth=140mm,
  margin=1.05in,
}

\usepackage{hyperref} 
\hypersetup{
colorlinks=true,
citecolor=black!50!red,
linkcolor=black!50!red,
linktoc=all
}
\usepackage[all]{hypcap} 


\newcounter{constant}

\setcounter{constant}{0}

\newtheorem{theorem}{Theorem}[section]
\newtheorem{proposition}[theorem]{Proposition}
\newtheorem{lemma}[theorem]{Lemma}

\newtheorem{cor}[theorem]{Corollary}

\numberwithin{equation}{section} 

\theoremstyle{definition}
\newtheorem{definition}[theorem]{Definition}

\newtheorem{remark}[theorem]{Remark}

\renewcommand{\P}{\mathbb{P}}
\newcommand{\E}{\mathbb{E}}
\newcommand{\R}{\mathbb{R}}

\newcommand{\N}{\mathbb{N}}

\newcommand{\PP}{\textnormal{P}}

\newcommand{\EE}{\textnormal{E}}

\DeclareMathOperator{\dist}{d}
\renewcommand{\d}{\textnormal{d}}

\DeclareMathOperator{\poisson}{Poisson}

\DeclareMathOperator{\binomial}{Binomial}

\newcommand{\pnorm}[2]{\left\|#2\right\|_{#1}}
\DeclareMathOperator{\diam}{diam}

\begin{document}

\title{Large deviations for marked sparse random graphs with applications to interacting diffusions}

\date{\today}
\author{Rangel Baldasso
  \thanks{E-mail: \ rangel@puc-rio.br; \ Department of Mathematics, PUC-Rio, Rua Marqu\^{e}s de S\~{a}o Vicente 225, G\'{a}vea, 22451-900 Rio de Janeiro, RJ - Brazil.}
  \and
  Roberto Oliveira
  \thanks{E-mail: \ rimfo@impa.br; \ IMPA, Estrada Dona Castorina 110, 22460-320 Rio de Janeiro, RJ - Brazil.}
  \and
  Alan Pereira
  \thanks{E-mail: \ alan.pereira@im.ufal.br, alan.anderson.math@gmail.com; \ Instituto de Matem\'{a}tica, Universidade Federal de Alagoas, Rua Lorival de Melo Mota s/n, 57072970 Macei\'{o}, AL - Brazil.}
  \and
  Guilherme Reis
  \thanks{E-mail: \ ghpreis@id.uff.br; \ Instituto de Matemática e Estatística, Departamento de Matemática Aplicada (GMA),  Rua Professor Marcos Waldemar de Freitas, s/n
Blocos G e H - Campus do Gragoatá - São Domingos
24.210-201 Niterói - RJ - Brazil.}}
\maketitle

\begin{abstract}
We consider the empirical neighborhood distribution of marked sparse Erd\H{o}s-R\'enyi random graphs, obtained by decorating edges and vertices of a sparse Erd\H{o}s-R\'enyi random graph with i.i.d.\ random elements taking values on Polish spaces.
We prove that the empirical neighborhood distribution of this model satisfies a large deviation principle in the framework of local weak convergence.
We rely on the concept of BC-entropy  introduced by Delgosha and Anantharam~(2019) which is inspired on the previous work by Bordenave and Caputo~(2015).
Our main technical contribution is an approximation result that allows one to pass from graph with marks in discrete spaces to marks in general Polish spaces.
As an application of the results developed here, we prove a large deviation principle for interacting diffusions driven by gradient evolution and defined on top of sparse Erd\H{o}s-R\'enyi random graphs.
In particular, our results apply for the stochastic Kuramoto model.
We obtain analogous results for the sparse uniform random graph with given number of edges.

\medskip

\noindent
\emph{Keywords and phrases:} large deviations, local topology,  sparse random graphs, interacting diffusions, stochastic Kuramoto model.

\noindent
MSC 2010: 60F10, 05C80.
\end{abstract}

\section{Introduction}\label{s:intro}

\par Random graphs appear naturally when one tries to model complex networks and connections within individuals of a community. Many times, the interest lies in properties of a given process that has these random graphs as underlying environment. One usually needs to understand finer properties of the random graph model in order to successfully achieve a deeper understanding of the random process itself. Here, our goal is to further develop tools that allow one to obtain large deviations results for processes defined on top of random graphs.

The main motivation of this work is to understand the particular case of the stochastic Kuramoto model with interaction structure given by random graphs. In order to define this model fix a finite time horizon $T >0$, a (possibly random) sequence of growing graphs $G_{n} = ([n], E_{n})$\footnote{We use the standard notation $[n]=\{1, 2, \dots, n\}$.}, and let $(B_i : i \in [n])$ be a collection of independent standard Brownian motions. The stochastic Kuramoto model is the collection of diffusions $(\theta_i;\,i\in [n])$ that solves the system of It\^o stochastic differential equations
\begin{equation}
  \d \theta_i(t)=\sum_{j:j\sim_{n} i}\sin(\theta_i(t)-\theta_j(t)) \,\d t+\d B_i(t), \qquad t \in [0,T] \text{ and } i \in [n],
\end{equation}
where $j \sim_{n} i$ in the index of the summation above denotes adjacency in the graph $G_{n}$.

Our interest lies in considering the sequence of graphs $G_{n}$ as sparse Erd\H{o}s-R\'enyi random graph with average degree $d>0$. In this context, the quantity of interest is the empirical distribution of the solutions
\begin{equation}\label{eq:empiricalm}
  L_n(\vec{\theta})=\frac{1}{n}\sum_{i=1}^n\delta_{\theta_i},
\end{equation}
which can be seen as a probability measure on the set of bounded continuous functions defined on the interval $[0,T]$.

In the framework of large deviations, Dai Pra and Den Hollander~\cite{dpdh} consider such systems in the mean field case, when the interaction graph $G_n$ is the complete graph. The authors combine Girsanov's Theorem and It\^o's formula to derive that
\begin{equation}\label{eq:RN}
\begin{gathered}
\text{the Radon-Nykodim derivative of } L_n(\vec{\theta}) \text{ w.r.t.\ }L_n(\vec{B})
\\
\text{ can be written as } \exp(F(L_n)),
\end{gathered}
\end{equation}
where $F$ is a suitably chosen bounded continuous function of the empirical distribution $L_{n}(\vec{B})$. Sanov's Theorem and Varadhan's Lemma allow them to conclude from~\eqref{eq:RN} that the empirical measure in~\eqref{eq:empiricalm} satisfies a large deviation principle with an explicit rate function.

To the best of our knowledge, our work is the first to pursue large deviation results for the case when the sequence of interaction graphs are locally tree-like. We show in Theorem~\ref{thm:intdiff} that the strategy in~\eqref{eq:RN} works if one manages to replace the use of Sanov's Theorem by a large deviation principle for the following ``generalized'' empirical measure
\begin{equation}\label{eq:empiricalmGn} 
\frac{1}{n}\sum_{i=1}^n \delta_{\{ (B_i,B_j,B_k) : j,k \sim_{n} i \}}.
\end{equation}

Observe that, even though the collection $(B_i)_i$ is i.i.d., the empirical measure above also takes into account the graph structure of $G_n$, preventing any attempts of applying Sanov's Theorem.

\bigskip

\noindent \textbf{Main results.}
Motivated by the expressions in~\eqref{eq:empiricalmGn}, we here focus on the study of large deviations for empirical distributions of random marked graphs.

In order to state our results, let us first fix some notation. Let $\Theta$ and $\Xi$ be two Polish spaces and consider two fixed distributions on $\nu$ and $\chi$ on $\Theta$ and $\Xi \times \Xi$, respectively. We construct a random marked graph by first sampling a random graph $G_{n}$ distributed according to a sparse Erd\H{o}s R\'enyi random graph $G \big(n, \tfrac{d}{n} \big)$ or as a uniformly chosen graph with $n$ vertices and $m$ edges. Afterwards independently attach to each vertex and edge a ``mark'' with distribution $\nu$ and $\chi$, respectively. Besides, since each edge receives two marks in $\Xi$, we also randomly associate each of these marks with an orientation of the edge, that is, we choose one orientation to receive the first mark and the opposite orientation receives the other mark. This marked random graph will be denoted by $\bar{G}_{n}$.

For each $i \in [n]$, denote by $\bar{G}_{n}(i)$ the connected component of the marked graph $\bar{G}_{n}$ that contains the vertex $i$ and by $[\bar{G}_{n}(i),i]$ the marked graph $\bar{G}_{n}(i)$ rooted at $i$. We consider the empirical distribution
\begin{equation}\label{eq:empirical_distribution_ER}
U(\bar{G}_{n})=\frac{1}{n}\sum_{i=1}^n\delta_{[\bar{G}_{n}(i),i]},
\end{equation}
which is a (random) probability distribution on the set of rooted connected marked graphs (up to isomorphism) $\bar{\mathcal{G}}_{*}$.

Our first result states the large deviation principle for the uniform marked graph with i.i.d.\ marks.
\begin{theorem}\label{t:ldp_uniformg_general}
  Consider two Polish spaces $\Theta$ and $\Xi$.  
  Let $\nu \in \mathcal{P}(\Theta)$, $\chi\in\mathcal{P}(\Xi \times \Xi)$, and consider $G_n$ the marked graph with given number of edges and i.i.d.\ marks (distributed according to $\nu$ and $\chi$) and underlying graph uniformly sampled from $\mathcal{G}_{n,m_n}$ (the set of graphs with $n$ vertices and $m_{n}$ edges) with $\tfrac{m_{n}}{n} \to \tfrac{d}{2}$.
  The sequence of empirical neighborhood measures $U(G_n)$ satisfies a large deviation principle with good rate function $\bar{I}^{u}_d(\mu)$ given via~\eqref{eq:liminfliminf}.
\end{theorem}

When one considers the case of the sparse Erd\H{o}s R\'enyi random graph, from the point of view of local weak convergence, the marked random graph $\bar{G}_{n}$ converges to the Galton-Watson tree with degree distribution $\poisson(d)$ and i.i.d.\ marks on vertices and edges with respective distributions $\nu$ and $\chi$. Our main result concerns the large deviations behavior around this convergence, that can be expressed via the empirical distributions above.
\begin{theorem}\label{t:ldp_erg_general}
Fix $\nu \in \mathcal{P}(\Theta)$ and $\chi\in\mathcal{P}(\Xi\times\Xi)$. Let $G_n$ be the marked graph with i.i.d. discrete marks (with distributions $\nu$ and $\chi)$ and underlying graph the Erd\H{o}s-R\'enyi random graph $G(n,p)$ with $p=\tfrac{d}{n}$.
The sequence of empirical neighborhood measures $U(G_n)$ satisfies a large deviation principle with a good rate function $\bar{I}_d^{\rm ER}$.
\end{theorem}

We are also able to obtain expressions for the rate functions. If the spaces $\Theta$ and $\Xi$ are finite, we prove via mixture techniques that the function $I$ can be explicitly written in terms of the BC-entropy introduced in Delgosha and Anantharam~\cite{da}. If on the other hand $\Theta$ and $\Xi$ are general Polish metric spaces, suitable discretizations of these spaces are used in order to obtain expressions for the rate function.

The empirical measure~\eqref{eq:empirical_distribution_ER} (which contains more information than~\eqref{eq:empiricalmGn})  is used to prove a large deviation profile for the stochastic Kuramoto model in the sparse regime via Theorem~\ref{t:ldp_erg_general} and Varadhan's Lemma. See Theorem~\ref{thm:intdiff}.

A natural question that arises from this result is if the phase transition for the existence of the giant component can be recovered from the rate function. Just like in~\cite{bc}, this is not the case, due to the fact that the topology we consider here only gives local information about the graphs. 

\begin{remark}
We characterize one minimizer of the rate functions $\bar{I}^{u}_d(\mu)$ and $\bar{I}_{d}^{\rm ER}$ in Proposition~\ref{prop:minimizer_general}.
\end{remark}

\bigskip

\subsection{Proof overview}
\noindent \textbf{BC-entropy.} Assume for a moment that the graph does not have any marks. Denote by $\mathcal{G}_{n,m}$ the space of graphs with $n$ vertices and $m$ edges. Later on we allow $m$ to depend on $n$ and thus omit it from the notation. Let $G_n$ be uniformly sampled from $\mathcal{G}_{n,m}$. For each vertex $i\in [n]$ let $[G_n(i),i]$ be its connected component rooted at $i$.  The empirical neighborhood distribution of $G_n$ is the random measure
\begin{equation}\label{eq:endg}
U(G_n)=\frac{1}{n}\sum_{i=1}^n\delta_{[G_n(i),i]},
\end{equation}
taking values on the space of probability measures defined on the space $\mathcal{G}_*$ of rooted connected graphs (up to isomorphism). In this context, Bordenave and Caputo~\cite{bc} establish sharp asymptotic exponential estimates for probabilities of the form
\begin{equation}\label{eq:prob=frac}
  \P\big[U(G_n)\in B(\rho,\delta)\big]=\frac{|\{G\in \mathcal{G}_n\,:\, U(G)\in B(\rho,\delta)\}|}{|\mathcal{G}_n|},
\end{equation}
where $\rho$ is a probability measure over rooted graphs, $\delta$ is any positive number, and $B(\rho,\delta)$ is the ball in the (metrizable) weak topology, thus obtaining a large deviation principle for such empirical measures.

The BC-entropy, introduced by Delgosha and Anantharam~\cite{da}, generalizes and plays a role similar to the rate function in the large deviation principle established in~\cite{bc}, but in the context of marked graphs. In this setting, through a reconstruction result~\cite[Proposition~8]{da}, the authors are able to use a generalized configuration model to understand asymptotics of~\eqref{eq:prob=frac} for marked graphs.

In order to have a rough statement of the results in~\cite{da}, fix two finite spaces $\Theta$ and $\Xi$, which represent the mark spaces for vertices and oriented edges, respectively. Let $G$ be a marked graph with $n$ vertices and $m$ edges and define, for each $\theta \in \Theta$, $u_{G}(\theta)$ as the number of vertices of $G$ with mark $\theta$. Analogously, define $m_{G}(x,x')$ as the number of edges that receive marks $x,x' \in \Xi$, regardless of orientation.

Fix sequences of vectors $\vec{u}_n=(u_n(\theta);\,\theta \in \Theta)$ and $\vec{m}_{n}= (m_n(x,x');\,x,x' \in \Xi)$. Assume that $\vec{u}_n/n$ converges to a probability measure $Q$ on $\Theta$ and $\vec{m}_{n}/n$ converges to a vector of non-negative mean degrees $\vec{d}=(d_{x,x'}; x,x' \in \Xi)$. Let $\mathcal{G}_{\vec{u}_{n}, \vec{m}_{n}}^{\rm da}$ denote the collection of marked graphs $G$ with $n$ vertices and such that $\vec{u}_{G} = \vec{u}_{n}$ and $\vec{m}_{G} = \vec{m}_{n}$, and consider a random graph $G^{\rm da}_{n}$ uniformly sampled from $\mathcal{G}_{\vec{u}_{n}, \vec{m}_{n}}^{\rm da}$.

The following theorem, proved in~\cite{da}, is the starting point of our results.
\begin{theorem}[\cite{da}, see Theorem~\ref{t:equality_entropy} for a complete statement]\label{t:A}
The sequence $U(G^{\rm da}_{n})$ satisfies a weak large deviation principle with a rate function that depends only on the pair $(\vec{d},Q)$.
\end{theorem}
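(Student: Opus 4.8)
The plan is to reduce \eqref{eq:prob=frac} to a purely enumerative statement and to port the unmarked blueprint of Bordenave--Caputo to a marked configuration model. Writing
\[
 \P\big[U(G^{\rm da}_n)\in B(\rho,\delta)\big]
  =\frac{\big|\{G\in\mathcal{G}^{\rm da}_{\vec{u}_n,\vec{m}_n}:\ U(G)\in B(\rho,\delta)\}\big|}{\big|\mathcal{G}^{\rm da}_{\vec{u}_n,\vec{m}_n}\big|},
\]
I would first compute $\tfrac1n\log\big|\mathcal{G}^{\rm da}_{\vec{u}_n,\vec{m}_n}\big|$: up to $e^{o(n)}$ this is the number of compatible half-edge pairings consistent with the prescribed vertex-mark and edge-mark budgets, and in the sparse regime the passage from multigraphs to simple graphs costs only a constant factor (the simplicity probability of the sparse marked configuration model is bounded away from $0$), so the limit exists and is a function $c(\vec{d},Q)$ of $(\vec{d},Q)$ alone. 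Since the theorem asserts only a \emph{weak} LDP, it then suffices, by the standard criterion for weak large deviation principles, to show that for every probability measure $\rho$ on $\bar{\mathcal{G}}_{*}$ the ball limit $-I(\rho):=\lim_{\delta\downarrow0}\lim_{n\to\infty}\tfrac1n\log\P[U(G^{\rm da}_n)\in B(\rho,\delta)]$ exists; equivalently, to produce matching upper and lower bounds on $\tfrac1n\log\big|\{G:\ U(G)\in B(\rho,\delta)\}\big|$ as $\delta\downarrow0$.

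For the upper bound I would use that $U(G)\in B(\rho,\delta)$ pins, for each fixed radius $r$, the empirical distribution of isomorphism classes of marked $r$-balls to within $o_\delta(1)$ of the depth-$r$ marginal $\rho_r$ --- in particular it fixes the marked-degree profile --- and then count graphs with a prescribed depth-$r$ profile by an entropy estimate (choose the colored half-edges at each vertex consistently with the profile, then pair them): this yields $\exp\!\big(n\Sigma_r(\rho)+o_\delta(n)+o(n)\big)$ for an explicit functional $\Sigma_r$ nonincreasing in $r$, hence $\limsup$ bounded by $\Sigma(\rho):=\inf_r\Sigma_r(\rho)$, which equals $-\infty$ unless $\rho$ is unimodular with vertex-mark law $Q$ and edge-mark intensities $\vec{d}$. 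For the lower bound I would plant: using the reconstruction result~\cite[Proposition~8]{da}, fix a marked-degree sequence realizing $\rho_r$ (adjusting $O(1)$ marks or edges, which does not move $U$ on the relevant scale), run the marked configuration model with that sequence, and invoke its local weak convergence to the Galton--Watson-type unimodular limit determined by $\rho_r$ to conclude that, with probability tending to $1$, the resulting graph has $U$ within $\delta$ of $\rho$; since the configuration model is uniform over compatible pairings, this lower-bounds the count by $e^{\Sigma_r(\rho)n+o(n)}$ times the (bounded below) simplicity probability. Letting $\delta\downarrow0$ and then $r\to\infty$ closes the gap to $\Sigma(\rho)$.

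Combining the two bounds, the ball limit exists with $-I(\rho)=\Sigma(\rho)-c(\vec{d},Q)$, so $I(\rho)=c(\vec{d},Q)-\Sigma(\rho)$ depends on $(\vec{u}_n,\vec{m}_n)$ only through $(\vec{d},Q)$, and the weak LDP with rate function $I$ follows. The main obstacle is the lower bound: one must (i) exhibit a marked-degree sequence whose configuration model localizes at the full measure $\rho$ rather than merely at a finite-depth truncation, which is precisely where the consistency and unimodularity bookkeeping of~\cite[Proposition~8]{da} is needed, and (ii) control the number of ``good'' pairings sharply enough to match the upper bound --- a second-moment-type estimate, made more delicate by the requirement that $\vec{u}_G=\vec{u}_n$ and $\vec{m}_G=\vec{m}_n$ hold exactly rather than approximately. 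The upper-bound/entropy side is comparatively routine.
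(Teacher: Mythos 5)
First, a structural point: the paper does not actually prove this statement. Theorem~\ref{t:A} is an informal restatement of results of Delgosha and Anantharam, and the paper's treatment consists of citing \cite[Theorem~2]{da} (reproduced as Theorem~\ref{t:equality_entropy}), the count \eqref{eq:sizeGmu}, and \cite[Theorem 4.1.11]{dembo_zeitouni} to pass from the existence of ball limits to a weak LDP. Your final reduction (``it suffices to show the ball limit exists for every $\rho$'') is exactly that last step, so you and the paper agree there. Everything before it is an attempt to reprove the Delgosha--Anantharam theorem itself via a marked configuration model, which the paper deliberately treats as a black box.

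Within that attempt there is a concrete error. You assert that $\tfrac1n\log\big|\mathcal{G}^{\rm da}_{\vec u_n,\vec m_n}\big|$ converges to a finite constant $c(\vec d,Q)$. It does not: by \eqref{eq:sizeGmu},
\[
\log \big|\mathcal{G}^{(n)}_{\vec m,\vec u}\big|=\big\|\vec m^{(n)}\big\|_{1}\log n+nH(Q)+n\,s(\vec d\,)+o(n),
\]
and since $\|\vec m^{(n)}\|_{1}\sim \tfrac d2 n$, the normalized logarithm diverges like $\tfrac d2\log n$. (Concretely, pairing order-$n$ many half-edges contributes a factor $(2m-1)!!=n^{(1+o(1))m}$, which is super-exponential in $n$.) The same divergence appears in the numerator of \eqref{eq:prob=frac}, so the ratio is well-behaved, but as defined your functionals $\Sigma_r(\rho)$ and $c(\vec d,Q)$ are both $+\infty$ and the identity $I(\rho)=c(\vec d,Q)-\Sigma(\rho)$ is of the form $\infty-\infty$. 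This is precisely why the BC entropy \eqref{eq:def_entropy} subtracts $\|\vec m^{(n)}\|_{1}\log n$ before dividing by $n$; any correct version of your argument must carry this normalization through both the numerator and denominator counts. Beyond that, the actual substance of \cite[Theorem~2]{da} --- that the $\limsup$ and $\liminf$ ball limits coincide and, crucially, do not depend on the adapted sequence $(\vec m^{(n)},\vec u^{(n)})$ --- is exactly the part you defer to ``a second-moment-type estimate'' and to \cite[Proposition~8]{da}. As written this is an outline of the known Bordenave--Caputo strategy rather than a proof, and the sequence-independence, which is what the theorem being reviewed actually asserts about the rate function, is nowhere established.
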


From the result above, we can first conclude Theorem~\ref{t:ldp_uniformg_general} for the case when $\Theta$ and $\Xi$ are discrete spaces. This is obtained by constructing these marked graphs as mixtures of $G^{\rm da}_{n}$ when the vectors $\vec{u}_n$ and $\vec{m}_{n}$ are chosen with appropriate distributions.

With this result we handle the case when the random graph is distributed according to the sparse Erd\H{o}s-R\'enyi random graph $G \big(n, \tfrac{d}{n} \big)$. This follows from the mixture structure of the distribution of the Erd\H{o}s-R\'enyi random graph. This is the idea of the proof of the large deviations result for the case when the spaces $\Theta$ and $\Xi$ are finite. 

\bigskip

\noindent \textbf{The case of general Polish spaces.} We now drop the assumption that the spaces $\Theta$ and $\Xi$ are finite. To go from discrete to a general Polish space we rely on suitable discretizations of the spaces of marks. The general large deviation result is deduced from ``approximate'' large deviation estimates.

To illustrate the above, let $\bar{G}_n$ be a random marked graph. Consider marked graphs $\bar{G}_{n}^{\pi}$ whose marks are obtained by applying certain projection maps $\pi: \Theta \to \tilde{\Theta}$ and $\tilde{\pi}: \Xi \to \tilde{\Xi}$ to the marks of the graph $\bar{G}_{n}$. Assume that we already know that the empirical neighborhood measure of $\bar{G}_{n}^{\pi}$ satisfies a large deviation principle if the spaces $\tilde{\Theta}$ and $\tilde{\Xi}$ are finite. By choosing the sequence of suitable discretizations $(\pi_{k}, \Theta^{k})_{k \in \N}$ and $(\tilde{\pi}_{k}, \Xi^{k})_{k \in \N}$, we conclude that the empirical neighborhood measure of $\bar{G}_{n}$ satisfies a large deviation principle as well. The main technical contribution of the current paper is the following lemma, used to prove that marked random graphs can be well approximated by graphs with marks in finite spaces.

In what follows, we denote by $[G,v]_{h}$ for the ball of radius $h$ centered at $v \in G$ rooted at $v$. See Subsection~\ref{subsec:UGW} for more details.
\begin{lemma}\label{lemma:distace_estimate}
  Assume that $\Theta$ and $\Xi$ are Polish spaces, and consider a pair $(\mathcal{A}, \mathcal{B})$ of $(\epsilon, \delta)$-good tagged partitions \footnote{See Section~\ref{sec:partproject} for the definition}  of $\Theta$ and $\Xi$ with respect to the measures $\nu$ and $\chi$. Let $G$ be any graph with marks in the spaces $\Theta$ and $\Xi$ and let $G^\pi$ be the respective marked graph with projected marks.
  Then, there exists sets $A^{*} \subset \Theta$ and $B^{*} \in \Xi \times \Xi$ such that, for any given parameters $h\geq 1$ and $S \geq 1$,
\begin{equation}\label{eq:bound_BL}
\begin{split}
\d_{BL}\left(U(G),U(G^{\pi})\right) \leq & \frac{1}{1+h} + \epsilon + \frac{2}{n}\sum_{v\in V(G)}\mathbf{1}_{\{|[G,v]_{2h}|>S\}}\\
& + 2S\bigg(\frac{1}{n}\sum_{v \in V(G)}\mathbf{1}_{\{\tau_{v} \in A^{*}\}} + \frac{1}{n}\sum_{e\in E(G)}\mathbf{1}_{\{\xi_{e} \in B^{*}\}}\bigg),
\end{split}
\end{equation}
where $\d_{BL}$ denotes the bounded Lipschitz distance (see~\eqref{eq:BL_norm}).
\end{lemma}

\begin{remark}
With the notation of Section~\eqref{sec:partproject}, we can take $A^{*}= A_{\ell}$ and $B^{*} = ( B_{\ell'} \times \Xi ) \cup ( \Xi \times B_{\ell'} )$.
\end{remark}

In fact, we prove that, provided these discretizations are constructed in a particular way, the empirical measures of the model with marks in $(\Theta, \Xi)$ and its discrete counterpart are exponentially close in the sense of Lemma~\ref{lema:Kurtz}.

If the sets $(\Theta, \Xi)$ are compact metric spaces, we discretize them by simply taking finite coverings by balls with small radius, and approximate marks by the center of the ball to which they belong.
As we shall see, this produces marked random graphs whose empirical distributions are close according to the bounded Lipschitz metric.

If we assume that the spaces $(\Theta, \Xi)$ are not compact, we first need to restrict our probability measure to compact sets with big mass and then perform a similar analysis. In this case, it is necessary to take into account an extra term when bounding the distance between the empirical distributions, that comes from the possibility that marks fall outside this fixed compact set.

We now give a rough idea of the proof of Lemma~\ref{lemma:distace_estimate}. For fixed $h\in \N$, the distance between $U(G)$ and $U(G^{\pi})$ is bounded by the distance between $U(G)_h$ and $U(G^{\pi})_h$.
  Since the underlying graphs of $G$ and $G^{\pi}$ are equal, one only needs to bound the distance between marks in the same edge or vertex.
  This distance will be small for all vertices and edges whose marks do not fall in $A^{*}$ or $B^{*}$, respectively.
  It remains then to count the number of bad marks we have, i.e., marks belonging to either of the sets $A^{*}$ or $B^{*}$.
  In the empirical measure $U(G)_h$, one bad mark can influence many neighborhoods $[G,v]_h$, and the control on this amount will be obtained by bounding the number of bad marks together with a bound on the number of neighborhoods each edge and vertex can influence.

\bigskip

\subsection{Related works}
Many works deal with unmarked graphs, where large deviations have been studied under different aspects. Chatterjee and Varadhan~\cite{cv} study large deviations for Erd\"{o}s-R\'{e}nyi random graphs in the dense regime when $p$ remains constant. In this case, the large deviation result is framed in the setting of the cut topology introduced by Lov\'{a}sz and Szegedy~\cite{ls}.

For the sparse regime, the framework of local weak convergence introduced by Benjamini and Schramm~\cite{bs} and Aldous and Steele~\cite{as} is more useful, since a strikingly large collection of graph functionals is continuous under this topology. In this sense, studying large deviations for empirical neighborhoods allows one to obtain similar estimates for such functionals. Large deviation estimates for graphs in this regime were first treated by Bordenave and Caputo~\cite{bc}, specifically for the Erd\H{o}s-R\'{e}nyi random graph and configuration model. More recently, Backhausz, Bordenave, and Szegedy~\cite{bbs} consider the case of uniformly sampled $d$-regular random graphs.

When one turns to marked graphs, the picture is more scarce. Based on~\cite{bc}, Delgosha and Anantharam~\cite{da} introduce the entropy notion that is central in our work and deduce a weak large deviation principle for the uniform marked graph with given mark counting vectors (see Section~\ref{sec:da_results}).

Andreis, K\"onig, and Patterson~\cite{ak} derive a large deviation principle for the empirical measure of the sizes of all the connected components of the Erd\"{o}s-R\'{e}nyi in the sparse regime. Andreis, K\"onig, Langhammer, and Patterson~\cite{ak2} prove a large deviation principle for the statistics of the collection of all the connected components when the Erd\"{o}s-R\'{e}nyi is marked and inhomogeneous.

In comparison to our result, similar models are explored in~\cite{ak2} where the authors consider the number of vertices of a specific type (mark) within each connected component of the random graph. Notably, their focus lies on obtaining detailed information about the sizes of connected components and offering alternative proofs for classical results related to the phase transition concerning the existence of a giant component. However, a remarkable difference emerges as their work disregards information about the vertex neighborhoods. In contrast, our research places significance on the edge connections within a ball of finite radius around a uniformly sampled vertex. Our results revolve around the geometry of these edge connections in conjunction with the marks of the rooted ball. It is essential to highlight that our scope is limited to finite-radius balls, resulting in the loss of information about the entire connected component. 	Significantly, our findings do not appear to directly imply the results presented in~\cite{ak2}, nor do we perceive a straightforward implication in the reverse direction. The nature of our exploration, focusing on finite-radius balls and the associated edge connections, diverges from the considerations in~\cite{ak2}. We posit that further investigation is warranted to fully understand and delineate the potential relationships between these distinct yet interconnected works.

\bigskip

Let us mention works regarding interacting diffusions defined on top of  random graphs. Dai Pra and den Hollander~\cite{dpdh} consider gradient dynamics on the complete graph. This was weakened by Baldasso, Pereira, and Reis~\cite{bpr} to the non-gradient case and with the presence of delays.

When the underlying structure is given by the Erd\"{o}s-R\'{e}nyi random graph, Delattre, Giacomin, and Lu\c{c}on~\cite{Delattre2016} compare the system with the respective mean-field case and establish bounds on the distances between the solutions, provided the mean degree diverges with logarithmic speed. In the dense regime, without any assumptions on the velocity of divergence of the mean degree, but with more restricted interactions than in~\cite{Delattre2016}, Oliveira and Reis~\cite{or} obtain large deviation estimates for the solution paths. We refer the reader to~\cite{or} to have a more detailed discussion of works in the dense regime.

 In the sparse setting, Lacker, Ramanan and Wu~\cite{Kavita2019} and Oliveira, Reis and Stolerman~\cite{OliveiraReisStolerman2018} deduce convergence of the solutions to the properly defined model on the Galton-Watson random tree. However, their results do not address large deviations.

MacLaurin~\cite{2016maclaurin} considers an interacting particle system defined through stochastic differential equations in networks that converge in the local weak sense to $\mathbb{Z}^d$, fitting the sparse setting. They obtain large deviation principle by applying a series of transformations to an already known large deviation principle (see \cite[Section~4]{2016maclaurin}). The methods in this paper seem to be suitable only for networks that converge in the local weak sense to $\mathbb{Z}^d$. In particular, the above results do not consider other networks as Erd\"{o}s-R\'{e}nyi or the uniform graph in the sparse regime.

\bigskip

\noindent \textbf{Acknowledgments.} 
RB is supported by Conselho Nacional de Desenvolvimento Científico e Tecnológico CNPq grants {\em Produtividade em Pesquisa} (308018/2022-2), {\em Universal} (402952/2023-5), and the Mathematical Institute of Leiden University for the hospitality during the elaboration of this work.
RIO gratefully acknowledges support from grants {\em Produtividade em Pesquisa} (304475/2019-0 and 305765/2023-0) and {\em Universal} grants (432310/2018-5 and 402952/2023-5) from CNPq, Brazil; an {\em Excellent Science - Marie Skłodowska-Curie Actions} grant from the European Commission
(DOI: 10.3030/101007705); and the following grants from FAPERJ, Rio de Janeiro,
Brazil: {\em Cientista do Nosso Estado} (E-26/200.485/2023) and {\em Edital Inteligência
Artificial} (E-26/290.024/2021).
AP was funded by Fundação de Amparo à Pesquisa do Estado de Alagoas - FAPEAL, Brazil (E:60030.0000002397/2022) and the support of CNPq {\em Universal} grant (402952/2023-5).
GR was partially supported by  a Capes/PNPD fellowship 888887.313738/2019-00 while he was a post doc at Federal University of Bahia (UFBA).
The authors thank IMPA for the hospitality during the elaboration of this work and Charles Bordenave for fruitful discussions.

\section{Notation}\label{app:notation}

\subsection{Graphs and marks}

\par Given a graph $G$, we denote by $V(G)$ and $E(G)$ the vertex and edge sets of $G$, respectively.
  For two given vertices $u,v \in V(G)$, we write $u \sim v$ if there exists an edge between these two vertices.
  The degree of $u$ in $G$, denoted by $\deg_G(u)$, is the number of vertices $v\in G$ such that $v \sim u$.
  For connected graphs, the distance between two given vertices $u$ and $v$ is the quantity $\dist_{G}(u,v)$ defined as the size of a minimal path connecting $u$ and $v$.
  The graph $G'=(V',E')$ is an induced subgraph of $G=(V,E)$ if $V' \subset V$ and $\{u,v\} \in E$ if, and only if, $\{u,v\} \in E'$, for all $u,v \in V'$.

  A rooted graph $(G,o)$ is a graph with a distinguished vertex $o \in V(G)$.
  An isomorphism between two rooted graphs $(G,o)$ and $(G',o')$ is a bijection between the vertex sets $V(G)$ and $V(G')$ that preserves the edge sets and the roots.
  For $h \in \N$, denote by $(G,o)_{h}$ as the ball (with respect to the metric $\dist_{G}$) of radius $h$ in $G$ with center at the root $o$.

  We consider only graphs with finite or countably infinite vertex sets.
  In the latter case, we assume that it is locally finite, meaning that each vertex has finite degree.
  We also assume all graphs are simple: no loops and no multiple edges are allowed.


  Roughly speaking, a marked graph is a graph $G=(V,E)$ with marks associated to the vertices and oriented edges of $G$.
  We denote by $\vec{E}$ the collection of oriented edges of $G$
\begin{equation}
\vec{E} = \big\{ \vec{e}=(u,v): \{u, v\} \in E \big\}.
\end{equation}
  Let $(\Theta,{\rm d}_{\Theta})$ and $(\Xi,{\rm d}_{\Xi})$ be two Polish metric spaces.
  A marked graph $\bar{G}=(G,\vec{\tau},\vec{\xi})$ is a graph $G=(V,E)$ together with the fields of marks
\begin{equation}\label{eq:fields-mark}
\vec{\tau}=(\tau(v))_{v\in V} \quad \text{and} \quad \vec{\xi}=(\xi(\vec{e} \, ))_{\vec{e} \in \vec{E}},
\end{equation}
where $\tau(v) \in \Theta$, for all $v \in V$, and $\xi(\vec{e} \, ) \in \Xi$, for all $\vec{e} \in \vec{E}$.
  We denote by $\bar{\mathcal{G}}=\bar{\mathcal{G}}_{(\Theta,\Xi)}$ the space of connected and locally finite marked graphs with mark spaces $\Theta$ and $\Xi$.

  Given a marked graph $\bar{G}=(G,\vec{\tau},\vec{\xi} \,)$, we say that $\bar{G}'=(G',\vec{\tau}',\vec{\xi}' \, )$ is induced by $G'=(V',E')$ if $G'$ is an induced subgraph of $G$, the field $\vec{\tau}'$ is the restriction of $\vec{\tau}$ to $V'$, and $\vec{\xi}'$ is the restriction of $\vec{\xi}$ to $E'$.

  When we write a graph property for a marked graph $\bar{G}=(G,\vec{\tau},\vec{\xi})$ it is implicitly assumed that this property holds for the underlying graph.
  For example, $V(\bar{G}):=V(G)$, $E(\bar{G}):=E(G)$, and, for a vertex $v \in V(\bar{G})$, $\deg_{\bar{G}}(v)=\deg_G(v)$.
  A rooted marked graph $(\bar{G},o)$ is a marked graph $\bar{G}$ with a distinguished vertex $o$.
  We might sometimes also denote a marked graph by $\bar{G}=(V, E, \vec{\tau}, \vec{\xi} \,)$, where $(V,E)$ denotes the underlying graph.

\subsection{Isomorphisms}\label{sec:iso}

\par An isomorphism between two rooted marked graphs $(\bar{G},o)=(G, \vec{\tau},\vec{\xi}, o)$ and $(\bar{G}',o)=(G', \vec{\tau}',\vec{\xi}', o')$ is a bijection $\Psi: V \to V'$ that preserves edges, marks, and the root. In particular, an isomorphism between two rooted marked graphs induces an isomorphism between the underlying rooted graphs $(G,o)$ and $(G',o')$.

  Recall that, for a graph $G$ and a radius $r\in \N$, $(G,o)_{r}$ denotes the subgraph of $G$ induced by the vertices within distance $r$ from $o \in V(G)$. Analogously, given a rooted marked graph $(\bar{G},o)$, we denote by $(\bar{G},o)_r$ the marked graph induced by $(G,o)_r$ rooted at the vertex $o$.

\bigskip

  Let $\bar{\mathcal{G}}_{*}$ denote the space of (connected and locally finite) rooted marked graphs with mark spaces $(\Theta, \Xi)$ up to isomorphism.
  We often identify classes with representative elements without further mention to this.
  Furthermore, we write $\bar{\mathcal{T}}_* \subset \bar{\mathcal{G}}_{*}$ for the space of (connected and locally finite) rooted marked trees up to isomorphism.
  Also, let $\bar{\mathcal{G}}^{h}_{*}$ denote the set of rooted marked connected and locally finite graphs with depth at most $h$, meaning that all vertices are within distance $h$ from the root.
  We write $\mathcal{G}_{*}$ for the space of connected locally finite rooted graphs (without marks).
  There exists a natural projection $\pi_g: \bar{\mathcal{G}}_{*} \to \mathcal{G}_{*}$ that associates to each marked graph $\bar G$ its graph component $\pi_g(\bar G)$.

  Finally, let $\mathcal{G}_n$ denote the set of graphs and $\bar{\mathcal{G}_n}$ the set of marked graphs with vertex set $[n]$ (notice that these sets include graphs that are not connected).

\subsection{Local weak convergence}\label{sec:lwc}

\par In order to define local weak convergence of graphs, we first introduce a metric structure on the space $\bar{\mathcal{G}}_{*}$.
  This is not the exact same notion as in~\cite{bordenave}, but it is equivalent to it, as a simple calculation shows.

  Consider two rooted marked graphs $(\bar{G},o)=(G,\vec{\tau},\vec{\xi},o)$ and $(\bar{G}',o')=(G',\vec{\tau}',\vec{\xi}',o')$ belonging to $\bar{\mathcal{G}}_*$.
  Given $r\in \N$ and $\delta>0$, we say that the pair $(r,\delta)$ is good for $((\bar{G},o),(\bar{G}',o'))$ if there exists an isomorphism $\Psi$ between the two underlying graphs of $(\bar{G},o)_r$ and $(\bar{G}',o')_r$ such that the corresponding marks are at most $\delta$ apart: 
\begin{enumerate}
\item ${\rm d}_\Theta(\tau_v,\tau'_{\Psi(v)})< \delta$, for all $\ v \in (\bar{G},o)_r$,
\item ${\rm d}_\Xi\left(\xi_{(u,v)},\xi'_{(\Psi(u),\Psi(v))}\right)< \delta$, for all $\{u,v\}\in (\bar{G},o)_r$.
\end{enumerate}
  The distance between $[\bar{G},o]$ and $[\bar{G}',o']$ is defined by
\begin{equation}\label{def:distN}
\dist_{\bar{\mathcal{G}}_*}([\bar{G},o],[\bar{G}',o']) = \inf\left\{\frac{1}{1+r} + \delta\,:\,(r,\delta)\mbox{ is good for } ((\bar{G},o),(\bar{G}',o')) \right\}.
\end{equation}

  One can verify that $(\bar{\mathcal{G}}_*,{\rm d}_{\bar{\mathcal{G}}_*})$ is a Polish space (cf.~\cite{bordenave}).

\bigskip

  For a (not necessarily connected) marked graph $\bar{G}$ and a vertex $v \in V$, we associate the rooted marked graph $(\bar{G}(v),v)$ given by the marked graph induced by the connected component of $v$ rooted at $v$.
  We write $[\bar{G}(v),v] \in \bar{\mathcal{G}}_{*}$ for the equivalence class of the rooted connected marked graph $(\bar{G}(v),v)$.

  For a finite marked graph $\bar{G}$ with vertex set $V$, we define the \emph{empirical neighborhood distribution} as
\begin{equation}\label{eq:empirical_neighborhood_distribution}
U(\bar{G})=\frac{1}{|V|}\sum_{v\in V}\delta_{[\bar{G}(v),v]}.
\end{equation}
  Notice that, even though we do not require $\bar{G}$ to be connected, $U(\bar{G})$ is a distribution in $\bar{\mathcal{G}}_{*}$.

  Denote by $\mathcal{P}\left(\bar{\mathcal{G}}_{*}\right)$ the collection of probability measures on $\bar{\mathcal{G}}_{*}$ endowed with the weak convergence topology.
  This space can be metrized by the L\'{e}vy-Prokhorov metric $\d_{LP}$, defined as follows.
  Denote by $\mathcal{B}(\bar{\mathcal{G}}_{*})$ the Borel $\sigma$-algebra of the metric space $(\bar{\mathcal{G}}_*,{\rm d}_{\bar{\mathcal{G}}_*})$.
  For each $A \in \mathcal{B}(\bar{\mathcal{G}}_{*})$ let 
\begin{equation*}
A^{\varepsilon} = \{ g \in \bar{\mathcal{G}}_{*}: {\d}_{\bar{\mathcal{G}}_*}(g,g') <\varepsilon, \text{ for some } g' \in A \}.
\end{equation*}
For $\mu, \nu \in \mathcal{P}\left(\bar{\mathcal{G}}_{*}\right)$, define their L\'{e}vy-Prokhorov distance as
\begin{equation*}
\d_{LP}(\mu ,\nu )= \inf \Big\{
\varepsilon >0:  
  \begin{array}{l}
   \mu (A)\leq \nu (A^{\varepsilon})+\varepsilon \text{ and} \\ 
   \nu (A)\leq \mu (A^{\varepsilon})+\varepsilon, \text{ for all } A \in \mathcal{B}(\bar{\mathcal{G}}_{*})
  \end{array}
\Big\}.
\end{equation*}

Another metric compatible with the weak topology is the bounded Lipschitz metric
\begin{equation}
\d_{\rm BL}(\mu, \nu) = \sup \left \{ \left| \int f \, \d \nu - \int f \, \d \mu \right|: f \in BL(M) \right\},
\end{equation}
where $BL(M)$ is the class of $1$-Lipschitz functions $f : M \to \mathbb{R}$ bounded by one.

For the next lemma, let $x \wedge y$ denote the minimum between $x$ and $y$.
\begin{lemma}
Let $(X, Y)$ be a coupling of two distributions $\mu$ and $\nu$. Then
\begin{equation}
\d_{\rm BL}(\mu,\nu)\leq \mathbb{E}(\d(X,Y) \wedge 2).
\end{equation}
\end{lemma}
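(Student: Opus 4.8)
The claim is that for any coupling $(X,Y)$ of $\mu$ and $\nu$, the bounded Lipschitz distance satisfies $\d_{\rm BL}(\mu,\nu)\leq \mathbb{E}(\d(X,Y)\wedge 2)$. The natural approach is to bound $\left|\int f\,\d\nu-\int f\,\d\mu\right|$ uniformly over $f\in BL(M)$ by expressing both integrals in terms of the coupling. First I would write, using that $(X,Y)$ has marginals $\mu$ and $\nu$,
\begin{equation*}
\int f\,\d\nu-\int f\,\d\mu=\mathbb{E}\big(f(Y)-f(X)\big),
\end{equation*}
so that $\left|\int f\,\d\nu-\int f\,\d\mu\right|\leq \mathbb{E}\big|f(Y)-f(X)\big|$ by Jensen / triangle inequality for expectations. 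Then I would bound the integrand pointwise.

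**Key step: the pointwise bound.** For $f\in BL(M)$, that is, $f$ is $1$-Lipschitz and $|f|\leq 1$, we have two estimates for $|f(Y)-f(X)|$: on one hand, $1$-Lipschitzness gives $|f(Y)-f(X)|\leq \d(X,Y)$; on the other hand, the bound $|f|\leq 1$ gives $|f(Y)-f(X)|\leq |f(Y)|+|f(X)|\leq 2$. Combining, $|f(Y)-f(X)|\leq \d(X,Y)\wedge 2$ pointwise (i.e. for every outcome $\omega$). Taking expectations and then the supremum over $f\in BL(M)$ yields
\begin{equation*}
\d_{\rm BL}(\mu,\nu)=\sup_{f\in BL(M)}\left|\int f\,\d\nu-\int f\,\d\mu\right|\leq \mathbb{E}\big(\d(X,Y)\wedge 2\big),
\end{equation*}
which is exactly the claimed inequality.

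**Obstacle.** There is essentially no obstacle here; this is a routine two-case pointwise bound followed by monotonicity of expectation. The only point to be slightly careful about is measurability — one should note that $\omega\mapsto f(Y(\omega))-f(X(\omega))$ is measurable since $f$ is continuous (being Lipschitz) and $X,Y$ are measurable, so the expectation $\mathbb{E}|f(Y)-f(X)|$ is well-defined, and likewise $\d(X,Y)\wedge 2$ is a bounded measurable function so its expectation exists and is finite. One might also remark that the bound is only informative when $\mathbb{E}(\d(X,Y)\wedge 2)$ is small; since $\d_{\rm BL}$ is itself bounded by $2$, the inequality is trivially true otherwise, but the proof above does not need this observation.
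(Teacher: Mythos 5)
Your proof is correct and is the standard argument; the paper in fact states this lemma without proof, and the routine two-case pointwise bound $|f(Y)-f(X)|\leq \d(X,Y)\wedge 2$ for $f$ $1$-Lipschitz and bounded by one, followed by monotonicity of expectation, is exactly the intended justification.
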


\begin{definition}[Local weak convergence]\label{def:localweak}
Consider a sequence of marked graphs $\bar{G}_n=([n],E_n,\vec{\tau}_n,\vec{\xi}_n)$ and let $\rho \in \mathcal{P}\left(\bar{\mathcal{G}}_{*}\right)$. We say that $\bar{G}_n$ converges locally weakly to $\rho$ if $U(\bar{G}_n)$ converges to $\rho$ in the sense of weak convergence.
\end{definition}

In the remaining of the paper we will drop the bar from $\bar G$ and write $G$ for a marked graph.

\par In the case when $\Theta$ is finite, we define, for $\mu \in \mathcal{P}(\bar{\mathcal{G}}_*)$, and $\theta \in \Theta$,
\begin{equation*}
\Pi_\theta (\mu) := \mu( \tau_G(o) = \theta ) \mbox{ and } \vec{\Pi}(\mu):= (\Pi_\theta(\mu): \theta \in \Theta).
\end{equation*}
Also, for $x,x' \in \Xi$, let $\deg^{x,x'}_G(o)$ denote the number of edges adjacent to the origin with marks $x$ and $x'$. Furthermore, set
\begin{equation}\label{eq:mean-degrees}
\begin{split}
\deg^{x,x'}(\mu) &:= \E_{\mu}(\deg^{x,x'}_G(o)),\\
\quad \deg(\mu) &:= \sum_{x,x'} \deg^{x,x'}(\mu), \text{ and}\\
\vec{\deg}(\mu) &:= (\deg^{x,x'}(\mu): x, x' \in \Xi).
\end{split}
\end{equation}

\subsection{Unimodularity}\label{subsec:unimodularity}

\par Let $\bar{\mathcal{G}}_{**}$ denote the set of connected marked graphs with two distinguished vertices up to isomorphisms that preserve both vertices.
  The local topology of $\bar{\mathcal{G}}_{*}$ extends in a natural way to $\bar{\mathcal{G}}_{**}$.
 
  A measure $\mu \in \mathcal{P}(\bar{\mathcal{G}}_*)$ is called \emph{unimodular} if, for any non-negative measurable function $f: \bar{\mathcal{G}}_{**} \rightarrow \R_+$, we have
\begin{equation}\label{eq:unimodularity}
\int \sum_{v\in V (G)} f([G,o,v]) \, \d\mu([G,o]) =
\int \sum_{v\in V (G)} f([G,v,o]) \, \d\mu([G,o]).
\end{equation}
  We denote the set of unimodular probability measures on $\bar{\mathcal{G}}_*$ by $\mathcal{P}_u (\bar{\mathcal{G}}_*)$.

  We refrain from discussing the properties of unimodular distributions in depth and refer the reader to~\cite{as}.
  A proposition that will be important for us is the following.
\begin{proposition}
The set $\mathcal{P}_u (\bar{\mathcal{G}}_*)$ is closed on $\mathcal{P}(\bar{\mathcal{G}}_*)$.
\end{proposition}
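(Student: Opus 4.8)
The claim is that $\mathcal{P}_u(\bar{\mathcal{G}}_*)$ is closed in $\mathcal{P}(\bar{\mathcal{G}}_*)$ under the weak topology. The natural strategy is: take a sequence $\mu_n \in \mathcal{P}_u(\bar{\mathcal{G}}_*)$ converging weakly to some $\mu \in \mathcal{P}(\bar{\mathcal{G}}_*)$, and verify that $\mu$ satisfies the unimodularity identity~\eqref{eq:unimodularity} for every non-negative measurable $f: \bar{\mathcal{G}}_{**} \to \R_+$. The obvious difficulty is that the functional $[G,o] \mapsto \sum_{v \in V(G)} f([G,o,v])$ appearing on both sides of~\eqref{eq:unimodularity} is neither bounded nor continuous on $\bar{\mathcal{G}}_*$ in general (the sum can be infinite, and even when finite it depends on the whole graph, not just a bounded neighborhood), so one cannot simply pass to the limit by the definition of weak convergence.

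The way around this is a standard truncation/approximation argument. First I would reduce to a convenient dense class of test functions: it suffices to check~\eqref{eq:unimodularity} for $f$ that are bounded, continuous, and \emph{supported on a bounded region}, i.e. $f([G,o,v]) = 0$ whenever $\dist_G(o,v) > r$ and $f$ depends only on $(G,o,v)_r$ (the $r$-neighborhood of the two roots). Indeed, any non-negative measurable $f$ can be approximated from below by such functions: first replace $f$ by $f \wedge M$ (monotone convergence), then by $f \wedge M \cdot \charf{\dist_G(o,v) \le r}$ (again monotone convergence as $r \to \infty$), and finally approximate a bounded function of the bounded-depth rooted-bimarked graph $(G,o,v)_r$ by bounded continuous ones — here one uses that the depth-$r$ truncation map is continuous and that on the space of bounded-depth graphs the marks vary in a Polish space, so bounded continuous functions are dense enough (a monotone-class / Stone–Weierstrass type argument, as is routine in the local-weak-convergence literature, e.g. \cite{as, bordenave}). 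By monotone convergence on both sides of~\eqref{eq:unimodularity}, if the identity holds for all such truncated continuous $f$ it holds for all non-negative measurable $f$.

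For a bounded continuous $f$ supported on $\{\dist_G(o,v) \le r\}$ and bounded by $M$, define the functional
\begin{equation*}
F_f([G,o]) := \sum_{v \in V(G)} f([G,o,v]).
\end{equation*}
The point is that $F_f$ is now a bounded continuous function on $\bar{\mathcal{G}}_*$: only vertices within distance $r$ of $o$ contribute, so the sum has at most $|V((G,o)_r)| \le \Delta^{r+1}$ terms if we could bound degrees — but degrees need not be bounded. To handle this cleanly I would instead fix an additional integer $D$ and set
\begin{equation*}
F_{f,D}([G,o]) := \sum_{v \in V(G)} f([G,o,v]) \, \charf{\deg_G(u) \le D \text{ for all } u \in (G,o)_r}.
\end{equation*}
Then $F_{f,D}$ depends only on $(G,o)_{r+1}$, is bounded by $M D^{r+1}$, and is continuous on $\bar{\mathcal{G}}_*$ (the indicator of a degree bound on a bounded neighborhood is locally constant in the local topology away from a $\mu$-measure-zero set, or one can smooth it; alternatively restrict attention to $f$ already forcing a degree bound via a continuous cutoff). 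Applying~\eqref{eq:unimodularity} for $\mu_n$ to the symmetrized version of $F_{f,D}$ — note the degree cutoff is symmetric in $o,v$ only if phrased on the union of the two $r$-neighborhoods, which is fine — and passing $n \to \infty$ using $\mu_n \Rightarrow \mu$ gives the identity~\eqref{eq:unimodularity} with $F_{f,D}$ in place of $F_f$. Finally let $D \to \infty$: by monotone convergence $F_{f,D} \uparrow F_f$ pointwise, so the identity passes to $F_f$, hence to all non-negative measurable $f$ by the first paragraph. This shows $\mu$ is unimodular, proving closedness.

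**Main obstacle.** The only genuinely delicate point is the lack of a uniform degree bound, which prevents $F_f$ from being continuous or even finite a priori; the fix is the double truncation (bounded depth $r$ \emph{and} bounded degree $D$), checking that the resulting functional is bounded and $\mu$-a.e.\ continuous so that weak convergence applies, and then removing both truncations by monotone convergence. Everything else — the reduction to bounded-depth continuous test functions and the monotone-class density argument — is routine and standard in this setting; I would cite \cite{as, bordenave} rather than reproduce it.
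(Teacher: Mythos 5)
The paper itself offers no proof of this proposition: it is quoted from the local weak convergence literature (the reference given is~\cite{as}; the closedness of the set of unimodular measures is proved there, and in Aldous--Lyons, by precisely the truncation argument you outline). Your sketch is essentially that standard argument and is correct: restrict the mass-transport identity to test functions supported on $\{\dist_G(o,v)\le r\}$ with an additional degree cutoff $D$, note that the resulting functionals of $[G,o]$ are bounded and continuous so the identity survives the weak limit $\mu_n\Rightarrow\mu$, and then remove the truncations in $D$, $r$, $M$ by monotone convergence together with a monotone class argument. Two small points worth tightening. First, the indicator $\charf{\{\deg_G(u)\le D \text{ for all } u\in (G,o)_r\}}$ is determined by the isomorphism class of the underlying graph of $(G,o)_{r+1}$, which is a discrete invariant in the local topology; it is therefore genuinely locally constant on $\bar{\mathcal{G}}_*$, and no smoothing or almost-everywhere caveat is needed. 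Second, after applying unimodularity of $\mu_n$ to the (asymmetric) cutoff function, the right-hand side becomes the integral of $[G,o]\mapsto \sum_{v} f([G,v,o])\,\charf{\{\deg_G(u)\le D \text{ for all } u \in (G,v)_r\}}$, and one must check that this functional is also bounded and continuous; it is, because any contributing $v$ is joined to $o$ by a path of length at most $r$ all of whose vertices have degree at most $D$, so at most $(D+1)^{r+1}$ terms contribute and the functional depends only on $(G,o)_{2r+1}$.
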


\subsection{Marked unimodular Galton-Watson trees}\label{subsec:UGW}

\par In this subsection, we review the concept of marked unimodular Galton-Watson trees introduced in~\cite{da}. We start by recalling the idea of depth-$h$ type of a marked graph, and finish with the definition of admissibility and of marked unimodular Galton-Watson tree.

\bigskip

\noindent{\bf Depth-$h$ type.} The depth $h$-type of marked graph encodes its local structure and is crucial in several results we mention in this work.

For a marked graph $G$ and adjacent vertices $u$ and $v$, define $G(u,v)=(\xi_G (u,v),(G' ,v))$, where $G'$ is the connected component of $v$ in the graph obtained from $G$ by removing the edge between $u$ and $v$. Similarly,
for an integer $h \geq 0$, $G(u,v)_h$ is defined as $(\xi_G (u,v),(G' ,v)_h )$. Let $G[u,v]$ denote the pair $(\xi_G (u,v),[G' ,v]) \in \Xi \times \bar{\mathcal{G}}_*$ and, for $h \geq 0$, let $G[u,v]_h$ denote $(\xi_G (u,v),[G' ,v]_h ) \in \Xi \times \bar{\mathcal{G}}^h_*$.

\par For a marked graph $G$, two adjacent vertices $u,v$ in $G$, and $h \geq 1$, the  \textit{depth-$h$ type} of the edge $(u,v)$ is defined as
\begin{equation}\label{eq:color}
\phi^h_G (u,v) := (G[v,u]_{h-1} ,G[u,v]_{h-1} ) \in (\Xi \times
\mathcal{G}^{h-1}_*
) \times (\Xi \times
\mathcal{G}^{h-1}_*
).
\end{equation}
For a given marked rooted graph $(G,o)$ and $g,g' \in \Xi\times \bar{\mathcal{G}}^{h-1}_*$, we define the quantity
\begin{equation}\label{eq:E_1}
E_h (g,g' )(G,o) := |\{v \sim_G o : \phi^h_G (o,v) = (g,g')\}|.
\end{equation}
Furthermore, for $\mu \in \mathcal{P}(\bar{\mathcal{G}}_{*})$ and $g,g' \in \Xi\times \bar{\mathcal{G}}^{h-1}_*$, set
\begin{equation}\label{eq:P}
e_{\mu} (g,g' ) := \E_{\mu}( E_h (g,g' )(G,o)).
\end{equation}
The first quantity above measures the number of edges adjacent to the root with depth-$h$ type $(g,g')$, which might be thought of as the degree of the depth-$h$ type $(g,g')$. Similarly, the second quantity is the average degree of depth-$h$ type $(g,g')$.

\bigskip

\noindent{\bf Admissibility.} Marked unimodular Galton-Watson trees are defined from a given probability measure $\PP \in \mathcal{P}(\bar{\mathcal{G}}^h_* )$, which we call the {\it seed} of the tree. The notion of admissibility gives us a necessary condition on the seed in order to generate such process, and is related to the unimodularity of the random tree constructed.

\begin{definition}\label{def:admissible}
Let $h \geq 1$. A probability distribution $\PP \in \mathcal{P}(\bar{\mathcal{G}}^h_* )$ is called admissible if $\E_{\PP}( \deg_G (o)) <\infty$ and $e_\PP (g,g' ) = e_\PP (g' ,g)$, for all $g,g' \in \Xi \times \bar{\mathcal{G}}^{h-1}_*$.
\end{definition}

\bigskip

\noindent{\bf Marked unimodular Galton-Watson trees.} Before defining the marked unimodular Galton-Watson trees we need some additional notation. For $g \in \Xi \times \bar{\mathcal{G}}_{*}$, we write $g=(g[m],g[s]),$ where $g[m]\in \Xi$ is the mark component of $g$ and $g[s]\in \bar{\mathcal{G}}_*$ is the subgraph component of $g$. For $t,t' \in \Xi\times \bar{\mathcal{T}}_*$ , define $t \oplus t' \in
\bar{\mathcal{T}}_*$ as the isomorphism class of the rooted tree $(T,o)$, where $o$ has a subtree isomorphic to $t[s]$ and an extra offspring $v$, whose subtree rooted at $v$ and without the edge $\{v, o\}$ is isomorphic to $t' [s]$. Furthermore, $\xi_T (v,o) = t[m]$ and $\xi_T (o,v) = t' [m]$. An illustration of this procedure can be found in~\cite[Figure~3]{da}.

For $h \geq 1$ and an admissible $\PP \in \mathcal{P}(\bar{\mathcal{T}}^h_*)$, we now introduce a collection of auxiliary probability measures in order to define our main object. For $t, t'\in \Xi \times \bar{\mathcal{T}}_*^{h-1}$ such that $e_\PP(t,t')>0$, define $\widehat{\PP}_{t,t'}\in \mathcal{P}(\Xi\times \bar{\mathcal{T}}_*^{h})$ via
\begin{equation*}
\widehat{\PP}_{t,t'}(\,\widetilde{t}\,):=\textbf{ 1 }_{\{\widetilde{t}_{h-1}=t\}}\frac{\PP(\widetilde{t}\oplus t')E_h(t,t')(\widetilde{t}\oplus t')}{e_\PP(t,t')}.
\end{equation*}
In case that $e_P(t,t')=0$, we define $\widehat{\PP}_{t,t'}(\,\widetilde{t}\,)=\textbf{1}_{\{\widetilde{t}=t\}}$. Admissiblity of $\PP$ implies that the distributions above are indeed probability measures (\cite[Section~2]{da}).

We will define the Borel probability measure  UGWT$_h (\PP) \in \mathcal{P}( \bar{\mathcal{T}}_* )$, which is called the marked unimodular Galton-Watson tree with depth-$h$ neighborhood distribution $\PP$. Recall that we also refer to the measure $\PP$ as the seed of UGWT$_h(\PP)$.

We construct $[T,o]$ with law UGWT$_h(\PP)$ recursively. First we sample $(T,o)_h$ according to $\PP$. Then, for each $v \sim_T o,$ we independently sample $\widetilde{t}\in \Xi\times \bar{\mathcal{T}}_*^h$ according to the law $\widehat{\PP}_{t,t'}(\,\cdot\,)$ where $t=T[o,v]_{h-1}$ and $t'=T[v,o]_{h-1}.$ Since $\widetilde{t}_{h-1}=t$, we can add one layer to $T(o,v)_{h-1}$ so that $T(o,v)_h=\widetilde{t}$. From this step, we obtain $(T,o)_{h+1}$. This procedure is then repeated for each vertex at subsequent depths, constructing $(T,o)$. A formal description of this distribution can be found in~\cite{da}.

Next proposition contains a summary of the main properties of UGWT$_h(\PP)$ that we will use throughout the text. For a given distribution $\mu \in \mathcal{P}(\bar{\mathcal{G}}_*)$, we denote by $\mu_h \in \mathcal{P}(\bar{\mathcal{G}}^h_*)$ the law of $[G,o]_h$ when $[G,o] \sim \mu$.

\begin{proposition}[\cite{da}, Section~2] Let $h\geq 1$ and fix $\PP \in \mathcal{P}(\bar{\mathcal{T}}_*^h)$ admissible.
\begin{enumerate}
\item The probability {\rm UGWT}$_h(\PP)\in \mathcal{P}(\bar{\mathcal{T}}_*)$ is well defined. Furthermore, this is a unimodular distribution and $\big({\rm UGWT}_h(\PP)\big)_{h}=\PP$.
\item (Finite-order Markov property) For all $k\geq h$,
\begin{equation*}
{\rm UGWT}_k\big(({\rm UGWT}_h(\PP))_k \big)={\rm UGWT}_h(\PP).
\end{equation*}
\end{enumerate}
\end{proposition}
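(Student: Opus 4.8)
This proposition is part of~\cite[Section~2]{da}; for the purposes of this paper one may simply cite it, but here is the route a self-contained proof would take.

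\emph{Well-definedness and the depth-$h$ marginal.} The first thing I would check is that each $\widehat{\PP}_{t,t'}$ really is a probability measure on $\Xi\times\bar{\mathcal{T}}_*^{h}$, i.e.\ that $\sum_{\widetilde{t}}\PP(\widetilde{t}\oplus t')\,E_h(t,t')(\widetilde{t}\oplus t')=e_\PP(t,t')$, the sum running over $\widetilde{t}$ with $\widetilde{t}_{h-1}=t$. I would obtain this by a counting argument: expand $e_\PP(t,t')=\sum_{S}\PP(S)\,E_h(t,t')(S)$, read each term as a sum over the offspring edges of $S$ at the root of depth-$h$ type $(t,t')$, and observe that deleting the pendant subtree of such an offspring (recording the edge mark) is inverse to the surgery $\widetilde{t}\mapsto\widetilde{t}\oplus t'$, the factor $E_h(t,t')(\widetilde{t}\oplus t')$ accounting for offspring identified under isomorphism. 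Granting this, the recursive description of the construction defines a consistent family of laws on $\bar{\mathcal{T}}_*^{k}$ for all $k\geq h$ — consistent because each step only attaches vertices at strictly larger distance from the root — so Kolmogorov's extension theorem (applicable since $\bar{\mathcal{T}}_*$ is the projective limit of the Polish spaces $\bar{\mathcal{T}}_*^{k}$) yields the Borel probability measure ${\rm UGWT}_h(\PP)$ on $\bar{\mathcal{T}}_*$. The identity $\big({\rm UGWT}_h(\PP)\big)_h=\PP$ is then immediate, since the very first step samples $(T,o)_h$ from $\PP$ and no later step alters $(T,o)_h$.

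\emph{Unimodularity.} By the standard reduction (see~\cite{as}) it suffices to verify~\eqref{eq:unimodularity} for non-negative $f$ that vanish on $[G,o,v]$ unless $o\sim v$ and depend only on the ball $[G,o,v]_R$ for some finite $R$. For such $f$ both sides of~\eqref{eq:unimodularity} are expectations, under ${\rm UGWT}_h(\PP)$, of sums over the offspring edges of the root of a function of their radius-$R$ neighborhoods, and I would compute these directly from the construction. An oriented offspring edge $(o,v)$ of depth-$h$ type $(g,g')$ is produced with total weight $e_\PP(g,g')$ in expectation, and — this is precisely what the kernels $\widehat{\PP}_{\cdot,\cdot}$ are engineered for — the reversed edge $(v,o)$ carries, consistently at every subsequent depth, the subtree with the swapped depth-$h$ type $(g',g)$, so that the joint law of $[G,o,v]_R$ summed over neighbors is invariant under $o\leftrightarrow v$. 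The base case is the admissibility symmetry $e_\PP(g,g')=e_\PP(g',g)$; the induction on $R$ uses that the depth-$h$ marginal is admissible and that the kernels $\widehat{\PP}_{\cdot,\cdot}$ preserve this symmetry. I expect the term-by-term matching of the two families of radius-$R$ neighborhoods to be one of the two main obstacles.

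\emph{The finite-order Markov property.} Fix $k\geq h$ and set $\mu={\rm UGWT}_h(\PP)$, with depth-$k$ law $\mu_k$. First, $\mu_k$ is admissible: $\E_{\mu_k}(\deg_G(o))=\E_{\PP}(\deg_G(o))<\infty$ because the root degree is already determined at depth $1\leq h$, and $e_{\mu_k}(g,g')=e_{\mu_k}(g',g)$ follows by applying the (now available) unimodularity relation to the indicator that an offspring edge of the root has depth-$k$ type $(g,g')$. Hence ${\rm UGWT}_k(\mu_k)$ is well defined, and it remains to show it coincides with $\mu$. I would prove this by showing that $\mu$ itself obeys the recursive description of ${\rm UGWT}_k(\mu_k)$: conditionally on $(T,o)_k$ (whose law is $\mu_k$ by definition), under $\mu$ the tree continues from each depth-$k$ vertex by an independent $\widehat{(\mu_k)}_{\cdot,\cdot}$ step, recursively thereafter. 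In the depth-$h$ construction the same continuation is realized instead by composing the kernels $\widehat{\PP}_{\cdot,\cdot}$ along the branches below depth $k-h+1$, so the content of the statement is the semigroup identity that this composition equals the single depth-$k$ kernel $\widehat{(\mu_k)}_{\cdot,\cdot}$ — in particular that, conditionally on $(T,o)_k$, the continuations at distinct depth-$k$ vertices are independent even though the depth-$h$ construction generates them in correlated bunches. I would establish the identity by expanding $\mu_k=\big({\rm UGWT}_h(\PP)\big)_k$ inside the definition of $\widehat{(\mu_k)}_{s,s'}$ via the construction and checking that the normalizers $E_h/e_\PP$ telescope down to the product of $\widehat{\PP}$-factors. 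This telescoping — which forces one to track how the surgery $\oplus$ interacts with truncation at the several relevant depths, leaning repeatedly on admissibility to keep all $e_\PP$ positive — is the step I expect to be genuinely the hardest.
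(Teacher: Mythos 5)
The paper gives no proof of this proposition --- it is imported verbatim from \cite[Section~2]{da} --- and your proposal likewise defers to that reference, so there is no in-paper argument to compare against; the sketch you append is consistent with the construction carried out in \cite{da}, with the three ingredients you name (the $\oplus$-surgery count normalizing the kernels $\widehat{\PP}_{t,t'}$, involution invariance for unimodularity, and composition/telescoping of the kernels for the finite-order Markov property) being the ones used there. The only point your normalization step glosses is why the multiplicity of a tree $S$ in the sum equals exactly $E_h(t,t')(\widetilde{t}\oplus t')$: in a depth-$h$ tree every child of the root carries a subtree of depth at most $h-1$, so the depth-$h$ type of an offspring edge determines that child's subtree completely, and hence removing any two offspring of the same type $(t,t')$ yields isomorphic complements $\widetilde{t}$ --- without this observation the fiber count could a priori be smaller than $E_h(t,t')(\widetilde{t}\oplus t')$.
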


\subsection{Review of BC entropy and weak large deviation principle}\label{sec:da_results}

\par We now review the needed notation and some of the main results from~\cite{da}, that introduced a cornerstone model in order to consider more general random marked graphs.  Their results are the starting point to the theorems we will prove throughout this work.

\noindent \textbf{Count vectors and degrees.}
In this entire section, we assume that the metric spaces $\Theta$ and $\Xi$ are finite. Whenever these spaces are finite we endow them with an order $\leq$ that we make use from now on.

Let $G$ be a finite marked graph. We define the edge-mark count vector of $G$ by
$\vec{m}_G := (m_G (x,x') \,:\, x,x'\in \Xi)$, where $m_G (x,x')$ is the number of \emph{oriented edges} with associated marks $x$ and $x'$, i.e., number of oriented edges $(v,w)$ such that $\xi_{G}(v,w) = x$ and $\xi_{G}(w,v)=x'$  or $\xi_{G}(v,w) = x'$ and $\xi_{G}(w,v)=x$. Analogously, the vertex-mark count vector of $G$ is $\vec{u}_G := (u_G (\theta) : \theta \in \Theta)$, where $u_G (\theta)$ is the number of vertices $v \in V (G)$ with $\tau_G (v) = \theta$.

\par For a vertex $o \in V (G)$, $\deg^{x,x'}_G(o)$ denotes the number of vertices $v$ connected to $o$ in $G$ such that $\xi_G (v,o) = x$ and $\xi_G (o,v) = x'$. Notice that $\deg_G (o)$ is precisely $\sum_{x,x'\in \Xi} \deg^{x,x'}_G(o)$.

\noindent \textbf{The DA model.}
Let us now introduce the model considered in~\cite{da}, that will also be the main	object considered throughout this section.

\par Fix a sequence of edge-mark count vectors and a sequence of vertex-mark count vectors
\begin{align*}
\vec{u}^{(n)}&=(u^{(n)}(\theta):\theta \in \Theta),\\
\vec{m}^{(n)}&=(m^{(n)}(x,x'):x,x'\in \Xi) \mbox{ with } m^{(n)}(x,x')=m^{(n)}(x',x).
\end{align*}
It will be convenient to consider the restriction of the matrix $\vec{m}^{(n)}$ to the lower diagonal of $\Xi^2$:
\begin{equation}\label{def:lowerd}
\vec{m}_\leq^{(n)}=(m^{(n)}(x,x'):x,x'\in \Xi \mbox{ with } x\leq x').
\end{equation}
We define
\begin{equation}\label{def:norm1}
\big\|\vec m^{(n)} \big\|_{1} := \frac{1}{2}\sum_{x \neq x'} m^{(n)}(x,x')+\sum_{x} m^{(n)}(x,x) = \sum_{x \leq x'}m^{(n)}(x,x'),
\end{equation}
and $\pnorm{1}{\vec u} = \sum_{\theta \in \Theta} u^{(n)}(\theta)$.

\par For each $n\in \N$, define
\begin{equation*}
\mathcal{G}^{(n)}_{\vec m,\vec u}=\{G : V(G)=[n],\,\vec{u}_G=\vec u^{(n)}, \, \vec{m}_G=\vec m^{(n)}\},
\end{equation*}
the collection of graphs with vertex- and edge-mark vectors given by $\vec u^{(n)}$ and $\vec{m}^{(n)}$, respectively.

\par For each $n \in \N$, let $G_n$ be uniformly sampled from $\mathcal{G}^{(n)}_{\vec m,\vec u}$, which we call the DA model in honor to Delgosha and Anantharam. In~\cite{da}, the authors introduce a notion of entropy for such random graphs, which they call BC entropy, and provide a weak large deviation principle for the empirical measure of $G_{n}$, denoted by $U(G_{n})$ (see~\eqref{eq:empirical_neighborhood_distribution}).

\par The asymptotic behavior of these distributions depends on the limits of the vectors $\vec{u}^{(n)}$ and $\vec{m}^{(n)}$. With this in mind, we introduce some additional notation for these limits. The average-degree vector is a vector of non-negative entries $\vec{d} = \big( d_{x,x'}: x, x' \in \Xi \big)$ such that 
\begin{equation*}
d_{x,x'}=d_{x',x}, \text{ for all } x,x' \in \Xi, \quad \text{and} \quad \sum_{x,x'}d_{x,x'}>0.
\end{equation*}
The vector $\vec{d}$ plays the role of the limit of $\vec{m}^{(n)}/n$. Likewise, we introduce a distribution $Q=(q_\theta)_{\theta\in \Theta}$ over $\Theta$ that will be regarded as the limit of $\vec{u}^{(n)}/n$.

\begin{definition}\label{def:adapted}
Given an average-degree vector $\vec d$ and a probability distribution $Q = (q_\theta)_{\theta \in \Theta}$, we say that a sequence $(\vec m^{(n)} ,\vec u^{(n)})$ of edge- and vertex-mark count vectors is \textit{adapted} to $( \vec d,Q)$, if the following conditions hold:
\begin{enumerate}
\item $\pnorm{1}{\vec m^{(n)}}\leq \binom{n}{2}$ and $\pnorm{1}{\vec u^{(n)}}=n$. This guarantees that $\mathcal{G}^{(n)}_{\vec m,\vec u}$ is not empty.
\item $m^{(n)}(x,x)/n\to d_{x,x}/2$, for all $x \in \Xi$.
\item $m^{(n)}(x,x')/n\to d_{x,x'}=d_{x',x}$, for all $x \neq x' \in \Xi$.
\item $u^{(n)}(\theta)/n\to q_\theta$, for all $\theta \in \Theta$.
\item If $d_{x,x'}=0$, then $m^{(n)}(x,x')=0$, for all $n$.
\item $q_\theta=0$ implies $u^{(n)}(\theta)=0$, for all $n$.
\end{enumerate}
\end{definition}

\noindent \textbf{The large deviation principle.}
Let $\mu \in \mathcal{P}(\bar{\mathcal{G}}_*)$ and denote by $B(\mu,\varepsilon)$ the ball of radius $\varepsilon$ in the L\'{e}vy-Prokhorov metric in $\mathcal{P}(\bar{\mathcal{G}}_*)$. The weak large deviation principle concerns the exponential behavior of the sequence of probabilities $\P(U(G_n)\in B(\mu,\varepsilon))$.

\par Consider the set
\begin{equation*}
\mathcal{G}^{(n)}_{\vec m,\vec u}(\mu,\varepsilon):= \left\{ G\in \mathcal{G}^{(n)}_{\vec m,\vec u}\,:\,\d_{LP}(U(G),\mu)<\varepsilon \right\}.
\end{equation*}
Since $G_n$ is uniformly distributed over $\mathcal{G}^{(n)}_{\vec{m},\vec{u}}$, it follows by definition that
\begin{equation*}
\log \P(U(G_n)\in B(\mu,\varepsilon))=\log \big|\mathcal{G}^{(n)}_{\vec m,\vec u}(\mu,\varepsilon)\big|-\log \big|\mathcal{G}^{(n)}_{\vec m,\vec u}\big|.
\end{equation*}

\par If $(\vec m^{(n)} ,\vec u^{(n)})$ is adapted to $(\vec d, Q)$, it is proved in \cite[Equation~10]{da} that 
\begin{equation}\label{eq:sizeGmu}
\log \big|\mathcal{G}^{(n)}_{\vec m,\vec u}\big|=\big\|{\vec m^{(n)}}\big\|_{1}\log n+nH(Q)+n\sum_{x,x'}s(d_{x,x'})+o(n),
\end{equation}
where $s(d)=d/2-(d/2)\log d$, if $d>0$, and $s(d)=0$ if $d=0$, and $H(Q) = -\sum_{\theta}Q_{\theta} \log Q_{\theta}$ denotes the entropy of the probability measure $Q$.

\par Delgosha and Anantharam~\cite{da} introduced the notion of $\varepsilon$-upper BC entropy
\begin{equation}\label{eq:def_entropy}
\overline \Sigma_{\vec d,Q}(\mu,\varepsilon)\vert_{\vec m^{(n)},\vec u^{(n)}}:=\limsup_{n\to\infty}\frac{\log |\mathcal{G}^{(n)}_{\vec m,\vec u}(\mu,\varepsilon)|-\pnorm{1}{\vec m^{(n)}}\log n}{n}.
\end{equation}
Taking the limit $\varepsilon\to 0$, the upper BC entropy can be defined as
\begin{equation*}
\overline \Sigma_{\vec d,Q}(\mu)\vert_{\vec m^{(n)},\vec u^{(n)}}=\lim_{\varepsilon \downarrow 0}\overline \Sigma_{\vec d,Q}(\mu,\varepsilon)\vert_{\vec m^{(n)},\vec u^{(n)}}.
\end{equation*}
Similar definitions hold when we change the superior limits to inferior limits and, in this case, we change the notation, substituting $\overline{\Sigma}$ by $\underline{\Sigma}$.

For an average degree vector $\vec{d}$, we denote by $s(\vec{d}\,)$ the quantity
\begin{equation}\label{def:s}
s(\vec{d}\,)=\sum_{x,x'}s(d_{x,x'}).
\end{equation}

The next result establishes the central piece of the weak large deviation principle. It proves that the upper and lower BC entropies coincide and that they do not depend on the adapted sequence $(\vec{m}^{(n)}, \vec{u}^{(n)})$.
\begin{theorem}[\cite{da}, Theorem~2]\label{t:equality_entropy}
Let $\mu\in \mathcal{P}(\bar{\mathcal{G}}_*)$ with $0<\deg(\mu)<\infty$. The following statements hold.
\begin{enumerate}
\item The values of $\overline \Sigma_{\vec d,Q}(\mu)\vert_{\vec m^{(n)},\vec u^{(n)}}$ and $\underline \Sigma_{\vec d,Q}(\mu)\vert_{\vec m^{(n)},\vec u^{(n)}}$ do not depend on the adapted sequence $(\vec{m}^{(n)},\vec{u}^{(n)})$. For this reason, we simplify the notation by writing $\overline \Sigma_{\vec d,Q}(\mu)$ and $\underline \Sigma_{\vec d,Q}(\mu)$. 
\item The equality $\overline \Sigma_{\vec d,Q}(\mu)=\underline \Sigma_{\vec d,Q}(\mu)$ holds. This value is simply written as $ \Sigma_{\vec d,Q}(\mu)$ and it belongs to $[-\infty,s(\vec d\,)+H(Q)]$.
\end{enumerate}
\end{theorem}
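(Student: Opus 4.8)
The plan is to establish both parts of the theorem simultaneously by producing, for every sequence $(\vec m^{(n)},\vec u^{(n)})$ adapted to $(\vec d,Q)$, a representation of the form
$$
\log\big|\mathcal{G}^{(n)}_{\vec m,\vec u}(\mu,\varepsilon)\big| = \|\vec m^{(n)}\|_{1}\log n + n\,J_{\vec d,Q}(\mu;\varepsilon) + o(n),
$$
in which $J_{\vec d,Q}(\mu;\varepsilon)$ depends only on $\mu$ and the limits $(\vec d,Q)$, and then letting $\varepsilon\downarrow 0$. The first reduction is to pass from the L\'evy--Prokhorov ball $B(\mu,\varepsilon)$ to a constraint on finite-depth neighborhoods: for each fixed radius $h$, closeness of $U(G)$ to $\mu$ in $\d_{LP}$ is controlled by closeness of the depth-$h$ marginals $U(G)_h$ and $\mu_h$ in the space $\bar{\mathcal{G}}^h_*$ of depth-$h$ rooted marked graphs, which becomes finite once one imposes a degree bound $D$; the hypothesis $0<\deg(\mu)<\infty$ is what lets one control the contribution of high-degree vertices as $D\to\infty$. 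It then suffices to estimate, to exponential precision, the number of marked graphs on $[n]$ with count vectors $(\vec m^{(n)},\vec u^{(n)})$ whose depth-$h$ neighborhood profile is $\varepsilon$-close to $\mu_h$, and finally to send $h\to\infty$, $D\to\infty$ and $\varepsilon\downarrow 0$ in the appropriate order.

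The core combinatorial step is to count graphs with a prescribed depth-$h$ type profile $\vec p=(p_t)_t$, where $p_t$ is the number of vertices of depth-$h$ type $t$. This count factorizes, up to subexponential corrections, into a vertex-labelling term and a configuration-model-type half-edge pairing term: Stirling applied to the multinomial $\binom{n}{\vec p}$ yields $n\,H(\vec p/n)+o(n)$, while the number of ways to pair the coloured half-edges into a simple, locally tree-like graph with edge-mark vector $\vec m^{(n)}$ consistent with the chosen local types contributes $\|\vec m^{(n)}\|_{1}\log n+n\,s(\vec d)+o(n)$, exactly as in the count~\eqref{eq:sizeGmu} of the full class (the $-(d/2)\log d$ in $s(d)$ being the Stirling correction for near-perfect matchings on the $\sim n\,d$ half-edges of each colour class). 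Summing over the subexponentially many profiles $\vec p$ compatible with being $\varepsilon$-close to $\mu_h$ and with $(\vec m^{(n)},\vec u^{(n)})$ and applying a Laplace-type argument identifies $J_{\vec d,Q}(\mu;\varepsilon)$. Since every leading-order contribution above is a continuous function of $\vec m^{(n)}/n\to\vec d$ and $\vec u^{(n)}/n\to Q$ (Stirling estimates are uniform over compact parameter ranges and $H$ is continuous on the simplex), the right-hand side depends on the chosen sequence only through $o(n)$; this yields at once the independence of both $\overline\Sigma_{\vec d,Q}(\mu)$ and $\underline\Sigma_{\vec d,Q}(\mu)$ from the adapted sequence and their equality. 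The bound $\Sigma_{\vec d,Q}(\mu)\le s(\vec d)+H(Q)$ is immediate from $\mathcal{G}^{(n)}_{\vec m,\vec u}(\mu,\varepsilon)\subseteq\mathcal{G}^{(n)}_{\vec m,\vec u}$ together with~\eqref{eq:sizeGmu}.

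\textbf{Main obstacle.}
The hard direction is the lower bound matching the heuristic count, i.e.\ showing that the exponential rate is also $\ge$ the variational quantity. The upper (union-bound) direction is comparatively soft: one sums over the subexponentially many truncated depth-$h$ profiles, absorbing the truncation error via $0<\deg(\mu)<\infty$. For the lower bound one must exhibit $e^{-o(n)}$ genuine graphs realizing the target: sample i.i.d.\ depth-$h$ types with law $\mu_h$ (conditioned to match the prescribed count vectors, which costs only $o(n)$ by a local limit theorem), realize them through a configuration-model pairing of half-edges, and prove that with probability $e^{-o(n)}$ the resulting marked graph is simple, has $o(n)$ short cycles, and has $U(G)$ within $\varepsilon$ of $\mu$. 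The delicate point is that a collection of locally prescribed neighborhoods need not be globally consistent, and showing that consistent configurations form an $e^{-o(n)}$ fraction is precisely where the unimodularity of $\mu$ and its being supported on $\bar{\mathcal{T}}_*$ enter, through the admissibility and reconstruction machinery of~\cite[Proposition~8]{da} and the analysis of ${\rm UGWT}_h$. Once this is in place, the matching upper and lower exponential rates give the existence of the limit in~\eqref{eq:def_entropy} and complete the proof.
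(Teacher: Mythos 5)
The paper does not prove this statement at all: Theorem~\ref{t:equality_entropy} is imported verbatim from Delgosha--Anantharam (\cite{da}, Theorem~2), so there is no in-paper proof to compare against, and any assessment must be made against the argument in~\cite{da} (which in turn builds on~\cite{bc}). Your outline does track the broad architecture of that proof: truncation to depth-$h$ neighborhoods with a degree cutoff, counting graphs with a prescribed type profile via a multinomial term plus a configuration-model pairing term, a union bound over profiles for the upper estimate, and a randomized construction for the lower estimate. The observation that $\Sigma_{\vec d,Q}(\mu)\le s(\vec d\,)+H(Q)$ follows from the inclusion $\mathcal{G}^{(n)}_{\vec m,\vec u}(\mu,\varepsilon)\subseteq\mathcal{G}^{(n)}_{\vec m,\vec u}$ together with~\eqref{eq:sizeGmu} is correct and is exactly how that bound is obtained.

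However, as a proof the proposal has a genuine gap precisely where you flag the ``main obstacle.'' The entire content of the theorem lives in the lower bound: you must show that the number of globally consistent realizations of an i.i.d.-sampled depth-$h$ type profile is $e^{\|\vec m^{(n)}\|_1\log n+nJ_h(\mu_h)+o(n)}$ and that a non-vanishing-on-exponential-scale fraction of the resulting pairings are simple with empirical distribution $\varepsilon$-close to $\mu$. You name this step and defer it to ``the admissibility and reconstruction machinery of~\cite[Proposition~8]{da} and the analysis of ${\rm UGWT}_h$,'' but that machinery \emph{is} the proof of the theorem in~\cite{da}; invoking it wholesale means the argument is an outline rather than a derivation. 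Two further points are not addressed. First, the theorem is asserted for every $\mu$ with $0<\deg(\mu)<\infty$, including $\mu$ that is non-unimodular, not tree-supported, or with mismatched degree/mark marginals; there the value is $-\infty$, the lower bound is vacuous, and one needs a separate upper-bound argument (this is the content of Theorem~\ref{theorem1}) showing the count is super-exponentially small --- your ``matching upper and lower rates'' scheme does not produce this. Second, the independence from the adapted sequence (part 1) is not a consequence of continuity of the leading-order terms alone, since two adapted sequences give genuinely different graph classes $\mathcal{G}^{(n)}_{\vec m,\vec u}$; one must compare the classes directly, e.g.\ by modifying $o(n)$ marks and edges and controlling how many neighborhoods this perturbs, in the spirit of Lemma~\ref{lemma:distace_estimate}.
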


The quantity $\Sigma_{\vec d,Q}(\mu)$ is called the BC entropy of $\mu$ associated with the pair $(\vec d,Q)$. From Theorem~\ref{theorem1}, we conclude that unless $\vec{d} = \vec{\deg}(\mu)$, $Q =\vec{\Pi}(\mu)$, and $\mu$ is a unimodular measure on $\bar{\mathcal{T}}_*$, we have $\Sigma_{\vec{d},Q} (\mu) = -\infty$. Additionally, the function $\mu \mapsto \Sigma_{\vec d,Q}(\mu)$ is upper semicontinuous on the set $\mathcal{P}_u(\bar{\mathcal{G}}_*)$ \cite[Lemma~5]{da}.

Combining the theorem above, Equation~\eqref{eq:sizeGmu} and~\cite[Theorem 4.1.11]{dembo_zeitouni}, one readly concludes a weak large deviation principle for the collection $\big(U(G_{n})\big)_{n \in \N}$, with rate function given by
\begin{equation}\label{eq:I_d_Q}
I_{\vec{d},Q}(\mu)= H(Q)+s(\vec{d}\,)-\Sigma_{\vec d,Q} (\mu).
\end{equation}

We now verify that this family actually satisfies a strong large deviation principle.
\begin{theorem}\label{t:ldp}
Assume that the spaces of marks $\Theta$ and $\Xi$ in the definition of $\mathcal{G}^{(n)}_{\vec{m},\vec{u}}$ are finite, and let $G_n$ be sampled uniformly from $\mathcal{G}^{(n)}_{\vec{m},\vec{u}}$ with $(\vec{m}^{(n)},\vec{u}^{(n)})$ adapted to $(\vec d,Q)$. The sequence $\big(U(G_{n})\big)_{n \in \N}$ satisfies a large deviation principle with rate function given by~\eqref{eq:I_d_Q}.
\end{theorem}

\begin{remark}
Conditions 5 and 6 in Definition~\ref{def:adapted} of adapted sequence are unnecessary. We claim that if these conditions are not verified then it is possible to define an exponentially equivalent model for which such conditions hold. Indeed, if either $m^{(n)}(x,x')/n\to 0$ or $u^{(n)}(\theta)/n\to 0$ then we need to modify only $o(n)$ marks on vertices and edges in order to comply with Conditions 5 and 6 in Definition~\ref{def:adapted}. Due to our sparsity assumption these changes only affect $o(n)$ neighborhoods. This argument can be made precise following the steps of the proof of Lemma~\ref{lemma:distace_estimate} below.
\end{remark}

\begin{proof}[Proof of Theorem~\ref{t:ldp}]
It suffices to verify exponential tightness for the family $\big(U(G_{n})\big)_{n \in \N}$. From Lemma~\ref{le:compacts} in Appendix~\ref{app:compacity}, we can construct for any compact set $\Pi \subset\mathcal{G}_{*}$, a corresponding  compact set $\bar{\Pi} \subset \bar{\mathcal{G}}_{*}$ such that $U(\bar{G}) \in \bar{\Pi}$ if, and only if, $U(\pi_g(\bar{G})) \in \Pi$. In particular, exponential tightness of the sequence $U(\bar{G}_n)$ follows directly from the exponential tightness of the sequence $U(\pi_g(\bar{G}_n))$. This is a consequence of~\cite[Lemma~6.2]{bc} since, if $\bar{G}_n$ is uniformly distributed from $\mathcal{G}_{\vec{m},\vec{u}}^{(n)}$, then $G_n=\pi_g(\bar{G}_n)$ is uniformly distributed from $\mathcal{G}_{n,m}$, the set of graphs with $n$ vertices and $m=\|\vec{m}\|_{1}$ edges.
\end{proof}

In Section~\ref{sec:discrete_marks} we combine Theorem~\ref{t:ldp} with mixture techniques from Appendix~\ref{appendix:mixture_and_LDP}. For this reason, it is convenient to parametrize the model not via the pair $(\vec{d},Q)$ like in the DA model, but actually replace $\vec{d}$ with a probability measure. In the following, we show how to recover Theorem~\ref{t:ldp} with the new parameters.

 Motivated by this we fix a parameter $d>0$, $P\in\mathcal{P}(\Xi^2_\leq)$ and $Q\in\mathcal{P}(\Theta)$. We define $\vec{d}=\vec{d}(d,P)$ via: for $x\leq x'$
 \begin{equation}\label{def:Pd}
d_{x,x'}= \begin{cases}
 dP(x,x) &, \text{ if }x=x'\\
 \frac{d}{2}P(x,x') &, \text{ otherwise,}
 \end{cases}
\end{equation}
and $d_{x,x'}=d_{x',x}$ if $x>x'$.

With the above definition, $(\vec{m}^{(n)},\vec{u}^{(n)})$ is adapted (except for conditions 5 and 6) to $(\vec{d},Q)$ if and only if $\|\vec{m}_n\|/n\to d/2$ and $(\vec{m}_\leq^{(n)}/\|\vec{m}_n\|,\vec{u}^{(n)}_n/n)$ converges weakly to $(P,Q)$:
\begin{equation}
\begin{split}
P(x,x)&=\lim_n\frac{m_n(x,x)}{m_n}=\lim_n\frac{m(x,x)}{n}\frac{n}{m_n}=\frac{d_{x,x}}{2}\frac{2}{d}=\frac{d_{x,x}}{d}\\
P(x,x')&=\lim_n\frac{m_n(x,x')}{m_n}=\lim_n\frac{m(x,x')}{n}\frac{n}{m_n}=d_{x,x'}\frac{2}{d}=\frac{2d_{x,x'}}{d}
\end{split}
\end{equation}
This reasoning also leads to the relation
\begin{equation}
d=2\sum_{x<x'}d_{x,x'}+\sum_xd_{x,x}\,.
\end{equation}

From Theorem~\ref{t:ldp} we obtain the following corollary.
\begin{cor}\label{c:ldp}
  Assume that the spaces of marks $\Theta$ and $\Xi$ are finite.
  Fix $d>0$, $P\in\mathcal{P}(\Xi^2_\leq)$, and $Q\in\mathcal{P}(\Theta)$.
  Consider sequences $(\vec{m}_n,\vec{u}_n)$ such that $\|\vec{m}_n\|_1/n \to d/2$ and $(\vec{m}_\leq^{(n)}/\|\vec{m}_n\|_1,\vec{u}^{(n)}_n/n)$ converges weakly to $(P,Q)$.
  If $G_n$ is sampled uniformly from $\mathcal{G}^{(n)}_{\vec{m},\vec{u}}$, then the sequence $\big(U(G_{n})\big)_{n \in \N}$ satisfies a large deviation principle with rate function given by
\begin{equation}
I_{P,Q}(\mu)= H(Q)+s(\vec{d}(P,d)\,)-\Sigma_{\vec d(P,d),Q} (\mu),
\end{equation}
where $\vec{d}=\vec{d}(d,P)$ is defined through~\eqref{def:Pd}.
\end{cor}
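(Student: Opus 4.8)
The plan is to derive Corollary~\ref{c:ldp} directly from Theorem~\ref{t:ldp} via the change of parameters $(\vec d, Q) \leftrightarrow (d, P, Q)$ encoded in~\eqref{def:Pd}. The computation displayed just before the statement already shows that, with $\vec d = \vec d(d,P)$, the two hypotheses $\|\vec m_n\|_1/n \to d/2$ and $(\vec m_\leq^{(n)}/\|\vec m_n\|_1,\, \vec u^{(n)}/n) \to (P,Q)$ are together equivalent to conditions (2)--(4) of Definition~\ref{def:adapted}. Condition (1) holds automatically, since the corollary presupposes that $G_n$ is sampled uniformly from $\mathcal{G}^{(n)}_{\vec m,\vec u}$, which forces $\mathcal{G}^{(n)}_{\vec m,\vec u} \neq \emptyset$, i.e.\ $\|\vec m_n\|_1 \le \binom n2$ and $\|\vec u_n\|_1 = n$. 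Moreover $\vec d(d,P)$ is a legitimate average-degree vector: it is symmetric by construction and $\sum_{x,x'} d_{x,x'} = d > 0$.

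The one point that is not purely formal is that conditions (5) and (6) of Definition~\ref{def:adapted} may fail under the corollary's hypotheses — if $d_{x,x'} = 0$ one only learns $m_n(x,x')/\|\vec m_n\|_1 \to 0$, not that $m_n(x,x') = 0$ for every fixed $n$, and similarly for $u_n(\theta)$ when $q_\theta = 0$. To handle this I would invoke the Remark following Theorem~\ref{t:ldp}: erase the $o(n)$ offending edge- and vertex-marks, producing count vectors $(\vec m_n', \vec u_n')$ that are now adapted to $(\vec d(d,P), Q)$ in the full sense, including (5) and (6). Because of the sparsity assumption each altered mark lies in only $O(1)$ neighborhoods on average, so the modification changes $U(G_n)$ only on an $o(n)$ fraction of roots; hence the original and modified models are exponentially equivalent, and an LDP for one transfers to the other. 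I would keep this step brief by forward-referencing the distance estimate of Lemma~\ref{lemma:distace_estimate}, whose proof already contains exactly the ``$o(n)$ mark changes affect $o(n)$ neighborhoods'' bound in the required quantitative form.

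With $(\vec m_n', \vec u_n')$ fully adapted to $(\vec d(d,P), Q)$, Theorem~\ref{t:ldp} yields a large deviation principle for $U(G_n')$ — and thus for $U(G_n)$, by the exponential equivalence above — with good rate function $H(Q) + s(\vec d(d,P)) - \Sigma_{\vec d(d,P),Q}(\mu)$. Setting $I_{P,Q}(\mu) := H(Q) + s(\vec d(P,d)) - \Sigma_{\vec d(P,d),Q}(\mu)$ reproduces the claimed expression and finishes the proof. In short, the statement is essentially a reparametrization of Theorem~\ref{t:ldp}; the only genuine (and mild) obstacle is the exponential-equivalence reduction needed to dispose of conditions (5) and (6), which I expect to cost no more than a pointer to Lemma~\ref{lemma:distace_estimate}.
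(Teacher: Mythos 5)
Your proposal is correct and follows the paper's own route: the displayed computation preceding the corollary establishes the equivalence between the hypotheses and conditions (2)--(4) of Definition~\ref{def:adapted}, and the failure of conditions (5) and (6) is disposed of exactly as in the Remark after Theorem~\ref{t:ldp}, by an exponential-equivalence argument modifying $o(n)$ marks in the spirit of Lemma~\ref{lemma:distace_estimate}. The only additions you make (checking condition (1) and that $\vec d(d,P)$ is a valid average-degree vector with $\sum_{x,x'}d_{x,x'}=d$) are correct and harmless.
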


\subsection{Partitions and projections}
\label{sec:partproject}
  
Given tagged partitions\footnote{A tagged partition of a set $X$ is a collection $\mathcal{A} = \{ (A_{1}, a_{1}), \dots, (A_{\ell}, a_{\ell}) \}$ such that $\cup_{i=1}^{\ell}A_{i} = X$ and $a_{i} \in A_{i}$, for all $ i \leq \ell$.} $\mathcal{A} = \{ (A_{1}, a_{1}), \dots, (A_{\ell}, a_{\ell}) \}$ of $\Theta$ and $\mathcal{B} = \{ (B_{1}, b_{1}), \dots, (B_{\ell'}, b_{\ell'}) \}$ of $\Xi$, consider the projection maps
\begin{equation}
\pi_{\mathcal{A}}: \Theta \to \{a_{1}, \dots, a_{\ell} \} \text{ and } \pi_{\mathcal{B}}: \Xi \to \{b_{1}, \dots, b_{\ell'} \}.
\end{equation}
  For a graph $\bar{G}=(G,\vec{\tau},\vec{\xi} \,)$ with marks in the spaces $\Theta$ and $\Xi$, we define its projection $\bar{G}^{\pi}=(G,\pi_{\mathcal{A}}(\vec{\tau}),\pi_{\mathcal{B}}(\vec{\xi}) \,)$ where
\begin{equation}
\pi_{\mathcal{A}}(\vec{\tau})=\big(\pi_{\mathcal{A}}\circ\tau(v) \big)_{v\in V} \quad \text{and} \quad \pi_{\mathcal{B}}(\vec{\xi}) = \big(\pi_{\mathcal{B}} \circ \xi(\vec{e} \, ) \big)_{\vec{e} \in \vec{E}}.
\end{equation}
In words, $\bar{G}^{\pi}$ is the marked graph which has the same underlying graph as $\bar{G}$  and whose marks are obtained by projecting the marks of $\bar{G}$ using the maps $\pi_{\mathcal{A}}$ and $\pi_{\mathcal{B}}$. This projection map extends naturally for the set of rooted marked graphs up to isomorphisms $\bar{\mathcal{G}}_*(\Theta, \Xi) \to \bar{\mathcal{G}}_* \big( \pi_{\mathcal{A}}(\Theta), \pi_{\mathcal{B}}(\Xi) \big)$.

For a probability measure $\mu$ on $\Theta$, we denote by $\mu^{\pi_{\mathcal{A}}}$ its pushforward to the space $\{a_{1}, \dots, a_{\ell} \}$. Later on, we will work with probabilities $\chi$ defined in $\Xi \times \Xi$. In this case, the measure $\chi^{\pi_{\mathcal{B}}}$ denotes the pushforward of $\chi$ into the space $\{b_{1}, \dots, b_{\ell'} \} \times \{b_{1}, \dots, b_{\ell'} \}$ via the coordinate map $\pi_{\mathcal{B}} \times \pi_{\mathcal{B}}$.

Fix two measures $\nu\in \mathcal{P}(\Theta)$ and $\chi\in\mathcal{P}(\Xi^2)$ and two positive real numbers $\epsilon,\delta>0$. We say that the pair of partitions $(\mathcal{A},\mathcal{B})$ is an $(\epsilon,\delta)$-good pair with respect to $(\nu,\chi)$ if the following two condition holds:
\begin{enumerate}
\item The diameter of the sets are smaller that $\epsilon$ with exception for the last one:
\begin{equation}
\begin{split}
\diam_\Theta(A_i) & \leq \epsilon, \quad \text{for }  i \leq\ell-1, \text{ and }\\
\diam_{\Xi^2}(B_i \times B_{j}) & \leq \epsilon, \quad \text{for } i, j \leq \ell'-1.
\end{split}
\end{equation}
\item The sets that are not small with respect to diameter are small in measure:
\begin{equation}
\nu( A_\ell ) < \delta \quad \text{and} \quad \chi \Big( ( B_{\ell'} \times \Xi ) \cup ( \Xi \times B_{\ell'} ) \Big) < \delta.
\end{equation}
\end{enumerate}

The existence of good pairs of partitions for each $\epsilon,\delta>0$ is a consequence of Prokhorov's Theorem and we leave to the reader to check the details of this construction.

\section{Proof preparation}\label{sec:prep}

\par This section contains the statement of the main propositions that are necessary in order to prove Theorems~\ref{t:ldp_uniformg_general} and~\ref{t:ldp_erg_general} as well as some additional notation and precise construction of the models.

\subsection{The models}\label{sec:model}

\par We fix $d>0$ and a sequence $m_n\in \mathbb{N}$ such that
\begin{equation}
\lim_{n} \frac{m_n}{n} = \frac{d}{2}\,.
\end{equation}
Recall that $\mathcal{G}_{n,m_n}$ is the set of graphs with $n$ vertices and $m_{n}$ edges.

  Let $G_{n}$ be a random graph ensemble, which can be either uniformly sampled over $\mathcal{G}_{n,m_n}$ or an Erd\H{o}s-R\'enyi random graph $G(n, p)$, with $p = \frac{d}{n}$.
  Fix two distributions $\nu \in \mathcal{P}(\Theta)$ and $\chi\in\mathcal{P}(\Xi\times\Xi)$.
  Given the underlying random graph $G_{n}$, we define the marks on $G_{n}$ with distributions $\nu$ and $\chi$ in the following way:
\begin{enumerate}
\item To each vertex $v\in G_n$ we associate a mark $O_v\in \Theta$ and we assume that $(O_v)_{v\in G_n}$ are i.i.d. with law $\nu$.
\item To each edge $e=\{u,v\} \in G_n$ we first sample, independently from the previous step, a mark $X_{e}=(X^1_e,X^2_e)\in \Xi\times\Xi$ and we assume that $(X_e)_e$ are i.i.d. with law $\chi$.
  After that, with probability $1/2$ the mark of the oriented edge $(u,v)$ is $X^1_e$ and the mark of the edge $(v,u)$ is $X^2_e$.
  With  probability $1/2$ the mark of $(u,v)$ is $X^2_e$ and the mark of $(v,u)$ is $X^1_e$.  
\end{enumerate}

\subsection{The case of discrete marks}\label{sec:results}

\par In this subsection we state the analog of Theorem~\ref{t:ldp_erg_general} for the case when $\Theta$ and $\Xi$ are finite sets.

For $\mu \in \mathcal{P}(\bar{\mathcal{G}}_*)$, define $P(\vec{\deg}(\mu))$ via~\eqref{def:Pd} and
\begin{equation}\label{eq:rate_function_uniform_graph}
\lambda_{d}(\mu)= I_{P(\vec{\deg}(\mu)), \Pi(\mu)}(\mu) + \frac{d}{2}H \big( P(\vec{\deg}(\mu)) \big| \chi_\leq \big) + H(\Pi(\mu)|\nu)\,,
\end{equation}
where $\chi_\leq(x,x')=(\chi(x,x')+\chi(x',x))/2$ for $x\leq x'$.

\begin{theorem}\label{t:ldp_uniformg}
  Assume that $\Theta$ and $\Xi$ are finite sets and let $\nu \in \mathcal{P}(\Theta)$ and $\chi\in\mathcal{P}(\Xi\times\Xi)$.
  Consider $G_n$ the marked graph with i.i.d.\ discrete marks (distributed according to $\nu$ and $\chi$ for vertices and edges, respectively) and underlying graph uniformly sampled from $\mathcal{G}_{n,m_n}$.
  The sequence of empirical neighborhood measures $U(G_n)$ satisfies a large deviation principle with a good rate function $I^u_{d}:\mathcal{P}(\bar{ \mathcal{G}}_*)\to [0,\infty]$ given by
\begin{equation}\label{eq:rate_function_uniform_graph_2}
I^u_{d}(\mu)= \begin{cases}
\lambda_{d}(\mu), \text{ if } \deg(\mu) = d, \\
+\infty, \text{ otherwise}.
\end{cases}
\end{equation}
\end{theorem}

The next theorem provides the analogous result if the underlying graph ensemble is given by a sparse Erd\H{o}s-R\'enyi random graph.
\begin{theorem}\label{t:ldp_erg}
  Assume that $\Theta$ and $\Xi$ are finite sets and let  $\nu \in \mathcal{P}(\Theta)$ and $\chi\in\mathcal{P}(\Xi\times\Xi)$.
  Let $G_n$ be the marked graph with i.i.d.\ discrete marks (with distributions $\nu$ and $\chi)$ and underlying graph the Erd\H{o}s-R\'enyi random graph $G(n,p)$ with $p=\tfrac{d}{n}$.
  The sequence of empirical neighborhood measures $U(G_n)$ satisfies a large deviation principle with a good rate function $I_{d}^{\rm ER}:\mathcal{P}(\bar G_*)\to [0,\infty]$ given by
\begin{equation}\label{eq:rate_function_ER_2}
I_{d}^{\rm ER}(\mu)=I^u_{\deg(\mu)}(\mu)+\frac{1}{2}\Big( \deg(\mu) \log \frac{\deg(\mu)}{d}- \deg(\mu) +d \Big).
\end{equation}
\end{theorem}

The proofs of these results build on top of the work of Delgosha and Anantharam~\cite{da}, in particular, Corollary~\ref{c:ldp}. Theorems~\ref{t:ldp_uniformg} and~\ref{t:ldp_erg} follow by arguments regarding mixtures of measures (see Theorem~\ref{t:mixture} in the Appendix). 
In order to verify Theorem~\ref{t:ldp_uniformg}, we prove that the model of uniform marked graph with given number of edges and i.i.d.\ marks can be written as a mixture of the DA model, when the parameters $P$ (see ~\eqref{def:Pd}) and $Q$ are chosen at random with appropriate distributions. When this is done, Theorem~\ref{t:ldp_uniformg} follows from a general statement regarding large deviations for probabilities that can be written as mixtures. Theorem~\ref{t:ldp_erg} also follows the same lines, by noticing that the Erd\H{o}s-R\'eniy random graph can be obtained by choosing a random number of edges $m$ (distributed as a Binomial$\big( \binom{n}{2}, p \big)$ random variable) and then independently sampling a uniform graph with $m$ edges. Theorem~\ref{t:ldp_uniformg} and Theorem~\ref{t:mixture} can now be combined to conclude Theorem~\ref{t:ldp_erg}.

\subsection{Rate functions for the general case}\label{sec:mainresults2}

\par We are now in position to provide expressions for the rate functions in Theorems~\ref{t:ldp_uniformg_general} and~\ref{t:ldp_erg_general}. Recall the notion of $(\epsilon, \delta)$-good partitions in Section~\ref{sec:partproject}. For each $k$, we consider a $\big( \tfrac{1}{2k}, e^{-k^{2}} \big)$-good tagged partition of $\Theta$ and $\Xi$, which we call $\mathcal{A}_{k}$ and $\mathcal{B}_{k}$.
  Recall the projected graph introduced in Section~\ref{sec:partproject} and denote, for each fixed $k$, by $G_{n}^{k}$ the graph obtained by projecting the marks on vertices with the map $\pi_{\mathcal{A}_{k}}$ and marks on edges with the map $\pi_{\mathcal{B}_{k}}$.
  Define now, for $\mu \in \mathcal{P}\big( \bar{\mathcal{G}}_* ( \Theta, \Xi ) \big)$,
\begin{equation}\label{eq:rate_function_uniform_graph_discretized}
\lambda^{k}_{d}(\mu)= I^{k}_{P(\vec{\deg}(\mu)), \Pi(\mu)}(\mu) + \frac{d}{2}H \big( P(\vec{\deg}(\mu)) \big| \chi_\leq^{\pi_{\mathcal{B}_k}} \big) + H(\Pi(\mu)|\nu^{\pi_{\mathcal{A}_k}}),
\end{equation}
where $I^{k}_{P(\vec{\deg}(\mu)), \Pi(\mu)}(\mu)$ denotes rate function given by Theorem~\ref{t:ldp} when the spaces of marks are $\pi_{\mathcal{A}_{k}}(\Theta)$ and $\pi_{\mathcal{B}_{k}}(\Xi)$. Notice that the quantity above is infinite unless $\mu$ is supported on $\bar{\mathcal{G}}_* \big(\pi_{\mathcal{A}_{k}}(\Theta), \pi_{\mathcal{B}_{k}}(\Xi) \big)$.

  Theorem~\ref{t:ldp_uniformg} states now that the sequence $\big( U(G^{k}_{n}) \big)_{n\in \mathbb{N}}$ satisfies a large deviation principle on the space $\mathcal{P}\big( \bar{\mathcal{G}}_* (\Theta, \Xi) \big)$ with rate function defined by, for $\mu \in \mathcal{P}\big( \bar{\mathcal{G}}_* (\Theta, \Xi) \big)$,
\begin{equation}
I_{d}^{u,k}(\mu)= \begin{cases}
\lambda_{d}^{k}(\mu), \text{ if } \deg(\mu) = d, \\
+\infty, \text{ otherwise}.
\end{cases}
\end{equation}

The rate function in Theorem~\ref{t:ldp_uniformg_general} is now given by
\begin{equation}\label{eq:liminfliminf}
\bar{I}_d^{u}(\mu) = \limsup_{\delta \to 0} \limsup_{k \to \infty} \inf_{ \nu \in B_{\delta}(\mu) } I_d^{u,k}(\nu).
\end{equation}

For $\bar{I}_d^{\rm ER}$, consider as above $\mathcal{A}_{k}$ and $\mathcal{B}_{k}$ $\big( \tfrac{1}{2k}, e^{-k^{2}} \big)$-good tagged partitions of $\Theta$ and $\Xi$ adapted to $\nu \in \mathcal{P}(\Theta)$ and $\chi\in \mathcal{P}(\Xi^{2})$, for each integer $k \geq 1$.
  Let $G_{n}$ denote the sparse Erd\H{o}s-R\'enyi random graph $G(n,p)$, with $p=\tfrac{d}{n}$, and i.i.d.\ marks on vertices and edges with distribution $\nu$ and $\chi$, respectively. Theorem~\ref{t:ldp_erg} states that the sequence of graphs with projected marks $\big( U(G^{k}_{n}) \big)_{n\in \mathbb{N}}$ satisfies a large deviation principle on the space $\mathcal{P}\big( \bar{\mathcal{G}}_* (\Theta, \Xi) \big)$ with rate function given by, for $\mu \in \mathcal{P}\big( \bar{\mathcal{G}}_* (\Theta, \Xi) \big)$,
\begin{equation}
I_{d}^{{\rm ER},k}(\mu)=I^{u,k}_{\deg(\mu)}(\mu)+\deg(\mu) \log \frac{2\deg(\mu)}{d}- \deg(\mu) +\frac{d}{2}. 
\end{equation}

Analogously to the setting of Theorem~\ref{t:ldp_uniformg_general}, we define
\begin{equation}\label{eq:liminfliminf_ER}
\bar{I}_d^{\rm ER}(\mu) = \limsup_{\delta \to 0} \limsup_{k \to \infty} \inf_{ \nu \in B_{\delta}(\mu) } I_d^{{\rm ER },k}(\nu).
\end{equation}

\begin{remark} Observe also the following relation
\begin{equation*}
\bar{I}_d^{\rm ER}(\mu)=\bar{I}_d^{u}(\mu)+\deg(\mu) \log \frac{2\deg(\mu)}{d}- \deg(\mu) +\frac{d}{2}.
\end{equation*}
\end{remark}

\subsection{Minimizers}

We now turn to the problem of identifying minimizers of the rate functions, starting with the DA model, where a minimizer can be obtained as a marked unimodular Galton-Watson tree with seed $\PP^{*}_{d}$ that we introduce now.

In the context of graphs without marks,~\cite[Corollary 1.4]{bc} implies that the minimizer is given by a unimodular Galton-Watson random tree with offspring distribution $\poisson(d)$. A natural guess is then to expect that the same holds for the underlying graph in the DA model. Indeed, let $\PP^{*}_{d} \in \mathcal{P}_1$ denote the distribution of marked random graph where the number of children of the root is distributed according to $\poisson(d)$. We then independently attribute marks to vertices and edges with distribution $Q$ and $d_{x,x'}/d$ (think of attributing marks to oriented edges), respectively.

Equivalently, we define, for $t \in \bar{\mathcal{T}}_{*}^{1}$,
\begin{equation*}
N_{x,x'}^{\theta,\theta'}(t,o)=|\{v\sim_t o\,:\,\tau_t(o) = \theta, \tau_t(v) = \theta', \xi_t(o,v)=x', \xi_t(v,o)=x\}|,
\end{equation*}
the distribution $\PP^{*}_{d}$ is given by
\begin{equation}\label{def:P*}
\PP^*_{d}(t)=Q(\theta_t)\prod_{\theta', x, x'}{\rm Poi}(\alpha_{x,x'}^{\theta'})(N_{x,x'}^{\theta_t,\theta'}(t)),
\end{equation}
where $\theta_t$ is the mark on the root of $t$, $\alpha_{x,x'}^{\theta'}=Q(\theta')d_{x,x'}$, and for $\lambda>0$ and $n \in \N$, ${\rm Poi}(\lambda)(n) = e^{-\lambda}\frac{\lambda^{n}}{n!}$ is the probability that a random variable with distribution Poisson with parameter $\lambda$ equals $n$.

The proposition below states that the seed above indeed provides a minimizer for the DA model.
\begin{proposition}\label{prop:minimizer}
The minimizer of the rate function $I_{\vec{d}, Q}$ is the marked unimodular Galton-Watson tree with seed $\PP^{*}_{d}$.
\end{proposition}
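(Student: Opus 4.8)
The plan is to show that $I_{\vec d,Q}$ attains its minimal value $0$ at the tree ${\rm UGWT}_1(\PP^*_d)$. The lower bound $I_{\vec d,Q}\ge 0$ is immediate: by item~2 of Theorem~\ref{t:equality_entropy}, $\Sigma_{\vec d,Q}(\mu)\le s(\vec d\,)+H(Q)$ for every $\mu$, whence $I_{\vec d,Q}(\mu)=H(Q)+s(\vec d\,)-\Sigma_{\vec d,Q}(\mu)\ge 0$, so it suffices to produce a $\mu$ with $\Sigma_{\vec d,Q}(\mu)=s(\vec d\,)+H(Q)$. First I would check that $\PP^*_d\in\mathcal{P}_1$ and that its root statistics are the prescribed ones. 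Conditionally on the root mark $\theta$, the quantities $N^{\theta,\theta'}_{x,x'}$ are independent with law $\mathrm{Poi}\big(Q(\theta')d_{x,x'}\big)$, so $\deg_T(o)$ is a finite sum of independent Poisson variables; this gives $\E_{\PP^*_d}[\deg_T(o)]=\sum_{\theta',x,x'}Q(\theta')d_{x,x'}=d<\infty$ and $\E_{\PP^*_d}[\deg_T(o)\log\deg_T(o)]<\infty$, while $H(\PP^*_d)<\infty$ follows from the finiteness of $\Theta,\Xi$ and of the entropy of each Poisson law. The identity $e_{\PP^*_d}\big((x,\theta),(x',\theta')\big)=Q(\theta)Q(\theta')d_{x,x'}$ together with $d_{x,x'}=d_{x',x}$ shows $\PP^*_d$ is admissible, and the same computation yields $\Pi_\theta(\PP^*_d)=Q(\theta)$ and $\deg^{x,x'}(\PP^*_d)=d_{x,x'}$, i.e.\ $\vec\deg(\PP^*_d)=\vec d$ and $\Pi(\PP^*_d)=Q$. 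Proposition~\ref{prop:entropyUGW} then gives $\Sigma_{\vec d,Q}\big({\rm UGWT}_1(\PP^*_d)\big)=J_1(\PP^*_d)$, so the whole statement reduces to the identity $J_1(\PP^*_d)=s(\vec d\,)+H(Q)$.

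The core computation is this identity, obtained directly from the definition~\eqref{eq:J_h} of $J_1$. At depth one, $\Xi\times\bar{\mathcal{T}}^{0}_*$ is just $\Xi\times\Theta$, the depth-$1$ type of an edge $(o,v)$ at the root is $\big((\xi_T(v,o),\tau_T(o)),(\xi_T(o,v),\tau_T(v))\big)$, and hence $E_1\big((x,\theta),(x',\theta')\big)(T,o)$ equals $N^{\theta,\theta'}_{x,x'}(T,o)$ if $\tau_T(o)=\theta$ and is $0$ otherwise. Using the entropy chain rule one gets $H(\PP^*_d)=H(Q)+\sum_\theta Q(\theta)\sum_{\theta',x,x'}H\big(\mathrm{Poi}(Q(\theta')d_{x,x'})\big)$, the factorial term in $J_1$ equals $\sum_{\theta,\theta',x,x'}Q(\theta)\,\E_{\mathrm{Poi}(Q(\theta')d_{x,x'})}[\log N!]$, and $\pi_{\PP^*_d}\big((x,\theta),(x',\theta')\big)=Q(\theta)Q(\theta')d_{x,x'}/d$. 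Plugging in the elementary identity $H(\mathrm{Poi}(\lambda))=2s(\lambda)+\E_{\mathrm{Poi}(\lambda)}[\log N!]$ (which comes from $s(\lambda)=\tfrac{\lambda}{2}(1-\log\lambda)$), the factorial term cancels against $H(\PP^*_d)$ up to $H(Q)+2\sum_{\theta',x,x'}s(Q(\theta')d_{x,x'})$, while $H(\pi_{\PP^*_d})=2H(Q)+\log d-d^{-1}\sum_{x,x'}d_{x,x'}\log d_{x,x'}$. Substituting these into~\eqref{eq:J_h} and simplifying with $-s(d)=-\tfrac d2+\tfrac d2\log d$ and $s(\vec d\,)=\tfrac d2-\tfrac12\sum_{x,x'}d_{x,x'}\log d_{x,x'}$, the $H(Q)$-terms collapse to a single $H(Q)$ and everything else reduces to $s(\vec d\,)$; thus $J_1(\PP^*_d)=s(\vec d\,)+H(Q)$, so $I_{\vec d,Q}\big({\rm UGWT}_1(\PP^*_d)\big)=0$ and the tree is a minimizer.

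The difficulty here is almost entirely bookkeeping: one must keep the orientation and indexing conventions of $E_1$, $N^{\theta,\theta'}_{x,x'}$, $e_{\PP^*_d}$ and $\pi_{\PP^*_d}$ consistent, since it is only after all of them are aligned that the Poisson-entropy terms cancel and the final collapse to $s(\vec d\,)$ goes through. If one wants the strengthening from ``a minimizer'' to ``the minimizer'', it remains to pin down the equality case $\Sigma_{\vec d,Q}(\mu)=s(\vec d\,)+H(Q)$: writing $\Sigma_{\vec d,Q}(\mu)=\lim_h J_h(\mu_h)$ via Proposition~\ref{prop:truncated_entropy} and refining the computation above to the bound $J_h(\PP)\le s(\vec\deg(\PP))+H(\Pi(\PP))$ with equality only for $\PP=\big({\rm UGWT}_1(\PP^*_d)\big)_h$ would force $\mu={\rm UGWT}_1(\PP^*_d)$; alternatively, uniqueness can be read off from the unmarked statement~\cite[Corollary~1.4]{bc} combined with the product (independence) structure of the marks in~\eqref{def:P*}.
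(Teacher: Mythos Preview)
Your proposal is correct and follows essentially the same route as the paper: reduce via Proposition~\ref{prop:entropyUGW} to the identity $J_1(\PP^*_d)=H(Q)+s(\vec d\,)$ and verify it by direct computation of the four pieces of~\eqref{eq:J_h}. The only differences are cosmetic: you package the cancellation between $H(\PP^*_d)$ and the factorial term through the clean identity $H(\mathrm{Poi}(\lambda))=2s(\lambda)+\E[\log N!]$, whereas the paper expands each term separately and arrives at the same intermediate relations (your two displayed results for $H(\PP^*_d)-\sum\E[\log E_1!]$ and for $-\tfrac{d}{2}H(\pi_{\PP^*_d})$ are exactly the paper's equations~\eqref{second:id} and~\eqref{first:id}); you also spell out the strong admissibility of $\PP^*_d$ and comment on uniqueness, both of which the paper leaves implicit.
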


The proof of this proposition follows from explicit calculations relying on the expression for the distribution $\PP^*_{d}$. In view of Proposition~\ref{prop:entropyUGW}, it suffices to prove that $J_1(\PP^*_{d})=H(Q)+s(\vec d)$, with $J_{1}$ as in~\eqref{eq:J_h}.

With this proposition in hands, we can easily combine it with the expressions for the rate functions in the contexts presented above to deduce expressions for the rate functions.

Assume first that the spaces $\Theta$ and $\Xi$ are finite. Given two distributions $\nu$ and $\chi$ in $\Theta$ and $\Xi \times \Xi$, respectively, let $\PP^{*}_{\nu, \chi, d} \in \mathcal{P}_1$ denote the distribution of marked random tree of depth one where the root has degree distributed according to ${\rm Poi}(d)$ and independent marks on vertices and edges with respective distributions $\nu$ and $\chi$. In the collection of edge marks, if both marks are distinct, we independently sample an orientation for the edge uniformly.
\begin{proposition}\label{prop:minimizer_discrete}
Assume that the spaces $\Theta$ and $\Xi$ are finite. The marked unimodular Galton-Watson tree with seed $\PP^{*}_{\nu, \chi, d}$ is a minimizer of the rate functions $I^{u}_{d}$ and $I^{\rm ER}_{d}$.
\end{proposition}

In the case when the spaces of marks are not discrete, one can still define the seeds $\PP^{*}_{\nu, \chi, d}$ and the next proposition states that the marked unimodular Galton-Watson tree with seed $\PP^{*}_{\nu, \chi, d}$ is still the minimizer of the rate functions.
\begin{proposition}\label{prop:minimizer_general}
The marked unimodular Galton-Watson tree with seed $\PP^{*}_{\nu, \chi, d}$ is a minimizer of the rate functions $\bar{I}^{u}_{d}$ and $\bar{I}^{\rm ER}_{d}$.
\end{proposition}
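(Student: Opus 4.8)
The plan is to bootstrap from the discrete case (Proposition~\ref{prop:minimizer_discrete}) using the discretization machinery that underlies Theorems~\ref{t:ldp_uniformg_general} and~\ref{t:ldp_erg_general}. Fix the sequence of $\big(\tfrac{1}{2k}, e^{-k^2}\big)$-good tagged partitions $\mathcal{A}_k$ of $\Theta$ and $\mathcal{B}_k$ of $\Xi$ adapted to $\nu$ and $\chi$. For each $k$, let $\mu^{*}$ denote the law of the marked unimodular Galton-Watson tree with seed $\PP^{*}_{\nu,\chi,d}$, and let $(\mu^*)^{\pi_k}$ be its pushforward under the projection $\pi_{\mathcal{A}_k}\times\pi_{\mathcal{B}_k}$ acting on marks. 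The first observation is that, because marks are attached i.i.d., projecting the marks of the UGWT with seed $\PP^{*}_{\nu,\chi,d}$ yields exactly the UGWT with seed $\PP^{*}_{\nu^{\pi_{\mathcal{A}_k}},\,\chi^{\pi_{\mathcal{B}_k}},\,d}$; this is a direct consequence of the thinning/mapping property of Poisson random measures together with the recursive construction of UGWT$_h$, and should be stated as a short lemma. Hence by Proposition~\ref{prop:minimizer_discrete} applied with the finite mark spaces $\pi_{\mathcal{A}_k}(\Theta)$ and $\pi_{\mathcal{B}_k}(\Xi)$, we have $I_d^{u,k}\big((\mu^*)^{\pi_k}\big)=0$ and $I_d^{{\rm ER},k}\big((\mu^*)^{\pi_k}\big)=0$ for every $k$.

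Next I would pass this to the limit functions $\bar{I}_d^{u}$ and $\bar{I}_d^{\rm ER}$ defined in~\eqref{eq:liminfliminf} and~\eqref{eq:liminfliminf_ER}. Since $\bar{I}_d^u$ is a rate function (bounded below by $0$), it suffices to show $\bar{I}_d^{u}(\mu^*)\le 0$, and likewise for the ER case. For each $k$ the measure $(\mu^*)^{\pi_k}$ is a candidate inside $B_\delta(\mu^*)$ once $k$ is large enough relative to $\delta$: indeed $(\mu^*)^{\pi_k}\to\mu^*$ weakly as $k\to\infty$, because the partition diameters shrink to $0$ off a set of $\nu$- and $\chi$-measure at most $e^{-k^2}$, so a coupling of $\mu^*$ with $(\mu^*)^{\pi_k}$ that leaves the underlying graph fixed and only perturbs marks has $\d_{\bar{\mathcal{G}}_*}$-cost going to $0$ in expectation (this is the same Lipschitz/coupling estimate used in the proof of the main LDPs, via the bounded-Lipschitz bound following from the unnamed coupling lemma after $\d_{\rm BL}$). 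Therefore for each $\delta>0$, once $k\ge k_0(\delta)$ we have $(\mu^*)^{\pi_k}\in B_\delta(\mu^*)$, which gives $\inf_{\rho\in B_\delta(\mu^*)}I_d^{u,k}(\rho)=0$ for all large $k$, hence $\limsup_{k}\le 0$, hence $\bar{I}_d^{u}(\mu^*)\le 0$. Combined with nonnegativity, $\bar{I}_d^u(\mu^*)=0$, so $\mu^*$ is a minimizer; the ER argument is identical using~\eqref{eq:liminfliminf_ER} and the remark relating $\bar{I}_d^{\rm ER}$ to $\bar{I}_d^u$.

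The main obstacle is the bookkeeping in the first step: one must verify carefully that the projection of the seed $\PP^*_{\nu,\chi,d}$ is again a seed of the same type, i.e.\ that projecting marks commutes with the UGWT construction. The subtlety is that the edge mark is an unordered pair with an independent uniform orientation, and one must check that projecting the pair $(X^1_e,X^2_e)$ componentwise and then orienting gives the same law as orienting first and then projecting — this holds because $\pi_{\mathcal{B}_k}\times\pi_{\mathcal{B}_k}$ commutes with the coordinate swap, and because the Poisson intensities $\alpha^{\theta'}_{x,x'}=Q(\theta')d_{x,x'}$ aggregate correctly under the pushforward (superposition of independent Poissons). Granting that, the rest is a soft weak-convergence and coupling argument of exactly the kind already carried out for Theorems~\ref{t:ldp_uniformg_general} and~\ref{t:ldp_erg_general}, so I would keep it brief and cite those estimates.
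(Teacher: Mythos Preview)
Your proposal is correct and follows essentially the same route as the paper: identify the pushforward $(\mu^*)^{\pi_k}$ with ${\rm UGWT}(\PP^{*}_{\nu^{\pi_{\mathcal{A}_k}},\chi^{\pi_{\mathcal{B}_k}},d})$, invoke Proposition~\ref{prop:minimizer_discrete} to get $I_d^{u,k}\big((\mu^*)^{\pi_k}\big)=0$, and then show $(\mu^*)^{\pi_k}\to\mu^*$ weakly via the obvious coupling that projects marks on the same underlying tree, so that $(\mu^*)^{\pi_k}\in B_\delta(\mu^*)$ for large $k$ and the $\limsup$ in~\eqref{eq:liminfliminf} vanishes. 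The only difference is emphasis: you flag the projection-commutation step as a lemma to be checked, whereas the paper asserts it in one line and instead spends its effort making the coupling bound quantitative (controlling $\P(\d_{\bar{\mathcal{G}}_*}(t,t_k)>2/k)$ via explicit growth estimates on the Poisson Galton--Watson tree).
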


The proof of Proposition~\ref{prop:minimizer_discrete} will follow from a combination of Proposition~\ref{prop:minimizer} together with the expressions for $I^{u}_{d}$ and $I^{\rm ER}_{d}$ in~\eqref{eq:rate_function_uniform_graph_2} and~\eqref{eq:rate_function_ER_2}.
  
The proof of Proposition~\ref{prop:minimizer_general}, the case where the marks lie in general Polish spaces, requires a step further. Indeed, in this case the rate function is obtained as an approximation of the rate function of the discrete case. We verify that the minimizers of the discretized rate functions converge to the candidate minimizer, the marked unimodular Galton-Watson tree with seed $\PP^{*}_{\nu, \chi, d}$. This will be done by constructing a coupling between the two measures which is obtained by simply projecting the marks according to the projection maps introduced together with the good partitions, in the same spirit as in Lemma~\ref{lemma:distace_estimate}.

\section{From discrete to general spaces}\label{sec:discrete_to_general}

\par This section contains the proofs of Theorems~\ref{t:ldp_uniformg_general} and~\ref{t:ldp_erg_general} as well as the proof of Lemma~\ref{lemma:distace_estimate}.

\subsection{Proof of Theorems~\ref{t:ldp_uniformg_general} and~\ref{t:ldp_erg_general}}

\par The proofs of both theorems follow from the general lemma presented below which applies for sequence of sparse graphs in the sense of Definition~\ref{def:sparsity}. This lemma provides necessary conditions in order to extend the result of large deviations with discrete marks to graphs with marks in general Polish spaces.

\begin{definition}\label{def:sparsity} Let $(G_n)_{n\in \mathbb{N}}$ be a sequence of random graphs. We say that $(G_n)_{n\in \mathbb{N}}$ is \textit{uniformly exponential tight} if, for each $\varepsilon>0$,
\begin{equation}\label{eq:many_limits}
\limsup_{h \to \infty} \limsup_{S \to \infty} \limsup_{n\to\infty} \frac{1}{n}\log \P\bigg(\frac{1}{n}\sum_{v \in V_{n}}\mathbf{1}_{\{|[G_n,v]_{h}|>S\}} > \varepsilon \bigg) = -\infty
\end{equation}
and
\begin{equation}\label{eq:bound_edges}
\limsup_{a \to \infty} \limsup_{n \to \infty} \frac{1}{n} \log \P\big( |E_{n}| \geq an \big) = -\infty.
\end{equation}
\end{definition}
\begin{lemma}\label{lemma:black_box}
Let $G_{n}$ be a random marked graph with i.i.d.\ marks on edges and vertices distributed according to $\nu \in \mathcal{P}(\Theta)$ and $\chi \in \mathcal{P}(\Xi^{2})$. Assume that the underlying graph of $G_{n}$ is uniformly exponential tight (cf.~Definition~\ref{def:sparsity}). 
Consider, for each integer $k > 0$, $\big( \tfrac{1}{2k}, e^{-k^{2}} \big)$-good tagged partitions $(\mathcal{A}_{k}, \mathcal{B}_{k})$ of $\Theta$ and $\Xi$ with respect to $\nu$ and $\chi$, respectively. Denote by $G_{n}^{k}$ the projected marked graph and assume that, for each $k$, $U(G_{n}^{k})$ satisfies a large deviation principle with rate function $I^{k}$.

Then, $U(G_{n})$ satisfies a large deviation principle with rate function given by
\begin{equation}
I(\mu) = \limsup_{\delta \to 0} \limsup_{k \to \infty} \inf_{ \nu \in B_{\delta}(\mu) } I^{k}(\nu).
\end{equation}
\end{lemma}

Notice that  Theorems~\ref{t:ldp_uniformg_general} and~\ref{t:ldp_erg_general} follow immediately from the lemma above.
  Lemma~\ref{lemma:caputoadaptado} can be used to verify~\eqref{eq:many_limits} for the two models of that theorems, while~\eqref{eq:bound_edges} follows from Lemma~\ref{lemma:number_of_edges_ER} for the Erd\H{o}s-R\'enyi random graph. Assumption~\eqref{eq:bound_edges} is void for the uniform graph with $m_{n}$ edges, since $\tfrac{m_{n}}{n} \to \tfrac{d}{2}$ implies that the event that the graph has more than $an$ edges is empty if $a$ is large enough.

\begin{proof}[Proof of Lemma~\ref{lemma:black_box}]
  We begin by observing that the statement follow directly from Lemma~\ref{lema:Kurtz} if one verifies~\eqref{eq:hipothesis_kurtz}.
  In view of Lemma~\ref{lemma:distace_estimate}, for every $S \geq 1$ and $h \geq 8k$, the quantity of interest is bounded by the maximum of the following three expressions
\begin{equation}\label{eq:estimates}
\begin{split}
\limsup_{n\to\infty}\frac{1}{n}\log \P\bigg(\frac{1}{n}\sum_{v \in V_{n}}\mathbf{1}_{\{|[G_n,v]_{h}|>S\}} > \frac{1}{16k}\bigg) \\
\limsup_{n\to\infty}\frac{1}{n}\log\P\bigg(\frac{1}{n}\sum_{v \in V_{n}}\mathbf{1}_{\{ O_{v} \in A^{*} \}} > \frac{1}{16Sk} \bigg) \\
\limsup_{n\to\infty}\frac{1}{n}\log\P\bigg(\frac{1}{n}\sum_{e\in E_n}\mathbf{1}_{\{ X_{e} \in B^{*} \}} > \frac{1}{16Sk} \bigg),
\end{split}
\end{equation}
uniformly in $h \geq 8k$ and $S \geq 1$.

  To control the first superior limit, notice that, due to~\eqref{eq:many_limits}, this quantity converges to $-\infty$ if we take the limit as $S$ and $h$ grow.

  Let us now focus on the second expression above, which only depends on the number of vertices of the graph and not on the graph structure. In this case, we apply Lemma~\ref{lemma:leiBinomial} to conclude that
\begin{equation}
\begin{split}
\limsup_{k \to \infty} \limsup_{n\to\infty} & \frac{1}{n} \log \P \bigg(\frac{1}{n}\sum_{v\in V_{n}}\mathbf{1}_{\{O_{v} \in A^{*}\}} > \frac{1}{16Sk} \bigg) \\
& \leq \limsup_{k \to \infty} \limsup_{n \to \infty} \frac{1}{n} \log \big( 16Ske^{-k^{2}+1} \big)^{\lceil \frac{1}{16Sk}n \rceil} \\
& \leq \limsup_{k \to \infty} \frac{1}{16Sk} \log \big( 16Ske^{-k^{2}+1} \big) = -\infty.
\end{split}
\end{equation}

  The last superior limit in~\eqref{eq:estimates} will also follow from Lemma~\ref{lemma:leiBinomial}.
  Indeed, notice that
\begin{equation}
\begin{split}
\P\bigg(\frac{1}{n}\sum_{e\in E_n}\mathbf{1}_{\{ X_{e} \in B^{*} \}}> \frac{1}{16Sk} \bigg) & \leq \P\bigg(\frac{1}{n}\sum_{e\in E_n}\mathbf{1}_{\{ X_{e} \in B^{*} \}}> \frac{1}{16Sk}, |E_{n}| \leq an \bigg) + \P\big( |E_{n}| \geq an \big) \\
& \leq \big( 16Sake^{-k^{2}+1} \big)^{\lceil \frac{1}{16aSk}n \rceil} + \P\big( |E_{n}| \geq an \big),
\end{split}
\end{equation}
which gives the bound
\begin{equation}
\limsup_{k \to \infty} \limsup_{n\to\infty} \frac{1}{n} \log \P\bigg(\frac{1}{n}\sum_{e\in E_n}\mathbf{1}_{\{ X_{e} \in B^{*} \}} > \frac{1}{16Sk} \bigg) \leq \limsup_{n \to \infty} \frac{1}{n} \log \P\big( |E_{n}| \geq an \big).
\end{equation}
  Since the estimate above is uniform in $a$, our result follows directly by applying Lemma~\ref{lemma:number_of_edges_ER} and taking the limit as $a$ grows.
  This concludes the proof.
\end{proof}

\subsection{Proof of the approximation result}\label{subsec:proof_main_lemma}

\par In this section we prove Lemma~\ref{lemma:distace_estimate}. For simplicity, we write $V$ and $E$ instead of $V(G)$ and $E(G)$, respectively.  Consider two $(\epsilon,\delta)$-good tagged partitions $\mathcal{A} = \{ (A_{1}, a_{1}), \dots, (A_{\ell}, a_{\ell}) \}$ of $\Theta$ and $\mathcal{B} = \{ (B_{1}, b_{1}), \dots, (B_{\ell'}, b_{\ell'}) \}$ of $\Xi$ (see Section~\ref{sec:partproject}). Recall the sets
\begin{equation}
A^*=A_\ell\,\text{ and } B^*=( B_{\ell'} \times \Xi ) \cup ( \Xi \times B_{\ell'} ).
\end{equation}

Fix $f:\bar{\mathcal{G}}_*\to\mathbb{R}$ with $\pnorm{BL}{f} \leq 1$. For $h \in \N$ fixed we derive, using the triangular inequality,
\begin{equation}\label{eq:BL_norm}
|U(G)_h(f)-U(G^{\pi})_h(f)| \leq \frac{1}{n}\sum_{v \in V}|f([G,v]_h)-f([G^{\pi},v]_h)|.
\end{equation}

We divide the proof in two steps. On the first step we define the set of vertices $v$ such that all the marks of $[G,v]_h$ fall in sets of small diameter. In the second step we control the influence of a mark that does not lie in a set of small diameter.

\textit{First step.} Define the subset of vertices
\begin{equation*}
\Gamma=\left\{\begin{array}{c} v\in V\,:\,\tau_w \in \Theta\setminus A^{*}, \text{ for all } w \in V([G,v]_h), \\ \text{and } \xi_e\in \Xi^2 \setminus B^{*}, \text{ for all } e \in E([G,v]_{h})
\end{array}\right\}.
\end{equation*}
We now prove the following bound
\begin{equation}\label{eq:disc-cont_eq_1}
\frac{1}{n}\sum_{v \in V}|f([G,v]_h)-f([G^{\pi},v]_h)|\leq \frac{1}{1+h}+\epsilon+\frac{2}{n}\sum_{v \in V}\mathbf{1}_{\{v \notin \Gamma\}}.
\end{equation}
In order to prove the bound in~\eqref{eq:disc-cont_eq_1} we split the expression on its left-hand side depending on whether  $v\in \Gamma$ or not. If $v\in \Gamma$, then $\d_{\Theta}(\tau_{w},\tau_{w}^{\pi}) \leq \epsilon$, for all $w \in V([G,v]_h)$, and $\d_{\Xi^2}(\xi_{e},\xi_{e}^{\pi})\leq \epsilon$, for all $e \in E([G,v]_h)$. As the underlying graphs of $[G,v]_h$ and $[G^{\pi},v]_h$ coincide it follows from the definition of the metric in $\bar{\mathcal{G}}^*$ that 
\begin{equation*}
\d_{\bar{\mathcal{G}}^*}([G,v]_h,[G^{\pi},v]_h)\leq \frac{1}{1+h}+\epsilon\,,\,\forall\,v \in \Gamma.
\end{equation*}
In particular, recalling that $f$ is $1$-Lipschitz,
\begin{equation*}
|f([G,v]_h)-f([G^{\pi},v]_h)|\leq \frac{1}{1+h}+\epsilon\,,\,\forall\,v\in\Gamma.
\end{equation*}
On the other hand, when $v \notin \Gamma$ we use that $f$ is bounded by $1$ to trivially estimate
\begin{equation*}
|f([G,v]_h)-f([G^{\pi},v]_h)|\leq 2.
\end{equation*}

\textit{Second step.} For fixed $S\in\mathbb{N}$ we define $\Delta=\{v\in V:|[G,v]_{2h}|\leq S\}$ and show that
\begin{equation}\label{eq:Sgamma}
\sum_{v \in V}\mathbf{1}_{\{v\notin \Gamma\}} \leq \sum_{v\in V}\mathbf{1}_{\{v\notin \Delta\}}+S\sum_{v \in V}\mathbf{1}_{\{\tau_{v} \in A^{*}\}}+S\sum_{e \in E}\mathbf{1}_{\{\xi_{e} \in B^{*}\}}.
\end{equation}

As before we split the left-hand side of~\eqref{eq:Sgamma} on whether $v\in \Delta$ or not. The crucial analysis is when $v\in \Delta$, i.e.,  $|[G,v]_{2h}|\leq S$. Since we are also assuming that $v\notin \Gamma$ then  either there exists  $w\in V([G,v]_{h})$ with $\tau_w\in A^*$ or there exists $e\in E([G,v]_h)$ with $\xi_e\in B^*$: 
\begin{equation}\label{eq:2sums}
\begin{split}
\sum_{v\in V}\mathbf{1}_{\{v\in \Delta,\,v\notin \Gamma\}}=&\sum_{v\in V}\sum_{w\in V}\mathbf{1}_{\{|[G,v]_{2h}|\leq S,\,w\in [G,v]_h,\,\tau_w\in A^* \}}\\
&+\sum_{v\in V}\sum_{e\in E}\mathbf{1}_{\{|[G,v]_{2h}|\leq S,\,e\in E([G,v]_h),\,\xi_e\in B^* \}}.
\end{split}
\end{equation}
We finish the proof by showing the following two estimates
\begin{equation}\label{eq:2sumsA}
\begin{split}
E_1:=\sum_{v\in V}\sum_{w\in V}\mathbf{1}_{\{|[G,v]_{2h}|\leq S,\,w\in [G,v]_h,\,\tau_w\in A^* \}}&\leq S\sum_{w\in V}\mathbf{1}_{\{\tau_w\in A^*\}}\,,\\
E_2:=\sum_{v\in V}\sum_{e\in E}\mathbf{1}_{\{|[G,v]_{2h}|\leq S,\,e\in E([G,v]_h),\,\xi_e\in B^* \}}&\leq S\sum_{e\in E}\mathbf{1}_{\{\xi_e\in B^*\}}.
\end{split}
\end{equation}
Let us start with the first bound in~\eqref{eq:2sumsA}. The goal is to keep only the sum on $w$. We will change the order of summation interchanging the role of $v$ and $w$. Observe that, if $w \in V([G,v]_h)$, then $v\in V([G,w]_h)$ and $V([G,w]_{h}) \subset V([G,v]_{2h})$. In particular, $v\in \Delta$ implies that $|V([G,w]_{h})| \leq |V([G,v]_{2h})|\leq S$. Therefore,
\begin{equation}\label{eq:Svertices}
\begin{split}
E_1&\leq\sum_{v\in V}\sum_{w\in V}\mathbf{1}_{\{|[G,w]_{h}|\leq S,\,\tau_w\in A^* \}}\mathbf{1}_{\{v\in [G,w]_h\}}\\
&=\sum_{w\in V}|[G,w]_{h}|\mathbf{1}_{\{|[G,w]_{h}|\leq S,\,\tau_w\in A^* \}}\\
&\leq S\sum_{w\in V}\mathbf{1}_{\{\tau_w\in A^*\}}.
\end{split}
\end{equation} 

The analysis of $E_2$ is very similar. For an edge $e\in E$, denote by $v_{e,1}$ and $v_{e,2}$ its endpoints: $e=\{v_{e,1},v_{e,2}\}.$ We define $B(e,h)=V([G,v_{e,1}]_h)\cup V([G,v_{e,2}]_h).$ Observe that $e\in E([G,v]_h)$ implies $v\in B(e,h)$ and $B(e,h)\subset [G,v]_{2h}.$ In particular, $|B(e,h)|\leq S$ if $v\in \Delta.$ The proof is concluded by repeating the same steps as in~\eqref{eq:Svertices}.

\section{Large deviation principle for discrete marks}\label{sec:discrete_marks}

\par We devote this section to the proof of Theorems~\ref{t:ldp_uniformg} and~\ref{t:ldp_erg}, which regard the case when the mark spaces $\Theta$ and $\Xi$ are finite metric spaces.

  Theorem~\ref{t:ldp} establishes a large deviation principle for the model of the uniform marked graph with prescribed vertex and edge mark count vectors. We follow~\cite{bc} showing that the models considered in Theorems~\ref{t:ldp_uniformg} and~\ref{t:ldp_erg} are mixture of the the DA model.

\subsection{Uniform graph with i.i.d.\ marks}
\label{sec:uniformgraph}
\par  We first show that the model of the uniform graph with i.i.d.\ marks (cf. Section~\ref{sec:model}) is written as mixture of the DA model. After that we deduce the large deviation principle from the mixture technique summarized in Appendix \ref{appendix:mixture_and_LDP}.

We assume throughout this section that $\Theta$ and $\Xi$ are finite sets.

In the following we calculate explicitly the distribution of $G_n$. First, for a pair $(x,y)\in \Xi\times\Xi$  we write $(x,y)_\leq=(x,y)$ if $x\leq y$ and $(x,y)_\leq=(y,x)$ otherwise.  For any $(H,\vec{\tau},\vec{\xi})\in \mathcal{G}_{\vec{m},\vec{u}}$ with $\|\vec{m}^{(n)}\|=m_n$ we have that
\begin{multline}\label{eq:lawiid}
\P[G_n=(H,\vec{\tau},\vec{\xi})]=\frac{1}{|\mathcal{G}_{n,m_n}|}\P(O_v= \tau_v\,,\forall\,v\in G_n)\\
\times\P((X_{\{u,v\}})_\leq=(\xi(u,v),\xi(v,u))_\leq\,,\forall \{u,v\}\in G_n) \frac{1}{2^{\sum_{x<\bar x}m(x,\bar x)}}\,.
\end{multline}

In order to deduce a large deviation principle for the sequence $(U(G_n))_{n\in\mathbb{N}}$ we write the distribution of $G_{n}$ as a mixture of the law of the DA model $G_{\vec{m},\vec{u}}$ when $(\vec{m}_\leq^{(n)}/m_n,\vec{u}_n/n)$ is sampled accordingly to $(L_n(\vec O),L_{m_n}(\vec X_\leq))$. 

\begin{lemma}\label{le:themix} For a fixed marked graph $\tilde{G}$, it holds that
\begin{equation}\label{eq:uufmixuif}
\P(G_n=\tilde{G})= \P(G_{\vec{m},\vec{u}}=\tilde{G})\P\bigg( L_{m_n}((\vec X)_\leq)=\frac{\vec m_\leq^{(n)}}{m_n}\bigg)\P\bigg( L_n(\vec O)=\frac{\vec u^{(n)}}{n} \bigg).
\end{equation}
\end{lemma}

Observe that the right-hand side of Equation~\eqref{eq:uufmixuif} is zero unless $\vec{m}^{(n)}=\vec{m}_{\tilde{G}}$ and $\vec{u}^{(n)}=\vec{u}_{\tilde{G}}$. We prove Lemma~\ref{le:themix} in Subsection~\ref{sec:themix}. Assuming its validity, we proceed to prove Theorem~\ref{t:ldp_uniformg}.

Consider $(P,Q)\in \mathcal{P}(\Xi^2_\leq)\times\mathcal{P}(\Theta)$ and  any sequence $(\vec{m}_n,\vec{u}_n)$ such that $\|\vec{m}_n\|_1=m_n$ and  $(\vec{m}_\leq^{(n)}/m_n,\vec{u}^{(n)}_n/n)$ converges weakly to $(P,Q)$. From Corollary~\ref{c:ldp} the empirical distribution of the DA model  $U(G_{\vec{m},\vec{u}})$ satisfies a large deviation principle with rate function given by
\begin{equation}\label{eq:rate_function_uniform_marked_graph}
I_{P,Q}(\mu)= H(Q)+s(\vec{d}(P,d)\,)-\Sigma_{\vec d(P,d),Q} (\mu).
\end{equation}

For $\mu \in \mathcal{P}(\bar{\mathcal{G}}_*)$, define $P(\vec{\deg}(\mu))$ via~\eqref{def:Pd} and
\begin{equation}
\lambda_{d}(\mu)= I_{P(\vec{\deg}(\mu)), \Pi(\mu)}(\mu) + \frac{d}{2}H \big( P(\vec{\deg}(\mu)) \big| \chi_\leq \big) + H(\Pi(\mu)|\nu)\,,
\end{equation}
where $\chi_\leq(x,x')=(\chi(x,x')+\chi(x',x))/2$ for $x\leq x'$.

Combining Corollary~\ref{c:ldp} with \eqref{eq:uufmixuif} we proceed to prove  Theorem~\ref{t:ldp_uniformg}.

\begin{proof}[Proof of Theorem~\ref{t:ldp_uniformg}]
Throughout this proof, we follow the notation from Appendix~\ref{appendix:mixture_and_LDP}. Set first $\Gamma = \mathcal{P}\big( \Xi^2_\leq \big)\times\mathcal{P}(\Theta) $ and consider $\mu_{n}$ as the distribution of the pair $\left(L_{m_n}(\vec X_\leq) ,L_n(\vec O)\right)$ with independent coordinates.
Sanov's Theorem implies that the sequence of measures $(\mu_n)_n$ satisfies a large deviation principle with speed $n$ and rate function
\begin{equation}
\psi(\alpha, \theta) = \frac{d}{2}H( \alpha | \chi_\leq) + H( \theta | \nu).
\end{equation}
Exponential tightness comes from the fact that $\Gamma$ is a compact metric space, since both $\Xi$ and $\Theta$ are assumed finite.

Second, let $\mathfrak{X}=\mathcal{P}(\bar{\mathcal{G}}_{*})$ and
\begin{equation}
\Gamma_{n} = \Big\{ \Big( \tfrac{\vec{m}_\leq^{(n)}}{m_{n}}, \tfrac{\vec{u}^{(n)}}{n} \Big) : \|\vec m^{(n)}\|_{1} = m_{n} \text{ and } \|\vec u^{(n)}\|_{1} = n \Big\} \subset \Gamma.
\end{equation}
Consider now a sequence $(\vec{m}^{(n)},u^{(n)})$ such that 
\begin{equation}\label{eq:converge}
\Big( \tfrac{\vec{m}_\leq^{(n)}}{m_{n}}, \tfrac{u^{(n)}}{n} \Big) \to \big( P, R \big)\in \Gamma\,.
\end{equation}
From Corollary~\ref{c:ldp} it follows that $U(G_{\vec{m},\vec{u}})$ is exponentially tight and satisfies a large deviation principle with speed $n$ and rate function $I_{P,R}$.

From Theorem~\ref{t:mixture}, $(U(G_n))_{n\in\mathbb{N}}$ satisfies a large deviation principle with speed $n$ and rate function defined by, for any $\mu \in \mathcal{P}(\bar G_*)$, 
\begin{equation*}
J_{d}(\mu)=\inf\bigg\{I_{P,R}(\mu)+\frac{d}{2}H(P |\chi_\leq) + H(R |\nu)\,:\,\big(P, R \big)\in \mathcal{P}(\Xi^2_\leq) \times \mathcal{P}(\Theta) \bigg\}.
\end{equation*}
Observe that $I_{P, R}(\mu)$ is infinite when either $P(\vec{\deg}(\mu)) \neq P$ or $\Pi_{\theta}(\mu) \neq R$, which in particular implies that  $I_{P,R}(\mu)$ is infinite if $\deg(\mu) \neq d$. As a consequence, we obtain the expression
\begin{equation*}
J_{d}(\mu)= \begin{cases} \lambda_d(\mu), & \text{ if } \deg(\mu) = d, \\
\infty, & \text{ otherwise,}
\end{cases}
\end{equation*}
and this concludes the proof.
 \end{proof}

\subsubsection{The mixture}\label{sec:themix}

In this section we prove Lemma~\ref{le:themix}. Recall the definition of $G_n$ from~\eqref{eq:lawiid} and that $\vec{O}$ and $\vec{X}$ have product laws.

In order to prove \eqref{eq:uufmixuif} we need to relate $\P(\vec{O}=\vec{\theta})$ to $\P(L_n(\vec{O})=\nu)$ and $|\mathcal{G}_{n,m_n}|$ to $|\mathcal{G}_{\vec{m},\vec{u}}|.$  First we introduce some additional notation based on \cite{dembo_zeitouni}. Let
\begin{equation*}
\mathcal{L}_n(\Theta)=\{L_n(\vec{\theta})\,:\,\vec{\theta}\in \Theta^n\}
\end{equation*}
be the set of all possible types and, for a measure $\nu \in \mathcal{L}_n(\Theta)$, we write
\begin{equation*}
T_n(\nu,\Theta)=\{\vec{\theta}\in \Theta^n\,:\,L_n(\vec{\theta})=\nu\}
\end{equation*}
for the type class of $\nu$. We have similar definitions for $\Xi^2_\leq$ instead of $\Theta$ and with $m_n$ instead of $n$.

\par 
From~\cite[Equation~9]{da} it holds that
\begin{equation}\label{eq:indianos-types}
|\mathcal{G}^{(n)}_{\vec{m},\vec{u}}|=|\mathcal{G}_{n,m_n}|\cdot\bigg|T_n\bigg(\frac{\vec{u}^{(n)}}{n},\Theta\bigg)\bigg|\cdot\bigg|T_{m_n}\bigg(\frac{\vec{m}_\leq^{(n)}}{m_n},\Xi^2_\leq\bigg)\bigg|\cdot 2^{\sum_{x<\bar x}m(x,\bar x)}\,.
\end{equation}

Lemma~\ref{le:themix} is deduced by combining Proposition~\ref{prop:exch} below and Equation~\eqref{eq:indianos-types}.

\begin{proposition}\label{prop:exch} Let $\vec{\theta}\in \Theta^n$ and $\vec{\xi}\in (\Xi^2_\leq)^{m_n}$. For any $\nu=L_n(\vec{\theta})$ and $\lambda=L_{m_n}(\vec{\xi})$ we have 
\begin{align*}
\P(\vec{O}=\vec{\theta})=\frac{\P(L_n(\vec{O})=\nu)}{|T_n(\nu,\Theta)|} \,\,\text{ and }\,\, \P(\vec{X}_\leq=\vec{\xi})=\frac{\P(L_{m_n}(\vec{X}_\leq)=\lambda)}{|T_{m_n}(\lambda,\Xi^2_\leq)|}.
\end{align*}
\end{proposition}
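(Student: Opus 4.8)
The plan is to derive both identities from the single elementary fact that an i.i.d.\ sequence has an exchangeable law, so that any two finite samples sharing the same type have the same probability. I will spell out the argument for $\vec{O}$; the one for $\vec{X}_\leq$ is word-for-word the same once one checks that the coordinates of $\vec{X}_\leq$ are themselves i.i.d.

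First I would record the combinatorial observation that, for $\vec{\theta},\vec{\theta}'\in\Theta^n$, one has $L_n(\vec{\theta})=L_n(\vec{\theta}')$ if and only if $\vec{\theta}'$ is a coordinate permutation of $\vec{\theta}$; equivalently, $T_n(\nu,\Theta)$ is exactly the orbit of any one of its elements under the natural action of the symmetric group $S_n$ on $\Theta^n$. Since $\vec{O}=(O_1,\dots,O_n)$ has i.i.d.\ coordinates, its law on $\Theta^n$ is the product measure $\nu^{\otimes n}$, which is invariant under this $S_n$-action; hence $\P(\vec{O}=\vec{\theta})=\P(\vec{O}=\vec{\theta}')$ whenever $L_n(\vec{\theta})=L_n(\vec{\theta}')=\nu$.

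Next I would write the event $\{L_n(\vec{O})=\nu\}$ as the disjoint union $\bigcup_{\vec{\theta}'\in T_n(\nu,\Theta)}\{\vec{O}=\vec{\theta}'\}$. Summing probabilities and using that every term equals $\P(\vec{O}=\vec{\theta})$ for the fixed representative $\vec{\theta}\in T_n(\nu,\Theta)$, we obtain $\P(L_n(\vec{O})=\nu)=|T_n(\nu,\Theta)|\cdot\P(\vec{O}=\vec{\theta})$, which rearranges to the first displayed identity; the set $T_n(\nu,\Theta)$ contains $\vec{\theta}$ by hypothesis, so it is nonempty and there is no division by zero. (If $\vec{\theta}$ uses a symbol of zero $\nu$-mass then both sides vanish, since then $\nu=L_n(\vec{\theta})$ charges that symbol while $\{L_n(\vec{O})=\nu\}$ has probability $0$, and the identity holds trivially.) For $\vec{X}_\leq$, each $X_e$ is independent with law $\chi$ and $(\,\cdot\,)_\leq:\Xi^2\to\Xi^2_\leq$ is a fixed deterministic map, so $((X_e)_\leq)_e$ is again an i.i.d.\ family; the three steps above then apply verbatim with $\Theta$ replaced by $\Xi^2_\leq$ and $n$ by $m_n$, yielding the second identity.

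There is no genuine obstacle here: the proof is purely a ``method of types'' bookkeeping argument. The only points deserving a sentence of care are the verification that $\vec{X}_\leq$ is i.i.d.\ (so the exchangeability argument is available in that case as well) and the degenerate case where the prescribed configuration has probability zero, which I would dispatch as indicated above.
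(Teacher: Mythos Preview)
Your proposal is correct and follows essentially the same approach as the paper: both arguments decompose the event $\{L_n(\vec{O})=\nu\}$ as a disjoint union over the type class, use exchangeability of the i.i.d.\ law to see that all terms are equal, and then declare the second identity analogous. Your write-up is slightly more careful (you explicitly note that $\vec{X}_\leq$ is i.i.d.\ and handle the zero-probability edge case), but there is no substantive difference in method.
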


\begin{proof}
We only prove the first formula and the second one is analogous.
Observe that 
\begin{equation*}
    \P(L_n(\vec O)=\nu) = \P(\vec O \in T_n(\nu, \Theta)) = \sum_{ \vec{y} \in T_n(\nu, \Theta)} \P(\vec{O} = \vec{y}).
\end{equation*}
Now, if  $\vec{y}\in T_n(\nu,\Theta)$, then $L_n(\vec y)=\nu = L_n(\theta)$. Thus, there exists a permutation $\sigma^{\vec y} \in S_n$ such that $y_i=\theta_{\sigma^{\vec y}(i)}$. Since $\vec{O}$ has product law, it is exchangeable and we obtain
\begin{equation*}
\P(\vec O = \vec{y}) = \P(\vec{O}=\vec{\theta}),
\end{equation*}
which yields
\begin{equation*}
\P(L_n(\vec O)=\nu) = |T_n(\nu,\Theta)| \cdot \P(\vec O = \vec{\theta}),
\end{equation*}
concluding the proof.
\end{proof}

\subsection{Erd\H{o}s-R\'enyi random graph with i.i.d.\ marks}\label{subset:er}

\par The goal of this section is to prove Theorem~\ref{t:ldp_erg} assuming Theorem~\ref{t:ldp_uniformg}. The idea of the proof is to use that the Erd\H{o}s-R\'enyi graph is equal in distribution to the uniform graph when the number of edges is chosen accordingly to a Binomial distribution. This mixture structure extends naturally when the graphs have i.i.d.\ marks and also for their empirical neighborhood distribution. The main technicality of this section is to apply the results from Appendix~\ref{appendix:mixture_and_LDP}.

Let $G^d_{n}$ be the random marked graph with i.i.d.\ marks when the underlying graph is sampled by $G(n,d/n)$. Recall that for a pair $(x,y)\in \Xi\times\Xi$  we write $(x,y)_\leq=(x,y)$ if $x\leq y$ and $(x,y)_\leq=(y,x)$ otherwise.  For any $\tilde{G}=(H,\vec{\tau},\vec{\xi})\in \mathcal{G}_{\vec{m},\vec{u}}$ with $\|\vec{m}^{(n)}\|=m_n$ we have by definition that
\begin{multline}\label{eq:lawiidER}
\P[G^{d}_n =(H,\vec{\tau},\vec{\xi})]=  \left( \frac{d}{n} \right)^{m_n} \left(1-\frac{d}{n} \right)^{\binom{n}{2}-m_n} \P(O_v= \tau_v\,,\forall\,v\in G_n)\\
\times\P((X_{\{u,v\}})_\leq=(\xi(u,v),\xi(v,u))_\leq\,,\forall \{u,v\}\in G_n)\frac{1}{2^{\sum_{x<\bar x}m(x,\bar x)}}\,.
\end{multline}

In Section~\ref{sec:uniformgraph} we considered a sequence of uniform graphs with fixed number $(m_n)_{n\in \mathbb{N}}$ of edges.  In this section it is convenient to consider a model of uniform graph where the number of edges is itself random. Motivated by that, we denote by $G_{n,m}$ the a marked graph with i.i.d. marks when the underlying graph is sampled from $\mathcal{G}_{n,m}$. Let $M_n$ be a random variable sampled accordingly to $ \textrm{Bin}\left( \binom{n}{2}, d/n \right)$.

\begin{lemma}
$G^{d}_n$ has the same distribution as $G_{n,M_n}$, i.e., we have the following mixture structure
\begin{align*}
    \mathbb{P}(G^{d}_n = G) = \sum_{m=1}^{n(n-1)/2} \mathbb{P} ( G_{n,m} = G ) \cdot \mathbb{P} ( M_n = m ) .
\end{align*}
\end{lemma}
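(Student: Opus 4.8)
The plan is to verify the identity by a direct conditioning argument on the number of edges of the underlying graph. The key observation is that for a fixed graph $G$ with underlying edge set of size $m$, both sides of the claimed identity, once we strip off the mark-dependent factors, reduce to a statement about the unmarked Erd\H os-R\'enyi graph $G(n,d/n)$ versus the uniform random graph on $\mathcal{G}_{n,m}$. More precisely, I would first note that the distribution of the marks (the factors $\P(O_v = \tau_v, \forall v)$, $\P((X_{\{u,v\}})_\leq = \cdots, \forall \{u,v\})$, and $2^{-\sum_{x<\bar x} m(x,\bar x)}$) depends only on the underlying graph $H$ through its edge set, in particular only through $\vec{m}_G$ and $\vec{u}_G$, which in turn determine $m = \|\vec{m}_G\|_1$. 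Hence these factors are \emph{identical} in the expressions~\eqref{eq:lawiidER} for $\P[G^d_n = G]$ and in the analogous expression for $\P[G_{n,m} = G]$ obtained from~\eqref{eq:lawiid}.

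The main step is then the unmarked identity: writing $m = |E(H)|$ for the edge count of the underlying graph $H$ of $G$,
\begin{equation*}
\left(\frac{d}{n}\right)^{m}\left(1-\frac{d}{n}\right)^{\binom{n}{2}-m}
= \frac{1}{|\mathcal{G}_{n,m}|}\cdot \binom{\binom{n}{2}}{m}\left(\frac{d}{n}\right)^{m}\left(1-\frac{d}{n}\right)^{\binom{n}{2}-m}
= \frac{1}{|\mathcal{G}_{n,m}|}\cdot \P(M_n = m),
\end{equation*}
where the first equality uses $|\mathcal{G}_{n,m}| = \binom{\binom{n}{2}}{m}$ (the number of labeled graphs on $[n]$ with exactly $m$ edges) and the second is the definition of the Binomial$\big(\binom{n}{2}, d/n\big)$ law of $M_n$. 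Combining this with the remark that the mark factors agree, I obtain $\P[G^d_n = G] = \frac{1}{|\mathcal{G}_{n,m}|}\cdot\P(M_n = m)\cdot(\text{mark factors}) = \P(G_{n,m} = G)\cdot \P(M_n = m)$ for the unique value $m = |E(G)|$, and this last quantity equals $\sum_{m'=1}^{n(n-1)/2}\P(G_{n,m'} = G)\cdot\P(M_n = m')$ since $\P(G_{n,m'} = G) = 0$ for $m' \neq |E(G)|$.

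I do not anticipate a serious obstacle here; the statement is essentially bookkeeping. The only point requiring a little care is the handling of the edge cases: when $G$ has an edge-mark count vector with $\|\vec{m}_G\|_1 > \binom{n}{2}$ (impossible, so both sides vanish), and the boundary of the summation range (the sum starts at $m=1$, consistent with the convention in the surrounding text that $m_n \geq 1$; the $m=0$ term contributes only to the empty graph and can be included or excluded without changing the empirical-neighborhood analysis). One should also double-check that the orientation factor $2^{-\sum_{x<\bar x}m(x,\bar x)}$ and the i.i.d.\ mark probabilities in~\eqref{eq:lawiidER} are literally the same symbols appearing in~\eqref{eq:lawiid} with $m_n$ replaced by $m$ — which they are, since both models assign marks in exactly the same way conditionally on the underlying graph. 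Hence the mixture identity follows.
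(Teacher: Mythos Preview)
Your proposal is correct and follows essentially the same approach as the paper: multiply and divide the expression~\eqref{eq:lawiidER} by $|\mathcal{G}_{n,m}|$, recognize $|\mathcal{G}_{n,m}|\left(\tfrac{d}{n}\right)^m\left(1-\tfrac{d}{n}\right)^{\binom{n}{2}-m}$ as $\P(M_n=m)$, and use that $\P(G_{n,m'}=G)=0$ for $m'\neq |E(G)|$ to extend to a sum over all $m$. The paper's proof is slightly terser but identical in substance.
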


\begin{proof}
Let $(H,\vec{\tau},\vec{\xi}) \in \mathcal{G}_{\vec{m},\vec{u}}$ and $\|\vec{m}^{(n)}\|=m_n$. In Equation \eqref{eq:lawiidER} we have the distribution of $G^{d}_n$. If we multiply and divide the right-hand side by $|\mathcal{G}_{n,m_n}|$ we obtain
\begin{multline}
\P[G^{d}_n = (H,\vec{\tau},\vec{\xi})]=   |\mathcal{G}_{n,m_n}| \left( \frac{d}{n} \right)^{m_n} \left(1-\frac{d}{n} \right)^{\binom{n}{2}-m_n} \P(O_v= \tau_v\,,\forall\,v\in G_n)\\
\times 
    \frac{1}{|\mathcal{G}_{n,m_n}|} \: \P((X_{\{u,v\}})_\leq=(\xi(u,v),\xi(v,u))_\leq\,,\forall \{u,v\}\in G_n) \frac{1}{2^{\sum_{x<\bar x}m(x,\bar x)}}\,.
\end{multline}

But, observe that $ |\mathcal{G}_{n,m_n}| \left( \frac{d}{n} \right)^{m_n} (1-\frac{d}{n})^{\binom{n}{2}-m_n}$ is exactly the probability that $M_n=m_n$. Therefore,
\begin{align*}
    \mathbb{P} \big( G^{d}_n = (H,\vec{\tau},\vec{\xi}) \big) = \mathbb{P} ( M_n = m_n ) \mathbb{P} ( G_{n,m_n} = (H,\vec{\tau},\vec{\xi}) ).
\end{align*}
By convention,  $ \mathbb{P} ( G_{n,m} = G ) = 0$ if $m \neq m_G$ and the final formula holds.
\end{proof}

The result above, in particular, gives us the mixture structure
\begin{equation}
    \mathbb{P}(U(G^{d}_n) \in A) = \int_{\gamma \in \Gamma_n} \mathbb{P}( U(G_{n, \frac{1}{2}\gamma n}) \in A) \: d \mu^n(\gamma),
\end{equation}
where $\Gamma_n = \{0,2/n, 4/n, \dots, n(n-1)/n\}$ and $\mu^n$ is the law of $2M_n/n$.

To obtain the desired large deviation principle we use Theorem~\ref{t:mixture}. The first step to apply that theorem is to check that $\mu_n$ satisfies a large deviation principle. Indeed, from Gartner-Ellis Theorem we have the following result.
\begin{lemma}
The sequence $(\mu_n)_{n\in\mathbb{N}}$ satisfies a large deviation principle in the space $[0, \infty)$ with speed $n$ and good rate function 
\[ \psi(x) = \frac{1}{2} \Big( x \log \frac{x}{d}- x +d \Big)   .\]
\end{lemma}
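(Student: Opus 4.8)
The plan is to apply the G\"artner--Ellis theorem, as the statement already suggests. Write $X_n := 2M_n/n$, so that $\mu_n$ is the law of $X_n$, and recall that $M_n$ is a sum of $\binom{n}{2}$ independent $\ber(d/n)$ random variables. Hence, for every $\lambda \in \R$,
\begin{equation*}
\E\big[e^{n\lambda X_n}\big] = \E\big[e^{2\lambda M_n}\big] = \Big( 1 - \frac{d}{n} + \frac{d}{n}e^{2\lambda} \Big)^{\binom{n}{2}} = \Big( 1 + \frac{d(e^{2\lambda}-1)}{n} \Big)^{\binom{n}{2}},
\end{equation*}
and therefore
\begin{equation*}
\Lambda_n(\lambda) := \frac{1}{n}\log\E\big[e^{n\lambda X_n}\big] = \frac{n-1}{2}\log\Big( 1 + \frac{d(e^{2\lambda}-1)}{n} \Big) \longrightarrow \frac{d}{2}\big(e^{2\lambda}-1\big) =: \Lambda(\lambda)
\end{equation*}
as $n \to \infty$, the limit being finite for every $\lambda \in \R$.

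Next I would verify the hypotheses of the G\"artner--Ellis theorem (see, e.g.,~\cite[Theorem~2.3.6]{dembo_zeitouni}). The function $\Lambda$ is finite on all of $\R$, so the origin lies in the interior of its effective domain; $\Lambda$ is differentiable on $\R$; and since its effective domain is all of $\R$, the steepness condition is vacuous, so $\Lambda$ is essentially smooth. Consequently $(X_n)_{n\in\N}$, regarded as $\R$-valued random variables, satisfies a large deviation principle with speed $n$ and good rate function $\Lambda^*(x) = \sup_{\lambda\in\R}\{\lambda x - \Lambda(\lambda)\}$. A direct computation of this Legendre transform --- for $x > 0$ the supremum is attained at $\lambda = \tfrac12\log(x/d)$, giving $\tfrac12\big(x\log(x/d) - x + d\big)$; for $x = 0$ one lets $\lambda \to -\infty$ and obtains $d/2$, consistent with the preceding formula under the convention $0\log 0 = 0$; and for $x < 0$ the supremum is $+\infty$ --- shows that $\Lambda^*$ restricted to $[0,\infty)$ coincides with $\psi$.

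It then remains to transfer the large deviation principle to the space $[0,\infty)$ and to confirm that $\psi$ is good. Since $\mu_n$ is supported on $[0,\infty)$ for every $n$ and $[0,\infty)$ is closed in $\R$, the large deviation principle for $(X_n)$ on $\R$ restricts to one on $[0,\infty)$ with rate function $\psi$. Finally, $\psi$ is convex and lower semicontinuous, being a Legendre transform, and $\psi(x)/x \to \infty$ as $x \to \infty$ since the term $x\log x$ dominates; hence the sublevel sets $\{\psi \le c\}$ are bounded, thus compact in $[0,\infty)$, so $\psi$ is a good rate function. There is essentially no real obstacle in this argument; the only points deserving a little care are the verification that the effective domain of $\Lambda$ equals $\R$ --- which is what yields essential smoothness and hence the full (rather than merely weak) large deviation principle --- and the observation that a large deviation principle on $\R$ restricts to the closed subset $[0,\infty)$.
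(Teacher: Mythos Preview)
Your proof is correct and follows the same approach as the paper, which simply states that the result is a consequence of the G\"artner--Ellis theorem without supplying any of the details. You have filled in precisely those details: the computation of the limiting logarithmic moment generating function, the verification of essential smoothness, the evaluation of the Legendre transform, and the restriction to $[0,\infty)$.
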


In particular, the sequence $\mu_n$ is exponentially tight and it satisfies the $\mu$-assumptions (cf. Appendix~\ref{appendix:mixture_and_LDP}).

The next step is to verify that the $\Gamma$-assumptions hold. Following the notation of Appendix~\ref{appendix:mixture_and_LDP} we set
\begin{equation*}
    P^{n}_{\gamma_n}(\,\cdot\,) =  \mathbb{P}( U(G_{n, \frac{1}{2}\gamma_n n}) \in \,\cdot\,)\,.
\end{equation*}
Let 
\begin{equation*}
    \Tilde{\Gamma} = \{ \gamma \in \Gamma : \: \exists\, \gamma_n \in \Gamma_n \: \textrm{with} \: \gamma_n \to \gamma\}.
\end{equation*}
 From Theorem~\ref{t:ldp_uniformg}, if $\gamma_n \to \gamma$, the sequence of random variables $U(G_{n, \frac{1}{2}\gamma_n n})$ satisfies a large deviation principle with rate function $I_\gamma$ defined in~\eqref{eq:rate_function_uniform_graph_2}.

As consequence of Theorem~\ref{t:mixture}, the sequence of empirical neighborhood distributions $(U(G_n^{d}))_{n\in\mathbb{N}}$ satisfies a large deviation principle with rate function given by
\begin{equation}
\begin{split}
     \lambda(\mu)& = \inf\{ I_{\gamma}(\mu) + \psi(\gamma) : \gamma \in \Gamma\}\\
     &= I_{\deg(\mu)}(\mu) + \psi(\deg(\mu))\,,
\end{split}
\end{equation}
since $I_{\gamma}(\mu) = \infty$ if $\gamma\neq\deg(\mu)$.

\section{Determining the minimizers}\label{sec:minimizers}

\par In this section, we examine the minimizers of the rate functions. We start by examining minimizers of the rate function for the DA model and obtain expressions for the progressively more complicated models.

\subsection{Minimizers for $I_{\vec{d},Q}$}\label{subsec:existence}

\par In this section we verify Proposition~\ref{prop:minimizer}, the main step towards determining minimizers for the other rate functions obtained.

Recall that a minimizer of the rate function is a probability distribution for which $I_{\vec{d},Q}$ attains the value zero. By the definition of $I_{\vec{d},Q}$ (see Equation~\eqref{eq:I_d_Q}), $\PP^{*}_{d}$ will be a minimizer if $H(Q) +s(\vec{d}) = \Sigma_{\vec{d},Q}(\PP^{*}_{d})$. In view of Proposition~\ref{prop:entropyUGW}, it suffices to prove that $J_1(\PP^*_{d})=H(Q)+s(\vec d)$. This is what we in fact verify in the proof of Proposition~\ref{prop:minimizer}.

\begin{proof}[Proof of Proposition~\ref{prop:minimizer}]
As we already mentioned, Proposition~\ref{prop:minimizer} follows from
\begin{equation}\label{eq:P*minimum}
J_1(\PP^*_{d})=H(Q)+s(\vec d).
\end{equation} 
Recall $E_h (t,t' )$ from~\eqref{eq:E_1}. Equation~\eqref{eq:J_h} yields
\begin{equation*}
J_1(\PP^*_{d})= -s(d) + H(\PP^*_{d}) -\frac{d}{2}H(\pi_{\PP^*_{d}} )-\sum_{t,t' \in \Xi\times\bar{\mathcal{T}}^{0}_*}\EE_{\PP^{*}_{d}}(\log E_h (t,t' )!).
\end{equation*}
Thus, our claim follows if we verify
\begin{align}
-\frac{d}{2}H(\pi_{\PP^*_{d}})&=-dH(Q)-s(\vec{d})+s(d),\label{first:id}\\
H(\PP^*_{d})-\sum_{t,t' \in \Xi\times\bar{\mathcal{T}}^{0}_*}
\EE_{\PP^*_{d}}(\log E_1 (t,t' )!)&=H(Q)+dH(Q)+2s(\vec{d}).\label{second:id}
\end{align}

We first check~\eqref{first:id}. Recall that we identify the set $\mathcal{T}^{0}_{*}$ with $\Theta$ and the definition of $\pi_{\PP^{*}_{d}}$ in~\eqref{eq:pi_P}. From the definition of $\PP^*_{d}$ we obtain
\begin{align*}
-\frac{d}{2}H(\pi_{\PP^*_{d}})= & \sum_{\substack{\theta,\theta'\\x,x'}}Q(\theta)Q(\theta')d_{x,x'}\log Q(\theta)+\frac{1}{2}\sum_{\substack{\theta,\theta'\\x,x'}}Q(\theta)Q(\theta')d_{x,x'}\log d_{x,x'}\\
& -\frac{1}{2}\sum_{\substack{\theta,\theta'\\x,x'}}Q(\theta)Q(\theta')d_{x,x'}\log d.
\end{align*}
The definition of $s(\vec{d})$ and $s(d)$ in Equation~\eqref{def:s} and the fact that $\sum_{x,x'} d_{x,x'}=d$ implies~\eqref{first:id}.

We now aim to check~\eqref{second:id}. We first write $H(\PP^*_{d})$ as the sum of $H(Q)$ with another expression. The first step is to split the set of rooted trees accordingly to the mark of the root:
\begin{align*}
H(\PP^*_{d})=&-\sum_\theta\sum_{t:\theta_t=\theta}\PP^*_{d}(t)\log \Big( Q(\theta)\prod_{\theta',x,x'}{\rm Poi}(\alpha_{x,x'}^{\theta'})(N_{x,x'}^{\theta,\theta'}(t))\Big)\\
=&H(Q)-\sum_\theta\sum_{t:\theta_t=\theta}\PP^*_{d}(t)\sum_{\theta',x,x'}\log {\rm Poi}(\alpha_{x,x'}^{\theta'})(N_{x,x'}^{\theta_t,\theta'}(t)).
\end{align*}

We also have,
\begin{equation*}
-\sum_{\tilde{t},t' \in \Xi\times\bar{\mathcal{T}}^{0}_*}\EE_{\PP^*_{d}}(\log E_1 (\tilde{t},t' )!)=-\sum_\theta
\sum_{t:\theta_t=\theta}\PP^*_{d}(t)\sum_{\substack{\theta',x,x'}}\log \Big( N_{x,x'}^{\theta,\theta'}(t)! \Big).
\end{equation*}
Therefore,
\begin{align*}
H(\PP^*_{d}) &-
\sum_{t,t' \in \Xi\times\bar{\mathcal{T}}^{0}_*} \EE_{\PP^*_{d}}(\log E_1 (t,t' )!)=\\
&=H(Q)-\sum_\theta
\sum_{t:\theta_t=\theta}\PP^*_{d}(t)\sum_{\substack{\theta',x,x'}}\log \Big( {\rm Poi}(\alpha_{x,x'}^{\theta'})(N_{x,x'}^{\theta,\theta'}(t)) \cdot N_{x,x'}^{\theta,\theta'}(t)! \Big)\\
&=H(Q)-\sum_\theta
\sum_{t:\theta_t=\theta}\PP^*_{d}(t)\sum_{\substack{\theta',x,x'}}\bigg(-\alpha_{x,x'}^{\theta'}+N_{x,x'}^{\theta,\theta'}(t)\log \alpha_{x,x'}^{\theta'}\bigg).
\end{align*}
We now use the fact that $d=\sum_{x,x'} d_{x,x'}$ to obtain
\begin{align*}
H(\PP^*_{d}) - \sum_{t,t' \in \Xi\times\bar{\mathcal{T}}^{0}_*} & \EE_{\PP^*_{d}}(\log E_1 (t,t' )!) \\
& =H(Q)+d-\sum_{\substack{\theta,\theta'\\x,x'}}Q(\theta)Q(\theta')d_{x,x'}\log Q(\theta')d_{x,x'}\\
& =H(Q)+d+dH(Q)-\sum_{x,x'}d_{x,x'}\log d_{x,x'}\\
& =H(Q)+dH(Q)+2s(\vec{d}).
\end{align*}
This concludes the verification of~\eqref{second:id} and implies that the marked unimodular Galton-Watson with root $\PP^{*}_{d}$ is a minimizer of the rate function $I_{\vec{d},Q}$ from Theorem~\ref{t:ldp}.
\end{proof}

\subsection{Minimizers for the discrete case}\label{subsec:minimizers_discrete}

\par We are now in position to establish the expression of the minimizer for the rate functions of the large deviation principle satisfied by the marked uniform random graph and the marked sparse Erd\H{o}s-R\'enyi random graph when the spaces of marks $\Theta$ and $\Xi$ are finite. This case is rather simple, since we obtain explicit expressions for the rate functions.

\begin{proof}[Proof of Proposition~\ref{prop:minimizer_discrete}]
Let us first verify that the unimodular Galton-Watson tree with seed $\PP^{*}_{\nu, \chi, d}$ is a minimizer of the rate functions $I^{u}_{d}$. Recall from~\eqref{eq:rate_function_uniform_graph_2} that $I^{u}_{d}(\mu)$ is finite only if $\deg(\mu)=d$ and that, in this case,
\begin{equation}
I^{u}_{d}(\mu)=I_{P(\vec{\deg}(\mu)), \Pi(\mu)}(\mu) + \frac{d}{2}H \big( P(\vec{\deg}(\mu)) \big| \chi_\leq \big) + H(\Pi(\mu)|\nu).
\end{equation}
From this we see that the distribution of the vertex marks should be $\nu$ and that the distribution of the (non-oriented) edge marks should be $\chi_{\leq}$ in order to obtain a minimizer. This last requirement is verified if we assume that marks are attributed with distribution $\chi$. Furthermore, Proposition~\ref{prop:minimizer} yields that the unimodular Galton-Watson tree with seed $\PP^{*}_{\nu, \chi, d}$ is a minimizer of the rate function $I_{\chi_{\leq},\, \nu}$. This concludes the proof of the first case.

In the case of the sparse Erd\H{o}s-R\'enyi random graph, the rate function is given by
\begin{equation}
I_{d}^{\rm ER}(\mu)=I^u_{\deg(\mu)}(\mu)+\frac{1}{2}\Big( \deg(\mu) \log \frac{\deg(\mu)}{d}- \deg(\mu) +d \Big).
\end{equation}
From this, any minimizer should satisfy $\deg(\mu)=d$. The result then follows from the case of the uniform random graph.
\end{proof}

\subsection{Minimizers for the general case}\label{subsec:minimizers_general}

\par We finalize this section considering the general case when the spaces $\Theta$ and $\Xi$ are Polish spaces. Notice that in this case we obtain the rate function through approximations and we use this fact to establish the existence of minimizers.

\begin{proof}[Proof of Proposition~\ref{prop:minimizer_general}]
In the case of the uniform random graph, the rate function is given by
\begin{equation}
\bar{I}_d^{u}(\mu) = \limsup_{\delta \to 0} \limsup_{k \to \infty} \inf_{ \nu \in B_{\delta}(\mu) } I_d^{u,k}(\nu).
\end{equation}

Furthermore, since the marked unimodular Galton-Watson tree with seed $\PP^{*}_{\nu^{k}, \chi^{k}, d}$ is a minimizer of the rate functions $I^{u,k}_{d}$, it suffices to verify that
\begin{equation}
{\rm UGWT}(\PP^{*}_{\nu^{k}, \chi^{k}, d}) \to {\rm UGWT}(\PP^{*}_{\nu, \chi, d}),
\end{equation}
as $k$ grows. The same applies for the minimizer of $\bar{I}_d^{\rm ER}$.

Let us now verify the convergence above. Let $t$ be a random rooted tree distributed as ${\rm UGWT}(\PP^{*}_{\nu, \chi, d})$ and obtain $t_{k}$ with distribution $ {\rm UGWT}(\PP^{*}_{\nu^{k}, \chi^{k}, d})$ from $t$ by projecting the marks on edges and vertices. We have
\begin{equation}
\d_{BL}( {\rm UGWT}(\PP^{*}_{\nu^{k}, \chi^{k}, d}), {\rm UGWT}(\PP^{*}_{\nu, \chi, d})) \leq \E \big( \d_{\bar{\mathcal{G}}_{*}}(t, t_{k}) \wedge 2 \big).
\end{equation}
We split in the cases where the distance is smaller than $2/k$ or not. When the distance is not smaller we bound the expectation by 2 times the respective probability. 
The proof will then be completed if we prove that
\begin{equation}\label{eq:limit_equals_zero}
\lim_{k} \P\Big( \d_{\bar{\mathcal{G}}_{*}}(t, t_{k}) > \frac{2}{k} \Big) =0.
\end{equation}

In order to verify the equation above, notice that, if $\d_{\bar{\mathcal{G}}_{*}}(t, t_{k}) > \frac{2}{k}$ then there exists a vertex or an edge that is at distance at most $k$ from the root $o$ whose corresponding mark belongs to $A^{*}$ or $B^{*}$, respectively (recall these sets where introduced in Lemma~\ref{lemma:distace_estimate}). Consider then the event
\begin{equation}
E_{k} = \Big\{ \begin{array}{c}
\text{there exists a vertex or an edge that is at distance at most } k \\
\text{from the root } o \text{ with mark in } A^{*} \text { or } B^{*}
\end{array}
\Big\}.
\end{equation}
Union bound immediately implies
\begin{equation}\label{eq:estiamte_galton_watson}
\begin{split}
\P(E_{k}) & \leq \P \bigg( |[t,o]_{k}| \geq \sum_{\ell=0}^{k} k^{\ell} \bigg) + (\nu(A^{*})+\xi(B^{*}))\sum_{\ell=0}^{k} k^{\ell} \\
& \leq \sum_{\ell=1}^{k}\P \big( |[t,o]_{\ell} \setminus [t,o]_{\ell-1}| \geq k^{\ell} \big) + 2e^{-k^2}(k+1)k^{k} \\
& \leq \sum_{\ell=1}^{k} \frac{\E\big( |[t,o]_{\ell} \setminus [t,o]_{\ell-1}| \big)}{k^{\ell}} + 2e^{-k^2}(k+1)k^{k} \\
& = \sum_{\ell=1}^{k}\frac{d^{\ell}}{k^{\ell}} + 2e^{-k^2}(k+1)k^{k} \leq \frac{d}{k-d}+2e^{-k^2}(k+1)k^{k}
\end{split}
\end{equation}

Since the right-hand side of the expression above converges to zero as $k$ increases, we obtain~\eqref{eq:limit_equals_zero}. In particular, this implies, for each $\delta>0$,
\begin{equation}
\limsup_{k \to \infty} \inf_{ \nu \in B_{\delta}({\rm UGWT}(\PP^{*}_{\nu, \chi, d})) } I_d^{u,k}(\nu) =0,
\end{equation}
which implies that ${\rm UGWT}(\PP^{*}_{\nu, \chi, d})$ is indeed a minimizer of $\bar{I}_d^{u}$. The proof of this fact for $\bar{I}_d^{\rm ER}$ follows exactly the same lines.
\end{proof}

\section{Application - Large deviation principle for interacting diffusions on sparse marked graphs}
\label{sec:idg}

In this section we obtain a large deviation principle for interacting diffusions on sparse marked graphs, which provides an example of application of Theorems~\ref{t:ldp_uniformg_general} and~\ref{t:ldp_erg_general}.

We divide this section in the following steps:
\begin{enumerate}
\item We introduce the marked graphs with the elements necessary to define an interacting diffusion: initial conditions, media variables, strength of interactions.
\item We introduce the interacting diffusions in the special case of gradient evolution.
\item We  define the marked graph that includes these interacting diffusions.
\item We finish the section with the statement of the main theorem and the organization of the proof.
\end{enumerate}

\subsubsection{The network}\label{sec:intdiff:network}
Let $G=(V,E,\vec{\xi},\vec{\omega},\vec{\theta}(0))$ be a finite marked graph where:
\begin{itemize}
\item $\vec{\xi}=(\xi_{u,v};\,\{u,v\}\in E)\in \R^{2|E|}$ are marks on the oriented edges. We can interpret $\xi_{u,v}$ as the strength of interaction associated to the oriented edge $(u,v).$
\item $\vec{\omega}=(\omega_{v};\,v\in V)\in \R^{|V|}$ are marks on vertices that represent ``media" variables.
\item $\vec{\theta}(0)=(\theta_{v}(0);\,v\in V)\in \R^{|V|}$ are marks on vertices. They are the initial conditions of the interacting diffusion.
\end{itemize}

\subsubsection{The interacting diffusions}
Besides the marked graph $G=(V,E,\vec{\xi},\vec{\omega},\vec{\theta}(0))$, we fix two functions $f$ and $g$ that play the role of pair potential and external field. We define the Hamiltonian, which depends on $G$, by
\begin{equation}\label{eq:hamiltonian}
H_{G}(\vec{x})=\sum_{v,u\in V}\xi_{v,u}f(x_{v}-x_{u};\,\omega_v,\omega_u)+\sum_{v\in V}g(x_v;\omega_v).
\end{equation}
Observe that we do not normalize the Hamiltonian and this is compatible with our sparsity assumptions on $G$.
We write $\partial_{v}H_{G}$ for the derivative of $H_{G}$ with respect to the $x_{v}$ variable.
We fix a finite time horizon $T>0$ and let $(B_v;\,v\in V)$ be i.i.d. standard Brownian motions defined on the time interval $[0,T].$
The interacting diffusions over $G$ is the stochastic process $\vec{\theta}^{G}=(\theta^{G}_{v}(t);\,v\in V,\,t\in [0,T])$ which solves the following system of It\^o stochastic differential equations
\begin{equation}\label{eq:sde}
\begin{cases}
{\rm d}\theta^{G}_{v}(t)&=\partial_{v}H_G(\vec{\theta}^{G},\vec{\xi},\vec{\omega}){\rm d}t+ {\rm d}B_{v}(t),\,\,0\leq t\leq T,\\
\theta^{G}_{v}(0)&=\theta_{v}(0),\, \forall\,v\in V.
\end{cases}
\end{equation}

We assume that $f$, $f'$ (derivative with respect to the $x$ variable), $f''$, $g$, $g'$, $g''$ exist, are bounded and continuous. Under this assumptions, when the marked graph $G$ is finite, the system  (\ref{eq:sde}) has a unique strong solution with continuous trajectories (see \cite[Chapter~5, Theorem~2.9]{karatzas2012brownian}).

\subsubsection{The new network}

We include the solutions of the above system of SDE's as new marks of our original marked graph. For that reason we define $\overline{\mathcal{G}}_{*}^{0}$ for the space of marked graphs with the marks including $C([0,T];\R)$. More specifically, $\overline{\mathcal{G}}_{*}^{0}$ is the set of rooted marked graphs of the form
\begin{equation}\label{eq:mrg_diffusion}
(G\otimes \vec{x},o):=(V,E,\vec{\xi},\vec{\omega},\vec{\theta}(0),\vec{x}_T,o),
\end{equation}
where $\vec{x}_T=(x_v;\,x_v\in C([0,T];\R),\,v\in V)$ is a field of continuous functions indexed by the vertices of $G$.  As a particular example, consider $\vec{x}_T=\vec{\theta}^{G}_T$.

 Let $(G_n)_{n\in \N}$ be a sequence of finite random marked graphs of the form described in Section~\ref{sec:intdiff:network} with vertex set given by $[n]$. For each $n\in\mathbb{N}$ let $\vec{B}^{(n)}=(B^{(n)}_v)_{v\in [n]}$ be i.i.d.\ Brownian motion on the time interval $[0,T]$

\begin{theorem}\label{thm:intdiff} Assume that $(U(G_n\otimes \vec{B}^{(n)});\,n\in\N)$ satisfies a large deviation principle with rate function $I:\mathcal{P}(\overline{\mathcal{G}}_*^{0})\to [0,+\infty].$ Then the annealed law of the sequence $(U(G_n\otimes \vec{\theta}^{G_n});\,n\in \N)$ satisfy a large deviation principle with rate function $I(\rho)-F(\rho)$ where $F(\rho)$ is given by
\begin{equation}\label{def:F}
F(\rho)=-\int \bigg(F^{(1)}_T-F^{(1)}_0+\frac{1}{2}F^{(2)}+\frac{1}{2}F^{(3)}\bigg){\rm d}\rho\,,
\end{equation}
where the functions $F^{(1)}_t,F^{(2)},F^{(3)}:\overline{\mathcal{G}}_*^{0}\to \R$, $t\in \{0,T\}$, are defined by, for $G=(V,E,\vec{\xi},\vec{\omega},\vec{x},o)$,
\begin{equation}\label{def:Fi}
\begin{split}
F^{(1)}_t(G)=\sum_{w;w\sim o}& \xi_{o,u}f(x_{o}(t)-x_{u}(t);\,\omega_o,\omega_u)+g(x_o(t);\omega_o),\\
F^{(2)}(G)=\int_{0}^{T}\bigg[&\sum_{u;u\sim o}\xi_{ou}f'(x_o(t)-x_u(t);\omega_o,\omega_u)\\
&-\sum_{u;u\sim o}\xi_{uo}f'(x_u(t)-x_o(t);\omega_o,\omega_u)+g'(x_o(t);\omega_o)\bigg]^2{\rm d}t,\\
F^{(3)}(G)=\int_{0}^{T}\bigg[&\sum_{u;u\sim o}\xi_{ou}f''(x_o(t)-x_u(t);\omega_o,\omega_u)\\
&+\sum_{u;u\sim o}\xi_{uo}f''(x_u(t)-x_o(t);\omega_o,\omega_u)+g''(x_o(t);\omega_o)\bigg]{\rm d}t.
\end{split}
\end{equation}
\end{theorem}

This result is the analog of \cite[Theorem~1]{dpdh} which considers the mean-field case. The strategy of the proof is also similar: we write the law of $U(G_n\otimes \vec{\theta}^{G_n})$ as an exponential tilt with respect to the law of $(U(G_n\otimes \vec{B}^{(n)});\,n\in\N)$. The result follows from an application of Varadhan's Lemma assuming that $(U(G_n\otimes \vec{B}^{(n)});\,n\in\N)$ satisfies a large deviation principle. See the details in Section~\ref{sec:proof:intdiff}.

\begin{remark} Consider a sequence $G_n=(V_n,E_n,\vec{\xi}^{(n)},\vec{\omega}^{(n)},\vec{\theta}^{(n)}(0))$ where
\begin{enumerate}
\item $(V_n,E_n)$ is distributed either as the Erd\H{o}s-R\'enyi graph or the uniform random graph as in Section~\ref{sec:model}.
\item the components of $\vec{\xi}^{(n)}$, $\vec{\omega}^{(n)}$, and $\vec{\theta}^{(n)}(0)$ are i.i.d.\ random variables with fixed distributions.
\end{enumerate}
Theorems~\ref{t:ldp_uniformg_general} and~\ref{t:ldp_erg_general} imply that $(U(G_n\otimes \vec{B}^{(n)});\,n\in\N)$ satisfies a large deviation principle. This gives examples where Theorem~\ref{thm:intdiff} can be applied.
\end{remark}

\subsection{Proof of Theorem~\ref{thm:intdiff}}\label{sec:proof:intdiff}

The main preliminary result necessary to prove Theorem~\ref{thm:intdiff} is the following lemma that compares Radon-Nikodym derivatives of marked graphs with marks given by idependent Brownian motions and the solutions of the SDE~\eqref{eq:sde}.

Let $(G_n;\,n\in \N)$ be the sequence of random networks as in~\eqref{sec:intdiff:network} and assume that the vertex set of $G_n$ is $[n]$. We write $\PP_n^{G}$ to denote the law of the solutions $\vec{\theta}^{G_n}\in C([0,T];\R^{n})$ of~\eqref{eq:sde}, conditioned on the law of $G_n$. Write $W^{\otimes n}$ for the law of the i.i.d. standard Brownian motions $(B_v;\,v\in [n])\in C([0,T];\R^{n})$.

Following~\cite{dpdh}, Theorem~\ref{thm:intdiff} is a consequence of Varadhan's Lemma together with the following result.
\begin{lemma}\label{lem:radomnykodym}
Conditioned on $G_n$,
\begin{equation}\label{eq:exptilt}
\frac{\d \PP^G_n}{\d W^{\otimes n}}(G_n \otimes \vec{x})=\exp(nF(U(G_n\otimes \vec{x}))),
\end{equation}
where, for $\rho \in \mathcal{P}(\overline{\mathcal{G}}_{*}^{0})$, $F(\rho)$ is given by~\eqref{def:F}. Furthermore, under our assumptions on $f$ and $g$, the function $F$ is a bounded and continuous.
\end{lemma}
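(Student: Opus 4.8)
The plan is to obtain~\eqref{eq:exptilt} by a Girsanov--It\^o computation conditioned on a realization of $G_n$. Since $G_n$ has the finite vertex set $[n]$ and $f'$, $g'$ are bounded, the drift coefficients $\vec{x}\mapsto\partial_v H_{G_n}(\vec{x}(t))$ in the system~\eqref{eq:sde} are bounded functionals of the path, so Novikov's condition holds trivially and
\begin{equation*}
Z_t(\vec{x}):=\exp\Big(\sum_{v\in[n]}\Big[\int_0^t \partial_v H_{G_n}(\vec{x}(s))\,{\rm d}x_v(s)-\tfrac12\int_0^t\big(\partial_v H_{G_n}(\vec{x}(s))\big)^2\,{\rm d}s\Big]\Big)
\end{equation*}
is a genuine martingale under $W^{\otimes n}$. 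By Girsanov's theorem the tilted measure $Z_T\,W^{\otimes n}$ is exactly the law of the solution of~\eqref{eq:sde}, i.e.\ $\PP^G_n$, so $\tfrac{{\rm d}\PP^G_n}{{\rm d}W^{\otimes n}}(G_n\otimes\vec{x})=Z_T(\vec{x})$; what remains is to rewrite $\log Z_T$ as a pathwise functional of the empirical neighbourhood measure.

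For that step I would apply It\^o's formula to $t\mapsto H_{G_n}(\vec{x}(t))$. Under $W^{\otimes n}$ one has $\langle x_v\rangle_t=t$, hence
\begin{equation*}
\sum_{v}\int_0^T\partial_v H_{G_n}(\vec{x}(s))\,{\rm d}x_v(s)=H_{G_n}(\vec{x}(T))-H_{G_n}(\vec{x}(0))-\tfrac12\sum_{v}\int_0^T\partial_{vv}H_{G_n}(\vec{x}(s))\,{\rm d}s .
\end{equation*}
Substituting this into $\log Z_T$ and regrouping the terms vertex by vertex, the contribution of a vertex $v$ consists of the $v$-th summand of $H_{G_n}$ (see~\eqref{eq:hamiltonian}) evaluated at times $T$ and $0$, minus one half of the time integral of $\partial_{vv}H_{G_n}$ and minus one half of the time integral of $(\partial_v H_{G_n})^2$. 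Differentiating~\eqref{eq:hamiltonian} in $x_v$ once and twice, one recognizes these summands as exactly $F^{(1)}_T$, $F^{(1)}_0$, $F^{(2)}$ and $F^{(3)}$ from~\eqref{def:Fi}, evaluated at the rooted marked graph $(G_n\otimes\vec{x},v)\in\overline{\mathcal{G}}_*^{0}$. Since $\tfrac1n\sum_v\delta_{[G_n\otimes\vec{x},v]}=U(G_n\otimes\vec{x})$, summing over $v$ turns $\log Z_T$ into $n$ times an integral against $U(G_n\otimes\vec{x})$, which is precisely $nF(U(G_n\otimes\vec{x}))$ for $F$ as in~\eqref{def:F} (the factor $1/\sigma^2$ being $1$ for the unit-noise equation~\eqref{eq:sde}; one retains $\sigma$ for the general-noise version). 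The delicate part here is the bookkeeping in this last identification, in particular tracking all the signs coming out of It\^o's formula and of the two differentiations of $H_{G_n}$.

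Finally, for the stated properties of $F$: each $F^{(i)}$ depends on $(G\otimes\vec{x},o)$ only through the depth-one neighbourhood of the root $o$ (the mark on $o$, the marks on the incident edges, the marks on the neighbours of $o$) together with the trajectories $x_o$ and $x_u$, $u\sim o$, which enter either pointwise at $t\in\{0,T\}$ or through a time integral. Continuity of $f,f',f'',g,g',g''$ and of the maps $x\mapsto x(t)$, $x\mapsto\int_0^T\Phi(x(t))\,{\rm d}t$ on $C([0,T];\R)$ then makes $F^{(1)}_t,F^{(2)},F^{(3)}$ continuous local functionals for the local topology on $\overline{\mathcal{G}}_*^{0}$, and the uniform bounds on $f,f',f'',g,g',g''$ give boundedness; consequently $F(\rho)=-\tfrac1{\sigma^2}\int\big(F^{(1)}_T-F^{(1)}_0+\tfrac12F^{(2)}+\tfrac12F^{(3)}\big)\,{\rm d}\rho$ is bounded and continuous for weak convergence on $\mathcal{P}(\overline{\mathcal{G}}_*^{0})$, which is exactly what Varadhan's lemma needs in the proof of Theorem~\ref{thm:intdiff}. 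I expect the locality/bookkeeping step of the second paragraph to be the only real obstacle; the martingale property, the It\^o identity, and the continuity/boundedness claims are routine given the standing assumptions on $f$ and $g$.
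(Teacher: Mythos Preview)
Your derivation of~\eqref{eq:exptilt} follows exactly the paper's two-step scheme---Girsanov's formula followed by It\^o's formula applied to $t\mapsto H_{G_n}(\vec x(t))$, then identification of the resulting vertex sums with integrals against $U(G_n\otimes\vec x)$---and you even supply more detail than the paper on why Girsanov applies (Novikov) and on how continuity of the $F^{(i)}$ follows from locality in the marked-graph topology.

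There is, however, a gap in your boundedness argument (which the paper's proof also elides): the functionals $F^{(1)}_t$, $F^{(2)}$, $F^{(3)}$ in~\eqref{def:Fi} involve sums over the neighbours $u\sim o$ of the root, weighted by the edge marks $\mu_{o,u}\in\R$, and neither the root degree nor the $\mu_{o,u}$ are bounded on $\overline{\mathcal G}_*^{0}$. Hence the uniform bounds on $f,f',f'',g,g',g''$ alone do \emph{not} make the $F^{(i)}$ bounded functionals (in fact $F^{(2)}$ can grow like the square of the degree times the square of the edge marks), and consequently $F(\rho)=\int(\cdots)\,{\rm d}\rho$ is not bounded on all of $\mathcal P(\overline{\mathcal G}_*^{0})$ from these hypotheses alone. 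A rigorous application of Varadhan's lemma in Theorem~\ref{thm:intdiff} therefore needs an additional ingredient---either an assumption that the edge marks are bounded and the degree has suitable moments under every $\rho$ with finite rate, or a verification of the exponential-moment tail condition in Varadhan's lemma---which neither your sketch nor the paper's proof supplies.
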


\begin{proof}[Proof of Lemma~\ref{lem:radomnykodym}]
In order to establish the lemma, it suffices to verify the following steps:
\begin{enumerate}
\item Calculate the Radon-Nikodym derivative $\frac{\d \PP_n^G}{\d W^{\otimes n}}$ using Girsanov's Theorem.
\item Obtain~\eqref{eq:exptilt} via It\^o's formula.
\end{enumerate} 

We write $H_n:=H_{G_n}$ for the Hamiltonian defined in \eqref{eq:hamiltonian}.
Aplying Girsanov's formula (see~\cite[Appendice A]{bpr} and~\cite[Equation 2.2]{dpdh}) we obtain
\begin{equation}
\frac{\d \PP^G_n}{\d W^{\otimes n}}(G_{n} \otimes \vec{x}_T)=\exp\bigg[- \frac{1}{\sigma^2}\sum_{v=1}^n\int_{0}^{T}\partial_vH_n(\vec{x}(t)) \d x_v(t) - \frac{1}{2\sigma^2}\sum_{v=1}^n\int_0^T\big(\partial_vH_n(\vec{x}(t))\big)^2 \d t\bigg].
\end{equation}
As consequence of It\^o's formula we write
\begin{equation}
\begin{split}
\sum_{v=1}^n\int_{0}^{T}\partial_v H_n(\vec{x}(t)) \d x_v(t)= H_n(\vec{x}(T))-H_n(\vec{x}(0))-\frac{1}{2}\sum_{v=1}^n\int_{0}^{T} \partial^{2}_{vv}H_n(\vec{x}(t)) \d t.
\end{split}
\end{equation}
Therefore,
\begin{equation}
\begin{split}
\frac{\d \PP^G_n}{\d W^{\otimes n}}(G_n\otimes \vec{x})& =\exp\bigg[-\frac{1}{\sigma^2}\big[H_n(\vec{x}(T)) - H_n(\vec{x}(0))\big] \\
& -\frac{1}{2\sigma^2}\sum_{v=1}^n \int_{0}^{T}\partial^{2}_{vv}H_n(\vec{x}(t)) \d t -\frac{1}{2\sigma^2}\sum_{v=1}^n\int_0^T\big(\partial_vH_n(\vec{x}(t))\big)^2 \d t \bigg].
\end{split}
\end{equation}

The next step is rewrite each term of the right-hand side of the above expression as a function of $U(G_n^{x}).$
To simplify notation, let's suppress the dependency in $n$ and on the variables $\xi$, $\omega$, $\vec{x}$.

First, we rewrite
\begin{equation}
\begin{split}
H_n(\vec{x}(T))-& H_n(\vec{x}(0))= \\
& \sum_{v=1}^{n}\sum_{u;u\sim v}\xi_{v,u}f(x_{v}(T)-x_{u}(T);\,\omega_v,\omega_u) +\sum_{v=1}^{n}g(x_v(T);\omega_v)\\
& + \sum_{v=1}^{n}\sum_{u;u\sim v}\xi_{v,u}f(x_{v}(0)-x_{u}(0);\,\omega_v,\omega_u)+\sum_{v=1}^{n}g(x_v(0);\omega_v).
\end{split}
\end{equation}

As direct consequence of \eqref{def:Fi}:
\begin{equation}
\begin{split}
H_n(\vec{x}(T))-H_n(\vec{x}(0))=n\int (F^{(1)}_T-F^{(1)}_0) \d U(G_n).
\end{split}
\end{equation}
A simple computation leads to
\begin{equation}
\begin{split}
\partial_{v}H_n(\vec{x}(t))=&\sum_{u;u\sim v}\xi_{vu}f'(x_v(t)-x_u(t);\omega_v,\omega_u)\\
&-\sum_{u;u\sim v}\xi_{uv}f'(x_u(t)-x_v(t);\omega_v,\omega_u)+g'(x_v(t);\omega_v),\\
\partial^{2}_{vv}H_n(\vec{x}(t))=&\sum_{u;u\sim v}\xi_{vu}f''(x_v(t)-x_u(t);\omega_v,\omega_u)\\
&+\sum_{u;u\sim v}\xi_{uv}f''(x_u(t)-x_v(t);\omega_v,\omega_u)+g''(x_v(t);\omega_v).\\
\end{split}
\end{equation}
And we immediately obtain from \eqref{def:Fi} that
\begin{equation}
\begin{split}
\sum_{v=1}^n\int_0^T\big(\partial_vH_n(\vec{x}(t))\big)^2 \d t = & n\int F^{(2)} \d U(G_n^{x}), \\
\sum_{v=1}^n \int_{0}^{T} \partial^{2}_{vv}H_n(\vec{x}(t)) \d t= & n \int F^{(3)} \d U(G_n^{x}). 
\end{split}
\end{equation}
Combining the equations above concludes the proof.
\end{proof}

\appendix

\section{Properties of BC-entropy}

\par In this section we collect properties of the BC entropy that are used throughout the text. The first result below gives conditions that imply infinite BC entropy. 

\begin{theorem}[\cite{da}, Theorem~1]\label{theorem1}
Let $\mu\in \mathcal{P}(\bar{\mathcal{G}}_*)$ with $0<\deg(\mu)<\infty$, and assume that one of the following conditions holds
\begin{enumerate}
\item $\mu$ is not unimodular.
\item $\mu$ is not supported on $\bar{\mathcal{T}}_*.$
\item either there exist $x,\,x'\in \Xi$ such that $\deg_{x,x'}(\mu)\neq d_{x,x'}$ or there exists $\theta \in \Theta$ such that $\Pi_\theta(\mu)\neq q_\theta$.
\end{enumerate}
Then, for any choice of $(\vec{m}^{(n)},\vec u^{(n)})$ adapted to $(\vec{d},Q)$, we have
\begin{equation*}
\overline \Sigma_{\vec d,Q}(\mu)\vert_{\vec m^{(n)},\vec u^{(n)}}=-\infty.
\end{equation*}
\end{theorem}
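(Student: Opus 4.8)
The plan is to show that each of the three conditions makes $\mathcal{G}^{(n)}_{\vec m,\vec u}(\mu,\varepsilon)$ so small (relative to the normalizing factor $\|\vec m^{(n)}\|_1 \log n$) that the $\limsup$ in the definition of the $\varepsilon$-upper BC entropy tends to $-\infty$ as $\varepsilon \downarrow 0$. The overall strategy is a contradiction/contrapositive argument: if $\overline\Sigma_{\vec d,Q}(\mu)|_{\vec m^{(n)},\vec u^{(n)}}$ were not $-\infty$, then for arbitrarily small $\varepsilon$ there are exponentially (in $n$, at the right scale) many graphs $G \in \mathcal{G}^{(n)}_{\vec m,\vec u}$ with $\d_{LP}(U(G),\mu) < \varepsilon$; I would then extract a limit point of these $U(G_n)$ and show it must coincide with $\mu$ on the relevant local statistics, producing a contradiction with whichever of (1)--(3) is assumed.

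\textbf{Conditions (1) and (2).} For these I would use the general fact that any subsequential local weak limit of finite graphs is automatically unimodular and, moreover, that the property of being supported on trees is closed in the local topology in the following quantitative sense: a graph $G$ on $[n]$ whose empirical neighborhood measure is within $\varepsilon$ of $\mu$ in $\d_{LP}$ has, for every fixed radius $h$, a controlled fraction of roots whose depth-$h$ neighborhood is a tree. The key point is that $U(G_n)$, being the empirical neighborhood distribution of an actual finite graph, satisfies the unimodularity relation~\eqref{eq:unimodularity} exactly (this is the standard ``finite graphs are unimodular'' observation, obtained by summing over ordered pairs of vertices in a connected component). Since $\mathcal{P}_u(\bar{\mathcal G}_*)$ is closed (the Proposition in Section~\ref{subsec:unimodularity}), if $\d_{LP}(U(G_n),\mu) < \varepsilon_n \to 0$ along a subsequence realizing the $\limsup$, then $\mu$ is unimodular, contradicting~(1). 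For~(2), I would argue that non-support on $\bar{\mathcal T}_*$ means $\mu$ assigns positive mass to rooted graphs containing a cycle of some length $\le 2h_0$; but the presence of a short cycle near the root is a condition that is open in a suitable sense, so it would be inherited (up to a small loss in mass) by $U(G_n)$ for large $n$ — meaning a positive fraction of vertices of $G_n$ lie on short cycles. One then counts: fixing the cycle structure costs essentially no reduction in $\|\vec m^{(n)}\|_1 \log n$ but the number of ways to complete the graph while keeping the mark-count vectors fixed is reduced by a factor that is negligible on the $\log n$ scale, forcing $\overline\Sigma = -\infty$. (Alternatively, one invokes the configuration-model reconstruction of~\cite[Proposition~8]{da}: for the configuration model the depth-$h$ type statistics concentrate, and short cycles are exponentially rare, which is exactly what makes this term blow down.)

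\textbf{Condition (3).} Here the obstruction is a mismatch of first-order statistics: either $\deg_{x,x'}(\mu) \ne d_{x,x'}$ for some pair $x,x'$, or $\Pi_\theta(\mu) \ne q_\theta$ for some $\theta$. But for any graph $G \in \mathcal{G}^{(n)}_{\vec m,\vec u}$ we have, by definition of the count vectors, exact identities $\deg^{x,x'}(U(G)) = m_G(x,x')/n$ (up to the $\tfrac12$ bookkeeping on the diagonal) and $\Pi_\theta(U(G)) = u_G(\theta)/n$. Since $(\vec m^{(n)},\vec u^{(n)})$ is adapted to $(\vec d,Q)$, these converge to $d_{x,x'}$ and $q_\theta$. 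On the other hand $\mu \mapsto \deg^{x,x'}(\mu)$ and $\mu \mapsto \Pi_\theta(\mu)$ are continuous (here finiteness of $\Theta,\Xi$ and the integrability $\E_\mu \deg_G(o) < \infty$, which is forced by $\deg(\mu) < \infty$, matter — one needs a uniform integrability / truncation argument so that $\d_{LP}$-convergence implies convergence of the mean degree functionals). Hence if $\d_{LP}(U(G_n),\mu) \to 0$ we would get $\deg^{x,x'}(\mu) = d_{x,x'}$ and $\Pi_\theta(\mu) = q_\theta$ for all indices, contradicting~(3). Therefore for $\varepsilon$ small enough the set $\mathcal{G}^{(n)}_{\vec m,\vec u}(\mu,\varepsilon)$ is empty for all large $n$, giving $\overline\Sigma_{\vec d,Q}(\mu,\varepsilon)|_{\vec m^{(n)},\vec u^{(n)}} = -\infty$, and a fortiori the $\varepsilon\downarrow 0$ limit is $-\infty$.

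\textbf{Main obstacle.} The routine part is condition~(3); the delicate part is making the counting arguments for~(1) and~(2) precise at the correct scale — one must check that the ``defect'' imposed by non-unimodularity or by the presence of short cycles genuinely costs a super-$o(n)$ amount in $\log|\mathcal{G}^{(n)}_{\vec m,\vec u}(\mu,\varepsilon)|$ after subtracting $\|\vec m^{(n)}\|_1\log n$, rather than merely a bounded factor. Since this is precisely the content of~\cite[Theorem~1]{da}, I expect the cleanest route is to quote the asymptotic~\eqref{eq:sizeGmu} for the denominator together with the configuration-model upper bound from~\cite{da} for the numerator, and then feed in the continuity/closedness facts above; the bulk of the work is bookkeeping to confirm that the three structural defects each kill the entropy rather than merely lowering it.
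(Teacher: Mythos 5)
This statement is quoted verbatim from Delgosha--Anantharam (\cite{da}, Theorem~1); the paper offers no proof of it, so there is nothing internal to compare against. Treating your proposal as a standalone argument, the soft parts are fine: for condition~(1), the empirical neighborhood distribution $U(G)$ of any finite graph satisfies~\eqref{eq:unimodularity} exactly, and since $\mathcal{P}_u(\bar{\mathcal{G}}_*)$ is closed, a non-unimodular $\mu$ has a ball $B(\mu,\varepsilon_0)$ containing no unimodular measures, so $\mathcal{G}^{(n)}_{\vec m,\vec u}(\mu,\varepsilon)$ is empty for $\varepsilon<\varepsilon_0$; similarly the vertex-mark part of~(3) works because $\Pi_\theta$ is a bounded, locally determined (hence weakly continuous) functional and $\Pi_\theta(U(G))=u_G(\theta)/n$ exactly.

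The genuine gaps are in the degree part of~(3) and in~(2). The functional $\mu\mapsto\deg^{x,x'}(\mu)$ is \emph{not} continuous for the weak topology: $\deg^{x,x'}_G(o)$ is continuous but unbounded, so $\E_\mu[\deg^{x,x'}]$ is only lower semicontinuous. Consequently $\d_{LP}(U(G),\mu)<\varepsilon$ does not force $\deg^{x,x'}(U(G))$ close to $\deg^{x,x'}(\mu)$, and when $\deg^{x,x'}(\mu)<d_{x,x'}$ the set $\mathcal{G}^{(n)}_{\vec m,\vec u}(\mu,\varepsilon)$ is typically \emph{not} empty (the excess edges can be hidden on a vanishing fraction of high-degree vertices). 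There is no uniform integrability to invoke over all graphs in that set, so your contradiction argument only disposes of the direction $\deg^{x,x'}(\mu)>d_{x,x'}$; the other direction genuinely requires the counting estimate showing $\log|\mathcal{G}^{(n)}_{\vec m,\vec u}(\mu,\varepsilon)|\le \|\vec m^{(n)}\|_1\log n - C(\varepsilon)n$ with $C(\varepsilon)\to\infty$ as $\varepsilon\downarrow 0$. The same quantitative target is what you need for condition~(2), and your description of it is off: a reduction in the count that is ``negligible on the $\log n$ scale'' would leave $\overline\Sigma$ finite, not send it to $-\infty$, and ``short cycles are exponentially rare'' with a fixed rate likewise yields only a finite negative value. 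What must be shown is that the exponential rate of decay diverges as $\varepsilon\downarrow 0$. Finally, your proposed fallback of quoting the configuration-model bounds of~\cite{da} for the numerator is essentially quoting the theorem being proved, so it cannot close the argument.
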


The entropy $\Sigma_{\vec{d},Q}$ can be calculated via suitable sequences of approximations, which rely on the generalized configuration model introduced by Bordenave and Caputo~\cite{bc}.

Fix $h \geq 0$ and recall Definition~\ref{def:admissible} of an admissible probability $\PP \in \mathcal{P}(\bar{\mathcal{T}}^{h}_{*})$. If $\PP$ is admissible and $d := \EE(\deg_T (o)) > 0$, let $\pi_\PP$ denote the probability distribution on $(\Xi\times\bar{\mathcal{T}}^{h-1}_*)^{2}$ defined as
\begin{equation}\label{eq:pi_P}
\pi_\PP (t,t' ) := \frac{e_\PP (t,t' )}{d}.
\end{equation}
To see that $\pi_\PP$ is a probability measure, it suffices to observe that, for each $[T,o] \in\bar{\mathcal{T}}_*$, we have
\begin{equation*}
\deg_T (o) = \sum_{t,t'\in \Xi\times\bar{\mathcal{T}}^{h-1}_*} E_h (t,t')(T,o) \quad \text{ and } \quad d = \sum_{t,t' \in \Xi\times\bar{\mathcal{T}}^{h-1}_*}e_\PP (t,t').
\end{equation*}

For $h \geq 1$ and admissible $\PP \in \mathcal{P}(\bar{\mathcal{T}}^h_*)$ with $H(P) < \infty$ and $\EE(\deg_T (o)) > 0$, define the truncated entropy
\begin{equation}\label{eq:J_h}
J_h (\PP) := -s(d) + H(\PP) -\frac{d}{2}H(\pi_\PP )-\sum_{t,t' \in \Xi\times\bar{\mathcal{T}}^{h-1}_*}\mathbb{E}_\PP \big( \log E_h (t,t' )! \big),
\end{equation}
where $d := \mathbb{E}_\PP(\deg_T (o))$ is the average degree at the root and $s(d) = d/2-d/2\log d$. Observe that $H(P)<\infty$ and thus $J_h (\PP) \in [-\infty,\infty)$ is well-defined.

In~\cite{da}, the authors prove the following result, that relates the truncated entropy to the BC entropy of a distribution.
\begin{proposition}[\cite{da}, Theorem 3, Item 2]\label{prop:truncated_entropy}
Let $\PP \in \mathcal{P}_u(\bar{\mathcal{T}}_{*})$ be a probability measure with $\mathbb{E}_\PP(\deg_T(o)\log \deg_T(o))<\infty$. Assume that, for each $h \geq 1$, the measure $\mu_h$ is admissible and $H(\PP_h)<\infty$. Under these hypotheses, it holds that 
\begin{equation*}
\Sigma_{\vec{d},Q}(\PP)=\lim_{h \to \infty} J_h(\PP_h).
\end{equation*}
\end{proposition}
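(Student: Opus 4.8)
\emph{Proof idea.} The plan is to recognize the truncated entropy $J_h(\PP_h)$ as a ``depth-$h$ version'' of the BC entropy and then let $h\to\infty$. Concretely, define a depth-$h$ truncated entropy $\Sigma^{(h)}_{\vec d,Q}(\PP_h)$ exactly as in~\eqref{eq:def_entropy}, but counting only graphs $G\in\mathcal{G}^{(n)}_{\vec m,\vec u}$ whose \emph{depth-$h$} empirical neighborhood distribution $U(G)_h$ lies within $\varepsilon$ of $\PP_h$, and then sending $\varepsilon\downarrow 0$. The argument splits into (a) the combinatorial identity $\Sigma^{(h)}_{\vec d,Q}(\PP_h)=J_h(\PP_h)$ for each fixed $h$, proved via a colored configuration model, and (b) the limit statement $\lim_{h\to\infty}\Sigma^{(h)}_{\vec d,Q}(\PP_h)=\Sigma_{\vec d,Q}(\PP)$, which matches the depth-$h$ constraint with the genuine local-topology ball $B(\PP,\delta)$.

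For step (a) I would count graphs with prescribed depth-$h$ type counts $\approx n\PP_h$ by the generalized configuration model of~\cite{bc}. First assign the vertex types, contributing a multinomial factor $\approx e^{nH(\PP_h)}$; then attach to each vertex $v$, of depth-$h$ type $\tau_v$, its $\deg(\tau_v)$ labeled half-edges, colored by directed depth-$h$ edge types in $(\Xi\times\bar{\mathcal{T}}_*^{h-1})^2$ so that exactly $E_h(t,t')(\tau_v)$ of them receive color $(t,t')$, contributing $\prod_v \frac{\deg(\tau_v)!}{\prod_{t,t'}E_h(t,t')(\tau_v)!}$; finally match each $(t,t')$-colored half-edge with a $(t',t)$-colored one. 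Admissibility of $\PP_h$, that is $e_\PP(t,t')=e_\PP(t',t)$, is precisely what makes such matchings exist, and their number is $\prod (ne_\PP(t,t'))!$ up to the standard correction $2^{-k}k!$ for directed types with $t=t'$ (matched among themselves). Dividing by $\prod_v\deg(\tau_v)!$ for the half-edge relabelings, applying Stirling's formula, and simplifying with $\sum_{t,t'}e_\PP(t,t')=d$, $e_\PP(t,t')=d\,\pi_\PP(t,t')$ (see~\eqref{eq:pi_P}) and $\tfrac{d}{2}\log d-\tfrac{d}{2}=-s(d)$, the logarithm of the count collapses to $nJ_h(\PP_h)+\|\vec m^{(n)}\|_1\log n+o(n)$, exactly the expression in~\eqref{eq:J_h}. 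Two error terms must be shown to be $e^{o(n)}$: (i) the configuration model produces multigraphs, and the multiplicity with which it realizes a given \emph{simple} graph (equivalently, the probability of simplicity) is $e^{o(n)}$ under the hypothesis $\mathbb{E}_\PP(\deg_T(o)\log\deg_T(o))<\infty$; and (ii) a matched configuration need not actually exhibit the promised depth-$h$ neighborhood at every vertex, but the reconstruction estimate~\cite[Proposition~8]{da} controls this mismatch so that an $e^{o(n)}$-fraction of configurations is faithful. Matching upper and lower bounds then give $\Sigma^{(h)}_{\vec d,Q}(\PP_h)=J_h(\PP_h)$.

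For step (b) I would first note that $h\mapsto J_h(\PP_h)$ is non-increasing: the depth-$(h+1)$ constraint refines the depth-$h$ one through the continuous truncation map $[G,o]_{h+1}\mapsto[G,o]_h$, and consistency of $\PP$ together with the finite-order Markov property ${\rm UGWT}_{h+1}\big(({\rm UGWT}_h(\PP_h))_{h+1}\big)={\rm UGWT}_h(\PP_h)$ from Section~\ref{subsec:UGW} lets one compare the counts; hence the limit $\ell:=\lim_h J_h(\PP_h)$ exists in $[-\infty,s(\vec d\,)+H(Q)]$. Since the events $\{U(G)_h\in B(\PP_h,\varepsilon)\}$ generate the local topology, any $G$ with $U(G)\in B(\PP,\delta)$ satisfies $U(G)_h\in B(\PP_h,\varepsilon)$ once $h$ is large, which yields $\Sigma_{\vec d,Q}(\PP)\le\ell$. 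For the reverse inequality, starting from a graph whose depth-$h$ statistics are $\varepsilon$-close to $\PP_h$, I would resample, on $o(n)$ of its neighborhoods, the part beyond depth $h$ according to ${\rm UGWT}_h(\PP_h)$; this local surgery costs only an $e^{o(n)}$ factor and produces a graph with $U(G)\in B(\PP,\delta)$ for any prescribed $\delta$ once $h$ is large, giving $\Sigma_{\vec d,Q}(\PP)\ge\ell$. Combining the two bounds yields $\Sigma_{\vec d,Q}(\PP)=\lim_{h\to\infty}J_h(\PP_h)$.

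The main obstacle is the interface between the depth-$h$ count and the genuine local-topology count — items (ii) above and the lower bound in step (b) — namely showing that an $\varepsilon$-tolerance on the depth-$h$ empirical distribution is, up to $e^{o(n)}$ factors, the same constraint as a $\delta$-tolerance on the full empirical neighborhood distribution. This is exactly where unimodularity of $\PP$ and the reconstruction result of~\cite{da} are indispensable: without them the configuration-model count of ``depth-$h$ consistent'' graphs could a priori be exponentially larger than the true BC-entropy count, and the limits in $\varepsilon$ and $h$ would fail to interchange.
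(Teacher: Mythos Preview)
This proposition is not proved in the paper: it is quoted verbatim from~\cite{da} (Theorem~3, Item~2) and used as a black box. There is therefore no ``paper's own proof'' to compare against. Your sketch is, however, a faithful outline of the argument in~\cite{da} (which in turn extends~\cite{bc}): the colored configuration model with half-edges typed by $(t,t')\in(\Xi\times\bar{\mathcal{T}}_*^{h-1})^2$, admissibility ensuring the matching is feasible, Stirling producing the expression~\eqref{eq:J_h}, the $\mathbb{E}_\PP[\deg_T(o)\log\deg_T(o)]<\infty$ hypothesis controlling the simple-graph correction, and the reconstruction estimate~\cite[Proposition~8]{da} handling the gap between ``color-consistent'' configurations and genuine depth-$h$ neighborhoods. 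The two-step decomposition into $\Sigma^{(h)}=J_h$ and $\Sigma^{(h)}\to\Sigma$ is exactly how~\cite{da} organizes the proof.

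One caution on step~(b): the ``local surgery'' you describe for the lower bound --- resampling the part beyond depth~$h$ on $o(n)$ neighborhoods --- is not quite how~\cite{da} proceeds, and as stated it is underspecified (resampling neighborhoods independently will typically create inconsistencies where they overlap). The actual argument in~\cite{da} stays within the configuration-model framework throughout: one shows directly that the colored configuration model with depth-$h$ types already produces graphs whose \emph{full} empirical neighborhood distribution is close to ${\rm UGWT}_h(\PP_h)$, and then uses that $\PP$ is the weak limit of ${\rm UGWT}_h(\PP_h)$ as $h\to\infty$ (a consequence of unimodularity and the finite-order Markov property). So the lower bound does not require a separate surgery step; it falls out of the same reconstruction estimate you invoked in~(ii).
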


Motivated by the proposition above, we define the notion of strong admissibility.
\begin{definition}
For $h \geq 1$, the probability distribution $\PP \in \mathcal{P}(\bar{\mathcal{T}}_*^h)$ is strongly admissible if $\PP$ is admissible, $\mathbb{E}_\PP [\deg_T (o) \log \deg_T (o)] < \infty$, and $H(\PP) < \infty$.
Let $\mathcal{P}_h$ denote the set of strongly admissible probability distributions $\PP \in \mathcal{P}(\bar{\mathcal{T}}_*^h)$.
\end{definition}

Combining Propositions~5 and~6 of \cite{da} yields the following result.
\begin{proposition}\label{prop:entropyUGW}
Assume that $\PP \in \mathcal{P}_h$.  Then
\begin{equation*}
\Sigma_{\vec{\deg}(\PP),\Pi(\PP)}({\rm UGWT}_h(\PP))=J_h(\PP).
\end{equation*}
\end{proposition}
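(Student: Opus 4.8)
The plan is to prove the identity $\Sigma_{\vec{\deg}(\PP),\Pi(\PP)}(\mathrm{UGWT}_h(\PP)) = J_h(\PP)$ by reducing it to Proposition~\ref{prop:truncated_entropy} applied to the measure $\mu := \mathrm{UGWT}_h(\PP)$. First I would observe that since $\PP \in \mathcal{P}_h$, the tree $\mathrm{UGWT}_h(\PP)$ is a well-defined unimodular probability measure on $\bar{\mathcal{T}}_*$ by the Proposition in Section~\ref{subsec:UGW}, and moreover $\big(\mathrm{UGWT}_h(\PP)\big)_h = \PP$. The first genuine step is therefore to verify that $\mu = \mathrm{UGWT}_h(\PP)$ satisfies the hypotheses of Proposition~\ref{prop:truncated_entropy}: namely that $\mathbb{E}_\mu(\deg_T(o)\log\deg_T(o)) < \infty$, that $\mu_k$ is admissible for every $k \geq 1$, and that $H(\mu_k) < \infty$ for every $k$. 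The degree condition follows immediately because the root degree of $\mathrm{UGWT}_h(\PP)$ has exactly the law of $\deg_T(o)$ under $\PP$ (the construction only adds layers to already-present edges, never changes the root degree), and $\PP$ is strongly admissible. Admissibility of every $\mu_k$ follows from unimodularity of $\mu$, since unimodularity of a measure on trees forces the edge-type reversibility $e_{\mu_k}(g,g') = e_{\mu_k}(g',g)$ at every truncation level. The finiteness $H(\mu_k)<\infty$ for all $k$ is the point that will require a little care, and I expect it to be the main obstacle: one must control the entropy of the depth-$k$ truncation of the UGWT for $k > h$, which by the recursive/branching structure of the construction and the finite-order Markov property decomposes into $H(\PP)$ plus a sum of conditional entropies of the kernels $\widehat{\PP}_{t,t'}$, each of which is dominated in terms of $H(\PP)$ and the degree moments; finiteness then propagates inductively in $k$ using $H(\PP)<\infty$ and $\mathbb{E}_\PP[\deg\log\deg]<\infty$.

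Once the hypotheses are checked, Proposition~\ref{prop:truncated_entropy} gives
\begin{equation*}
\Sigma_{\vec{\deg}(\PP),\Pi(\PP)}(\mathrm{UGWT}_h(\PP)) = \lim_{k\to\infty} J_k\big((\mathrm{UGWT}_h(\PP))_k\big),
\end{equation*}
where I have used $\vec{\deg}(\mu) = \vec{\deg}(\PP)$ and $\vec{\Pi}(\mu) = \vec{\Pi}(\PP)$, both of which hold because these are depth-$1$ functionals and $(\mathrm{UGWT}_h(\PP))_1 = \PP_1$. So it remains to show that the sequence $k \mapsto J_k\big((\mathrm{UGWT}_h(\PP))_k\big)$ is in fact \emph{constant} equal to $J_h(\PP)$ for all $k \geq h$; this is where Proposition~5 of \cite{da} (the other half of the quoted "Combining Propositions~5 and~6") enters. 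Concretely, the finite-order Markov property of the UGWT gives $\mathrm{UGWT}_k\big((\mathrm{UGWT}_h(\PP))_k\big) = \mathrm{UGWT}_h(\PP)$, which expresses the fact that the depth-$k$ seed $(\mathrm{UGWT}_h(\PP))_k$ is itself "generated" by the depth-$h$ seed $\PP$; the content of Proposition~5 of \cite{da} is precisely that $J$ is invariant under this kind of reseeding, i.e. $J_k\big((\mathrm{UGWT}_h(\PP))_k\big) = J_h(\PP)$ whenever $\PP$ is admissible and $k \geq h$. I would invoke that directly.

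Putting the pieces together: the limit in Proposition~\ref{prop:truncated_entropy} is a limit of a constant sequence, hence equals $J_h(\PP)$, which is exactly the claim. The only real work, beyond bookkeeping, is (i) the verification that $\mathrm{UGWT}_h(\PP)$ lies in the domain of Proposition~\ref{prop:truncated_entropy} — in particular the propagation of finite entropy $H(\mu_k)<\infty$ through the recursive construction — and (ii) quoting the $J$-invariance under reseeding in the correct form. I would structure the write-up as: first the domain check (a short lemma or inline argument using strong admissibility of $\PP$ and the construction of UGWT), then the application of Proposition~\ref{prop:truncated_entropy}, then the $J$-invariance step citing \cite{da}, and finally the one-line conclusion that a constant sequence has that constant as its limit.
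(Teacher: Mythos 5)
Your proposal is correct and follows essentially the same route as the paper, which gives no written proof beyond the remark that the result follows by combining Propositions~5 and~6 of~\cite{da}: those two propositions are precisely your two ingredients, namely that ${\rm UGWT}_h(\PP)$ lies in the domain of the formula $\Sigma = \lim_k J_k(\cdot_k)$ (including the entropy and admissibility checks for all truncations) and that $J_k\big(({\rm UGWT}_h(\PP))_k\big)=J_h(\PP)$ for $k\geq h$, so the limit is a constant sequence. Your elaboration of the domain check and the reseeding invariance is a faithful reconstruction of that combination.
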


\section{Compactness in $\bar{\mathcal{G}}_{*}$ and $\mathcal{P}(\bar{\mathcal{G}}_{*})$}\label{app:compacity}

\par In this subsection we provide constructions for families of compact sets on $\bar{\mathcal{G}}_*$ for the case when the spaces of marks $\Theta$ and $\Xi$ are finite.

Since the metrics spaces of marks are finite, compactness of a subset of $\bar{\mathcal{G}}_{*}$ can be deduced from the compactness of the collection of underlying graphs in the space $\mathcal{G}_{*}$. 
Recall that, for a marked graph $G \in \bar{\mathcal{G}}_{*}$, we denote by  $\pi_g(G) \in \mathcal{G}_{*}$ its underlying graph.
\begin{lemma}\label{le:compacts}
Let $K$ be a compact in $\mathcal{G}_{*}$. The set
\begin{equation}
\bar{K} = \{G \in \bar{\mathcal{G}}_{*}: \pi_g(G) \in K\}
\end{equation}
is compact in $\bar{\mathcal{G}}_{*}$.
\end{lemma}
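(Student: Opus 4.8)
The plan is to prove that $\bar K$ is sequentially compact, which suffices since $\bar{\mathcal{G}}_*$ is metrizable. The one structural fact I would record first is that the projection $\pi_g:\bar{\mathcal{G}}_*\to\mathcal{G}_*$ is $1$-Lipschitz, hence continuous: if the pair $(r,\delta)$ is good for $((\bar G,o),(\bar G',o'))$, then the isomorphism witnessing it restricts to an isomorphism between the underlying graphs of $(\bar G,o)_r$ and $(\bar G',o')_r$, so $\dist_{\mathcal{G}_*}(\pi_g(\bar G),\pi_g(\bar G'))\le \tfrac{1}{1+r}\le\tfrac{1}{1+r}+\delta$; taking the infimum over good pairs gives $\dist_{\mathcal{G}_*}(\pi_g(\bar G),\pi_g(\bar G'))\le\dist_{\bar{\mathcal{G}}_*}(\bar G,\bar G')$. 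This will be used twice below.

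Given a sequence $(\bar G_n)_n$ in $\bar K$, I would set $G_n:=\pi_g(\bar G_n)\in K$ and use compactness of $K$ to pass to a subsequence along which $G_n\to H$ for some $H\in K$; by the definition of the local metric this means that for every $r$ the rooted graph $(G_n,o)_r$ is isomorphic to $(H,o)_r$ once $n$ is large. Here is where finiteness of $\Theta$ and $\Xi$ enters: for a fixed finite graph there are only finitely many rooted marked graphs (up to isomorphism) with that underlying graph, so along the subsequence the class $[(\bar G_n,o)_r]$ eventually takes values in a finite set. A diagonal extraction then yields a further subsequence $(\bar G_{n_k})_k$ together with classes $\tau_r\in\bar{\mathcal{G}}^{r}_{*}$ satisfying $[(\bar G_{n_k},o)_r]=\tau_r$ for all $k\ge r$, and the tower $(\tau_r)_r$ is automatically consistent, meaning that each $\tau_r$ is the radius-$r$ truncation of $\tau_{r+1}$. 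Consistency forces every degree appearing in the $\tau_r$ to be finite, since the degree of a vertex at distance $t$ from the root is already frozen in $\tau_{t+1}$; gluing the $\tau_r$ along their truncations therefore produces a genuine connected, locally finite rooted marked graph $\bar G_\infty\in\bar{\mathcal{G}}_*$ with $(\bar G_\infty,o)_r\in\tau_r$ for every $r$.

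To finish I would verify convergence and membership. For $k\ge r$ the isomorphism between $(\bar G_{n_k},o)_r$ and $(\bar G_\infty,o)_r$ matches all marks exactly, so $(r,0)$ is a good pair and $\dist_{\bar{\mathcal{G}}_*}(\bar G_{n_k},\bar G_\infty)\le\tfrac1{1+r}$, whence $\bar G_{n_k}\to\bar G_\infty$. It remains to see $\bar G_\infty\in\bar K$, i.e.\ $\pi_g(\bar G_\infty)\in K$; rather than reproving that agreement of all radius-$r$ balls implies graph isomorphism, I would invoke continuity of $\pi_g$: $\pi_g(\bar G_{n_k})\to\pi_g(\bar G_\infty)$, while $\pi_g(\bar G_{n_k})=G_{n_k}\to H\in K$, and uniqueness of limits gives $\pi_g(\bar G_\infty)=H\in K$.

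The only delicate point I anticipate is the bookkeeping in the diagonal step and the gluing: making precise that the $\tau_r$ form a consistent tower and that such a tower is realized by a unique element of $\bar{\mathcal{G}}_*$, together with the local-finiteness check for that realization. Everything else is either the definition of the two metrics or an immediate consequence of compactness of $K$ and finiteness of the mark spaces.
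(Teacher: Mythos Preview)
Your proof is correct and follows essentially the same approach as the paper: both establish sequential compactness by extracting a subsequence whose underlying graphs converge in $K$, then use finiteness of $\Theta$ and $\Xi$ together with a diagonal argument to stabilize the truncated marked neighborhoods, and finally glue the compatible tower into a limiting element of $\bar{\mathcal G}_*$. Your write-up is in fact slightly more careful in two places---you explicitly record that $\pi_g$ is $1$-Lipschitz and you use this to check $\bar G_\infty\in\bar K$---whereas the paper's proof leaves the membership of the limit in $\bar K$ implicit in the construction.
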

\begin{proof}
Since $\bar{\mathcal{G}}_{*}$ is a metric space, it suffices to verify that $\bar{K}$ is sequentially compact. Consider then a sequence of marked graphs $(G_{k})_{k \in \N}$ in $\bar{K}$ and, by restricting to a subsequence if necessary, assume that $\pi_g(G_{k})$ converges to some rooted graph $g \in \mathcal{G}_{*}$. This implies that, for each $h \geq 1$, $(\pi_g(G_{k}))_{h}$ is eventually equal to $g_{h}$. Write $\tau_{k}$ and $\xi_{k}$ for the vertex- and edge-marks of $G_{k}$, and notice that, for large enough $k$, we can write $(G_{k})_{h}=(g_{h}, \tau_{k,h}, \xi_{k,h})$, where $\tau_{k,h}$ and $\xi_{k,h}$ are the restriction of $\tau_{k}$ and $\xi_{k}$ to $(\pi_g(G_{k}))_{h}$. By using a diagonal argument, we can further restrict our sequence to ensure that, for each $h \geq 1$, the marks on each vertex and edge of $(\pi_g(G_{k}))_{h}$ converge since they live in finite sets. Furthermore, we also have a compatibility structure: if $x \in V(g_{h})$, then the limit of $\tau_{k,h}(x)$ coincides with the limit of $\tau_{k,h+\ell}(x)$, for all $\ell \geq 1$. A similar compatibility holds for edge-marks. This allows us to define vertex- and edge-marks $\tau$ and $\xi$ for $g$, respectively, and obtain a limiting marked graph $\bar{g}=(g, \tau, \xi)$. This implies that $G_{k}$ has a subsequence that converges to $\bar{g}$ and concludes the proof.
\end{proof}

The lemma above reduces the problem of finding compact sets in $\bar{\mathcal{G}}_{*}$ to understanding compacts on $\mathcal{G}_{*}$. The following result, which we state without proof, is useful in this task.
\begin{lemma}[\cite{bc}, Lemma~2.1]\label{le:compactloc}
Fix $t_0 \geq 0$ and $\varphi : \N \to \R_+ $ a non-negative function. For a rooted graph $g\in \mathcal{G}_*$, denote by $e(g_t)$ be the number of edges within distance at most $t$ from the root. Then
\begin{equation*}
K = \big\{g \in \mathcal{G}_* : \, e(g_ t) \leq \varphi (t), \text{ for all } t\, \geq t_0  \big\}
\end{equation*}
is a compact subset of $\mathcal{G}_*$ for the local topology. 
\end{lemma}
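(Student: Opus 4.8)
The plan is to prove that $K$ is compact by showing it is sequentially compact, which suffices because $\mathcal{G}_*$ is metrizable; the argument will run parallel to the proof of Lemma~\ref{le:compacts} above. Throughout, write $g_t$ for the ball of radius $t$ around the root of $g$, and recall that $g_n \to g$ in the local topology precisely when, for every integer $r$, the rooted balls $(g_n)_r$ and $g_r$ are isomorphic for all large $n$.

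The first step is a uniform bound on the sizes of balls of graphs in $K$. For $g \in K$ and an integer $t$, the ball $g_t$ is connected (every vertex of $g_t$ is joined to the root by a path lying inside $g_t$), so $|V(g_t)| \leq e(g_t) + 1$; for $t \geq t_0$ this is at most $\varphi(t)+1$, while for $t < t_0$ one has $g_t \subseteq g_{\lceil t_0 \rceil}$, so in all cases $|V(g_t)| \leq M_t$ for a constant $M_t$ depending only on $\varphi$ and $t_0$. Since there are only finitely many isomorphism classes of rooted graphs on at most $M_t$ vertices, the set $\{\, g_t : g \in K \,\}$ of radius-$t$ balls (up to rooted isomorphism) is finite, for each fixed $t$.

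Next, given a sequence $(g_n) \subseteq K$, I would extract a subsequence by the usual diagonal procedure: first pass to an infinite subsequence along which $(g_n)_1$ equals a fixed class $h_1$, then a further one along which $(g_n)_2 \equiv h_2$, and so on; the diagonal subsequence $(g_{n_k})$ then satisfies $(g_{n_k})_t = h_t$ for all $k \geq t$. The classes $h_t$ are automatically compatible, $(h_{t+1})_t = h_t$, so they glue along root-preserving inclusions into a single rooted graph $g_\infty$; one then checks that $g_\infty$ is connected, that $(g_\infty)_t = h_t$ for every $t$ (graph distances to the root stabilize already inside some finite $h_s$), and hence that $g_\infty$ is locally finite, since the degree of any vertex $v \in V(h_t)$ is computed in the finite graph $h_{t+1}$. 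Thus $g_{n_k} \to g_\infty$ in $\mathcal{G}_*$, and $g_\infty \in K$ because $e((g_\infty)_t) = e(h_t) = e((g_{n_k})_t) \leq \varphi(t)$ for every $t \geq t_0$.

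The only slightly delicate point is the gluing in the last step, i.e.\ assembling the compatible finite balls $h_t$ into a bona fide locally finite rooted graph with exactly those balls, and this is routine bookkeeping rather than a real obstacle. If one prefers to avoid it, an equivalent route uses that $\mathcal{G}_*$ is Polish, hence complete under a compatible metric: $K$ is closed because each map $g \mapsto e(g_t)$ is integer-valued and continuous, and the finiteness from the first step shows $K$ is totally bounded (given $r$ with $\tfrac{1}{1+r} < \epsilon$, picking one element of $K$ realizing each of the finitely many isomorphism classes of radius-$r$ balls yields a finite cover of $K$ by $\epsilon$-balls); closed plus totally bounded plus complete gives compact.
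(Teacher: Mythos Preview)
Your proof is correct. The paper does not give its own proof of this lemma; it is quoted from Bordenave--Caputo~\cite{bc} and explicitly ``state[d] without proof,'' so there is nothing to compare against. Both routes you sketch are standard and sound: the diagonal extraction parallels the proof of Lemma~\ref{le:compacts} as you say, and the closed-plus-totally-bounded argument is a clean alternative.

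One small tightening for the second route: you write ``$\mathcal{G}_*$ is Polish, hence complete under a compatible metric,'' but then verify total boundedness with respect to the specific local metric $d(g,g') \le \tfrac{1}{1+r}$ when $g_r \cong g'_r$. Total boundedness is metric-dependent, so you should note that this particular metric is itself complete (a standard fact for $\mathcal{G}_*$), rather than appealing to an unspecified compatible one. With that adjustment the argument is airtight.
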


\par The advantage the previous lemma provides is that we can now build compact sets by controlling only the growth of the graphs. In this way, we can easily extend~\cite[Lemma~2.3]{bc}. The next result, whose proof follows the same steps from~\cite[Lemma~2.3]{bc} and a straightforward application of Lemma~\ref{le:compacts}, gives us the existence of compacts sets of probability measures $\bar\Pi$ and conditions on $G$ that guarantees $U( G)\in\bar \Pi$.
\begin{lemma}\label{le:tightLWC}
Let $\delta : [0,1] \to \R_+$ be a continuous increasing function such that $\delta(0) = 0$. There exists a compact set $\bar{\Pi} =\bar{\Pi} (\delta) \subset \mathcal{P} (\bar{\mathcal{G}}_{*})$ such that, if a finite marked graph $G$ satisfies 
\begin{equation}\label{eq:tightLWC}
\sum_{v\in S}\deg_{ G}(v) \leq |V| \delta \bigg(\frac{  |S| } { |V | } \bigg), \text{ for all } S\subset V,
\end{equation}
then  $U( G) \in \bar \Pi$.
\end{lemma}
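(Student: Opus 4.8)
The plan is to reduce Lemma~\ref{le:tightLWC} to the corresponding unmarked statement (as in \cite[Lemma~2.3]{bc}) and then build $\bar\Pi$ from the compacts furnished by Lemma~\ref{le:compactloc}, lifting everything from $\mathcal{G}_*$ to $\bar{\mathcal{G}}_*$ via Lemma~\ref{le:compacts}. First I would note that the hypothesis~\eqref{eq:tightLWC} involves only the underlying graph of $G$, and that $\pi_g([G(v),v])=[\pi_g(G)(v),v]$, so that the pushforward of $U(G)$ under $\pi_g$ is $U(\pi_g(G))$ and hence $U(G)(\pi_g^{-1}(K))=U(\pi_g(G))(K)$ for every Borel $K\subseteq\mathcal{G}_*$. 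It therefore suffices to produce, for each $\varepsilon>0$, a compact $K_\varepsilon\subseteq\mathcal{G}_*$ with $U(G')(K_\varepsilon)\ge 1-\varepsilon$ for every finite graph $G'$ satisfying~\eqref{eq:tightLWC}: then $\pi_g^{-1}(K_\varepsilon)$ is compact in $\bar{\mathcal{G}}_*$ by Lemma~\ref{le:compacts}, the family $\{U(G):G\text{ satisfies~\eqref{eq:tightLWC}}\}$ is tight, and its closure $\bar\Pi$ is relatively compact by Prohorov's theorem and closed, hence the required compact set.

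Next I would construct the $K_\varepsilon$. By Lemma~\ref{le:compactloc}, every set of the form $K(\varphi)=\{g\in\mathcal{G}_*:e(g_t)\le\varphi(t)\text{ for all }t\ge 1\}$ is compact. Writing, for a finite graph $G'$ on $n$ vertices, $N_t(v)$ for the size of the radius-$t$ ball $B_t(v)$ around $v$ and $X_t(v)=e([G'(v),v]_t)$ for the number of edges within distance $t$ of $v$, a union bound gives $U(G')(K(\varphi)^c)\le\sum_{t\ge 1}\tfrac1n\#\{v:X_t(v)>\varphi(t)\}$. So the statement reduces to a \emph{uniform} tail bound: for each fixed $t$ there should be a function $\eta_t$ with $\eta_t(a)\to 0$ as $a\to\infty$, depending only on $\delta$, such that $\tfrac1n\#\{v:X_t(v)>a\}\le\eta_t(a)$ for every finite $G'$ obeying~\eqref{eq:tightLWC}; one then picks $\varphi(t)$ with $\eta_t(\varphi(t))<\varepsilon2^{-t}$.

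To prove the uniform tail bound I would argue by induction on the radius. Since the subgraph induced on $B_t(v)$ is simple, $X_t(v)<N_t(v)^2/2$, so $X_t(v)>a$ forces $N_t(v)>\sqrt{2a}$; thus it suffices to control $\zeta_t(m):=\sup\tfrac1n\#\{v:N_t(v)\ge m\}$ (supremum over all admissible $G'$ and all $n$) and show $\zeta_t(m)\to 0$. For $t=1$, applying~\eqref{eq:tightLWC} to $S=\{v:\deg_{G}(v)\ge m-1\}$ yields $(m-1)|S|\le n\,\delta(1)$, i.e. $\zeta_1(m)\le\delta(1)/(m-1)$. For the step $t\to t+1$, use that each $w\in B_{t+1}(v)\setminus\{v\}$ has a neighbour in $B_t(v)$ to get $N_{t+1}(v)\le 1+\sum_{u\in B_t(v)}\deg_G(u)$, and for $v$ with $N_{t+1}(v)\ge m$ split this degree sum at a threshold $p=p(m)$ with $p\to\infty$, $p=o(m)$ (say $p=\lfloor\sqrt m\rfloor$): either the contribution of $u$'s with $N_t(u)<p$ is at least $(m-1)/2$, which (as those $u$ have $\deg_G(u)<p$) forces $N_t(v)>(m-1)/(2p)$ and so puts $v$ in a set of density $\le\zeta_t((m-1)/(2p))$; or $B_t(v)$ meets $H_p:=\{u:N_t(u)\ge p\}$, so $v\in B_t(H_p)$, and iterating~\eqref{eq:tightLWC} over radii gives $|B_t(H_p)|/n\le g^{\circ t}(|H_p|/n)\le g^{\circ t}(\zeta_t(p))$ with $g(x)=x+\delta(x)$. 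Hence $\zeta_{t+1}(m)\le\zeta_t((m-1)/(2p))+g^{\circ t}(\zeta_t(p))\to 0$ as $m\to\infty$, closing the induction, and then $\eta_t(a):=\zeta_t(\lceil\sqrt{2a}\rceil)$ works in Step~2.

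The hard part will be Step~3, and specifically keeping every estimate uniform in $n$ and in $G'$: the ``expansion'' suffered by a rare set of vertices (those with a large $t$-ball, or their radius-$t$ neighbourhoods) when the radius is increased is governed by iterates of $x\mapsto x+\delta(x)$, and it is precisely the hypotheses $\delta(0)=0$ and continuity of $\delta$ that force these iterates to map small quantities to small quantities, so that large neighbourhoods, being rare, stay rare after passing to the larger radius. Steps~1 and~2 are soft, relying only on Lemma~\ref{le:compacts}, Lemma~\ref{le:compactloc} and Prohorov's theorem; they are what genuinely differs from the unmarked situation, and Lemma~\ref{le:compacts} is exactly the ingredient that lets the finiteness of $\Theta$ and $\Xi$ do the work.
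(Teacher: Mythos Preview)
Your proposal is correct and follows essentially the same route the paper sketches: reduce to the unmarked case via Lemma~\ref{le:compacts} and the identity $U(G)\circ\pi_g^{-1}=U(\pi_g(G))$, then reproduce the Bordenave--Caputo argument by iterating the degree hypothesis to control neighborhood sizes and invoking Lemma~\ref{le:compactloc} plus Prohorov. The only cosmetic difference is in Step~3: the iteration used in the paper (visible in the first step of the proof of Lemma~\ref{lemma:caputoadaptado}) bounds $|B(U,h)|/n$ directly by $\delta^{\circ h}(|U|/n)$ and applies this once to the set of high-degree vertices, whereas you run a two-case induction on the radius with the auxiliary function $g(x)=x+\delta(x)$; both arguments exploit exactly the same mechanism, namely that iterates of a continuous increasing map fixing $0$ send small sets to small sets.
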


\section{Mixture}\label{appendix:mixture_and_LDP}

\par Let $\Gamma$ and $\mathfrak{X}$ be Polish spaces and consider a sequence $(\mu^n)_{n \in \N} \in \mathcal{P}(\Gamma)$ of probability measures with respective supports in $\Gamma_n \subset \Gamma$. Consider the following $\mu$-assumptions:
\begin{enumerate}
    \item $(\mu^n)_{n \in \N}$ satisfies a large deviation principle with rate function $\psi$.

    \item $(\mu^n)_{n \in \N}$ is exponentially tight.  
\end{enumerate}

For each $\gamma \in \Gamma_n$, let $\PP_\gamma^n \in \mathcal{P}(\mathfrak{X})$ such that
$\gamma \mapsto \PP_\gamma^n(A)$ is measurable on $\Gamma_{n}$, for each $A \subset X$ measurable. 

Define 
\begin{equation*}
\widetilde{\Gamma}=\{\gamma \in \Gamma: \text{ there exist } \gamma_{n} \in \Gamma_n \text{ such that } \gamma_{n} \to \gamma\}.
\end{equation*}
Consider the collection of $\Gamma$-assumptions:
\begin{enumerate}
\item $\widetilde{\Gamma}\neq \emptyset$.

\item For any $\gamma \in \widetilde{\Gamma}$ and $\gamma_{n} \in \Gamma_n$ such that $\gamma_{n} \to \gamma$, the sequence of probabilities $\big(\PP^n_{\gamma_{n}}\big)_{n \in \N}$ is exponentially tight and satisfies a large deviation principle with rate function $\lambda_\gamma$.
\end{enumerate}

\begin{theorem}[Biggins~\cite{biggins}]\label{t:mixture}
Let $(\mu_n)_{n \in \N}$ be a sequence satisfying the $\mu$-assumptions. For each $n \in \N$, consider a family of probability measures $(\PP^{n}_{\gamma})_{\gamma \in \Gamma_{n}}$, and suppose the $\Gamma$-assumptions hold. Then the sequence $(\PP^n)_{n \in \N}$ defined by
\begin{equation} \label{eq:defPmix}
\PP^n(A)=\int_{\Gamma_n} \PP^n_\gamma(A){\rm d} \mu^n(\gamma), \text{ for any measurable set } A \subset \mathfrak{X},
\end{equation}
satisfies a large deviation principle with a good rate function given by
\begin{equation}
\lambda(x)=\inf\{\lambda_\gamma(x)+\psi(\gamma):\gamma \in \Gamma\}.
\end{equation}
\end{theorem}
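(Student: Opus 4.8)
The plan is to establish the large deviation upper and lower bounds for $(\PP^n)_{n\in\N}$ directly, and separately to check that $(\PP^n)_{n\in\N}$ is exponentially tight; Proposition~\ref{prop:weak_to_strong} then upgrades this to a strong principle with a \emph{good} rate function, which must coincide with $\lambda$. Before starting I would record two elementary facts and adopt the convention $\lambda_\gamma\equiv+\infty$ for $\gamma\in\Gamma\setminus\widetilde\Gamma$. First, $\widetilde\Gamma$ is closed, by a diagonal argument. Second, $\{\psi<\infty\}\subseteq\widetilde\Gamma$: if $\gamma\notin\widetilde\Gamma$ then some ball around $\gamma$ is disjoint from $\Gamma_n$ for all large $n$, hence has zero $\mu^n$-mass eventually, forcing $\psi(\gamma)=\infty$ by the large deviation lower bound for $(\mu^n)$. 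Consequently $\lambda(x)=\inf\{\psi(\gamma)+\lambda_\gamma(x):\gamma\in\Gamma\}=\inf\{\psi(\gamma)+\lambda_\gamma(x):\gamma\in\widetilde\Gamma\}$, so $\lambda$ only involves the well-defined objects $\lambda_\gamma$, $\gamma\in\widetilde\Gamma$.

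For the lower bound I would fix an open set $O\subseteq\mathfrak X$, a point $x_0\in O$ and $\gamma_0\in\widetilde\Gamma$, and aim at $\liminf_n\frac1n\log\PP^n(O)\ge-\psi(\gamma_0)-\lambda_{\gamma_0}(x_0)$; taking the supremum over $(\gamma_0,x_0)$ then yields the required $-\inf_{x\in O}\lambda(x)$. For $\epsilon>0$, restricting the defining integral to $B(\gamma_0,\epsilon)\cap\Gamma_n$ gives $\PP^n(O)\ge\mu^n(B(\gamma_0,\epsilon))\,\inf_{\gamma\in B(\gamma_0,\epsilon)\cap\Gamma_n}\PP^n_\gamma(O)$. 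The first factor is handled by the open-set lower bound for $(\mu^n)$, which gives $\liminf_n\frac1n\log\mu^n(B(\gamma_0,\epsilon))\ge-\psi(\gamma_0)$. For the infimum, I would argue by contradiction: were it too small along some subsequence for every $\epsilon$, a diagonal extraction (interpolated with a sequence witnessing $\gamma_0\in\widetilde\Gamma$, so that the result is defined for all $n$) would produce $\gamma_n\in\Gamma_n$ with $\gamma_n\to\gamma_0$ and $\liminf_n\frac1n\log\PP^n_{\gamma_n}(O)<-\lambda_{\gamma_0}(x_0)\le-\inf_O\lambda_{\gamma_0}$, contradicting the open-set lower bound of the large deviation principle for $(\PP^n_{\gamma_n})$ supplied by the $\Gamma$-assumptions. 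Letting $\epsilon\downarrow0$ finishes this step.

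For the upper bound I would fix a closed set $F\subseteq\mathfrak X$ and $M<\infty$, choose a compact $K_M\subseteq\Gamma$ with $\limsup_n\frac1n\log\mu^n(K_M^c)\le-M$ from exponential tightness of $(\mu^n)$, and split $\PP^n(F)\le\mu^n(K_M^c)+\int_{K_M\cap\Gamma_n}\PP^n_\gamma(F)\,\d\mu^n$. Given $\delta>0$ I would cover $K_M$ by finitely many closed balls; balls whose centre lies outside $\widetilde\Gamma$ can be discarded after shrinking (such a centre has $\psi\equiv\infty$ on a neighbourhood, hence super-exponentially small $\mu^n$-mass), so I may take centres $\gamma_i\in\widetilde\Gamma$. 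Shrinking further, lower semicontinuity of $\psi$ gives $\inf_{\overline B(\gamma_i,r_i)}\psi\ge\psi(\gamma_i)-\delta$, and the same diagonal-extraction argument as above — now applied to the \emph{closed}-set upper bound of the conditional principles — gives $\limsup_n\frac1n\log\sup_{\gamma\in\overline B(\gamma_i,r_i)\cap\Gamma_n}\PP^n_\gamma(F)\le-\inf_F\lambda_{\gamma_i}+\delta$. Summing over the finitely many balls and using $\psi(\gamma_i)+\inf_F\lambda_{\gamma_i}\ge\inf_{x\in F}\{\psi(\gamma_i)+\lambda_{\gamma_i}(x)\}\ge\inf_F\lambda$ yields $\limsup_n\frac1n\log\PP^n(F)\le\max\{-M,-\inf_F\lambda+2\delta\}$; letting $\delta\downarrow0$ and $M\to\infty$ closes the upper bound. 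Exponential tightness of $(\PP^n)$ follows by the same pattern: for each $\gamma_0\in K_M\cap\widetilde\Gamma$, exponential tightness of the conditional laws along $\gamma_n\to\gamma_0$ yields (again by a diagonal argument) a radius and a compact $C_{\gamma_0}\subseteq\mathfrak X$ valid uniformly on a small ball around $\gamma_0$; a finite subcover and the union of these compacts give the compact required for $(\PP^n)$.

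The main obstacle is precisely the passage from the \emph{sequential} control furnished by the $\Gamma$-assumptions — a large deviation principle and exponential tightness for $(\PP^n_{\gamma_n})$ only along sequences $\gamma_n\to\gamma$ — to the \emph{uniform-on-small-balls} estimates needed in the covering steps of the upper bound and of exponential tightness. I expect the bookkeeping of the diagonal-extraction/contradiction arguments that effect this passage to be the only genuinely delicate part; the remaining manipulations — interchanging infima, invoking lower semicontinuity, and the final optimisation identity for $\lambda$ — are routine, and lower semicontinuity of $\lambda$ is then automatic from the principle just proved.
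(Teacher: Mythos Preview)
The paper does not prove this theorem: it is stated in Appendix~\ref{appendix:mixture_and_LDP} with attribution to Biggins and no argument is given, so there is no in-paper proof to compare against. Your sketch follows the standard route one finds in Biggins's work (and related mixture-LDP results such as Chaganty's): localise the mixing integral on small balls in $\Gamma$, use the LDP for $(\mu^n)$ on the $\Gamma$-factor, and pass from the sequential conditional LDPs to uniform-on-balls estimates via a contradiction/diagonal argument. That is the right architecture, and you have correctly identified the only genuinely delicate point, namely converting the \emph{per-sequence} $\Gamma$-assumption into the \emph{local-uniform} bounds needed for the covering.

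One small imprecision worth tightening: your claim that $\gamma\notin\widetilde\Gamma$ implies some ball $B(\gamma,\epsilon)$ is disjoint from $\Gamma_n$ \emph{for all large $n$} is slightly too strong; the correct statement is that this happens for \emph{infinitely many} $n$. This still suffices for your purpose, since $\mu^n(B(\gamma,\epsilon))=0$ along a subsequence forces $\liminf_n\frac1n\log\mu^n(B(\gamma,\epsilon))=-\infty$, hence $\psi\equiv\infty$ on $B(\gamma,\epsilon)$ by the open-set lower bound, and in particular $\psi(\gamma)=\infty$. With that fix, your outline is sound.
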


\section{The approximation lemma}\label{app:approximation_lemma}

\par Roughly speaking, Lemma~\ref{lema:Kurtz} below allows us to derive large deviations for sequences that can be well approximated by collections of sequences that already satisfy large deviation principles. The proof of this lemma is an adaptation of~\cite[Lemma 3.14]{feng_kurtz} for dependent parameters, and we omit it here.
\begin{lemma}\label{lema:Kurtz}
Fix a Polish space $\big( \mathfrak{X}, \d \big)$ and, for each $n$, let $X_n$ be an $\mathfrak{X}$-valued random variable. Suppose that, for each $ k> 0$, there exist a sequence $\{X_n^{k}\}_{n \in \N}$ of $\mathfrak{X}$-valued random variables which satisfies a large deviation principle with good rate function $I^{k}$. Assume furthermore that there exists a function $a: \N \to \R$ with $\lim_{k \to \infty} a(k) = \infty$ such that
\begin{equation}\label{eq:hipothesis_kurtz}
\limsup_{n \to \infty} \frac{1}{n} \log P \left(\d (X_n, X_n^{k}) > \frac{1}{k} \right) \leq -a(k).
\end{equation}
Then $\{X_n\}_{n \in \N}$ satisfies the large deviation principle with good rate function given by
\begin{equation}
I(x) = \limsup_{\delta \to 0} \limsup_{k \to 0} \inf_{y \in B_{\delta}(x)} I^{k}(y) = \liminf_{\delta \to 0} \liminf_{k \to 0} \inf_{y \in B_{\delta}(x)} I^{k}(y).
\end{equation}
\end{lemma}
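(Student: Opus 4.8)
The plan is to build the large deviation principle for $\{X_n\}$ directly from that of the approximants $\{X_n^k\}$ by two elementary coupling inequalities, exactly as in the proof of the transfer principle for exponentially good approximations (cf.~\cite[Theorem~4.2.16]{dembo_zeitouni}), and then to read off the claimed formula for the rate function.

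For the \emph{lower bound}, fix an open set $O$, a point $x\in O$, and $\delta>0$ small enough that $\overline{B_{2\delta}(x)}\subset O$. For every $k$ with $1/k<\delta$ one has the inclusion $\{X_n^k\in B_\delta(x)\}\setminus\{\d(X_n,X_n^k)>1/k\}\subset\{X_n\in O\}$, hence
\begin{equation*}
\P(X_n\in O)\ \ge\ \P\bigl(X_n^k\in B_\delta(x)\bigr)-\P\bigl(\d(X_n,X_n^k)>1/k\bigr).
\end{equation*}
Taking $\tfrac1n\log(\cdot)$, using the large deviation lower bound for $X_n^k$ for the first term and the hypothesis~\eqref{eq:hipothesis_kurtz} for the second, and noting that $a(k)\to\infty$ makes the second term negligible whenever $\inf_{B_\delta(x)}I^k$ is finite (the other $k$ being discarded), one obtains $\liminf_n\tfrac1n\log\P(X_n\in O)\ge-\inf_{y\in B_\delta(x)}I^k(y)$ along every such $k$; optimising over $k$, then letting $\delta\to0$, and finally taking the supremum over $x\in O$ yields $\liminf_n\tfrac1n\log\P(X_n\in O)\ge-\inf_{x\in O}I(x)$.

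For the \emph{upper bound}, first treat a compact set $K$. For any $k$, writing $K^{1/k}=\{y:\d(y,K)\le 1/k\}$, which is closed, one has $\{X_n\in K\}\subset\{X_n^k\in K^{1/k}\}\cup\{\d(X_n,X_n^k)>1/k\}$, hence
\begin{equation*}
\P(X_n\in K)\ \le\ \P\bigl(X_n^k\in K^{1/k}\bigr)+\P\bigl(\d(X_n,X_n^k)>1/k\bigr).
\end{equation*}
Applying the (full) large deviation upper bound for $X_n^k$ to the closed set $K^{1/k}$, then~\eqref{eq:hipothesis_kurtz}, and letting $k\to\infty$, the term $-a(k)$ disappears; the resulting bound $-\limsup_k\bigl(\inf_{K^{1/k}}I^k\bigr)$ is controlled by $-\inf_{x\in K}I(x)$ using that $K^{1/k}$ decreases to $K$ and that, by compactness of $K$, any sequence $y_k\in K^{1/k}$ is relatively compact, so near‑minimisers of $I^k$ on $K^{1/k}$ can be transferred to points of $K$. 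This gives the upper bound on compacts; combining it with the exponential tightness of $\{X_n\}$ — which in the applications of the lemma is verified directly, and in general is transferred from that of the $\{X_n^k\}$ by choosing $k$ with $a(k)$ large together with a compact on which $X_n^k$ concentrates exponentially — and invoking Proposition~\ref{prop:weak_to_strong} upgrades the weak principle to the full large deviation principle with a good rate function.

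It remains to identify the rate function. The lower bound argument naturally produces the $\liminf$‑version $\underline I(x)=\liminf_{\delta\to0}\liminf_{k\to\infty}\inf_{y\in B_\delta(x)}I^k(y)$, while the compact upper bound equally produces the $\limsup$‑version $\overline I(x)=\limsup_{\delta\to0}\limsup_{k\to\infty}\inf_{y\in B_\delta(x)}I^k(y)$; since $\underline I\le\overline I$ and both serve as rate functions for the \emph{same} sequence $\{X_n\}$, the uniqueness of the rate function in a large deviation principle forces $\underline I=\overline I$, which is the asserted identity. The technical heart of the argument — and the point where the adaptation of~\cite[Lemma~3.14]{feng_kurtz} to the present setting of dependent parameters requires care — is the bookkeeping of the iterated limits $n\to\infty$, $k\to\infty$, $\delta\to0$ (and, in a covering version of the compact upper bound, of the minimum over a finite subcover), ensuring that the approximation error, which is controlled only at the exponential scale $-a(k)$, is absorbed thanks to $a(k)\to\infty$.
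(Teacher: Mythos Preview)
The paper does not actually prove this lemma; it states only that ``the proof of this lemma is an adaptation of~\cite[Lemma~3.14]{feng_kurtz} for dependent parameters, and we omit it here.'' Your sketch follows precisely that route---the standard transfer argument for exponentially good approximations via coupling inclusions for the lower and upper bounds, a covering version of the compact upper bound, exponential tightness, and the uniqueness of rate functions to identify $\underline I=\overline I$---so it is entirely consistent with what the paper has in mind, and there is no independent proof in the paper to compare against.
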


\section{Additional lemmas}

\par This section contains two standard lemmas that are used in the proofs of Theorems~\ref{t:ldp_uniformg_general} and~\ref{t:ldp_erg_general}.
  The first lemma provides an easy upper bound for deviation of averages of Bernoulli random variables.
\begin{lemma}\label{lemma:leiBinomial}
Let $X_1, \cdots, X_m$ be i.i.d.\ random variables with law $\PP$. For any $a>0$ and measurable set $B$, we have
\begin{equation*}
\mathbb{P} \left( \sum_{i=1}^m 1_{\{X_i \in B\}} > am \right) \leq  \left( \frac{e\PP(B)}{a}\right)^{\lceil am \rceil}.
\end{equation*}
\end{lemma}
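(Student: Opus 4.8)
The plan is to recognize $S:=\sum_{i=1}^{m}\mathbf{1}_{\{X_i\in B\}}$ as a $\mathrm{Binomial}(m,p)$ random variable with $p=\PP(B)$ and to bound its upper tail by a union bound over index sets rather than by a Chernoff/Markov argument; this is what produces the clean factor $e/a$. First I would set $k:=\lceil am\rceil$ and observe that, since $S$ takes integer values, $\{S>am\}\subseteq\{S\ge k\}$ regardless of whether $am$ is an integer. If $k>m$ then $\{S\ge k\}=\emptyset$ and the asserted inequality is trivial, so I may assume $1\le k\le m$ (note $k\ge 1$ since $a>0$ and we may take $m\ge 1$).

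For the core estimate I would note that $\{S\ge k\}$ is contained in the union, over subsets $I\subseteq\{1,\dots,m\}$ with $|I|=k$, of the events $\{X_i\in B\text{ for all }i\in I\}$; a union bound together with independence of the $X_i$ then gives
\begin{equation*}
\mathbb{P}(S\ge k)\;\le\;\sum_{I:\,|I|=k}\mathbb{P}\big(X_i\in B\text{ for all }i\in I\big)\;=\;\binom{m}{k}\,p^{k}.
\end{equation*}

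It then remains only to estimate the binomial coefficient: from $\binom{m}{k}\le m^{k}/k!$ and the elementary bound $k!\ge (k/e)^{k}$ (immediate from $e^{k}\ge k^{k}/k!$) one gets $\binom{m}{k}\le (em/k)^{k}$, hence $\mathbb{P}(S\ge k)\le (emp/k)^{k}$. Finally, since $k=\lceil am\rceil\ge am$ we have $emp/k\le ep/a$, and monotonicity of $x\mapsto x^{k}$ on $[0,\infty)$ yields
\begin{equation*}
\mathbb{P}(S>am)\;\le\;\mathbb{P}(S\ge k)\;\le\;\Big(\tfrac{ep}{a}\Big)^{k}\;=\;\Big(\tfrac{e\,\PP(B)}{a}\Big)^{\lceil am\rceil},
\end{equation*}
which is the claim. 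I do not anticipate any genuine obstacle; the only points deserving a little care are the passage from $\{S>am\}$ to $\{S\ge\lceil am\rceil\}$ and the degenerate case $\lceil am\rceil>m$, both dispatched above.
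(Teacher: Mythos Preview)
Your proof is correct and follows essentially the same approach as the paper: both reduce $\{S>am\}$ to the existence of a size-$\lceil am\rceil$ index set on which all $X_i\in B$, apply a union bound to get $\binom{m}{\lceil am\rceil}\PP(B)^{\lceil am\rceil}$, and then invoke $\binom{m}{k}\le(em/k)^k$. You are slightly more explicit about the edge case $\lceil am\rceil>m$ and the final inequality $em\PP(B)/\lceil am\rceil\le e\PP(B)/a$, but the argument is the same.
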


\begin{proof}
  Throughout this proof, we denote by $\mathcal{P}_{k}$ the collection of subsets of $[m]$ with size $k$.
  Notice that
\begin{align*}
    \mathbb{P} \left( \sum_{i=1}^m 1_{\{ X_i \in B \}} > am \right) &= \mathbb{P} \left( \text{there exists } S \in \mathcal{P}_{ \lceil am \rceil } : X_i \in B, \text{ for all } i \in S \right)\\
    &\leq \sum_{ S \in \mathcal{P}_{ \lceil am \rceil } } \PP(B)^{\lceil am \rceil} \leq \binom{m}{\lceil am \rceil} \PP(B)^{\lceil am \rceil} .
\end{align*}
Combining this with the standard bound $\binom{m}{k} \leq \left( \frac{em}{k}\right)^k$ concludes the proof.
\end{proof}

Second, we provide a lemma that bounds the number of edges in the sparse Erd\H{o}s-R\'enyi random graph.
\begin{lemma}\label{lemma:number_of_edges_ER}
Let $G_{n}$ be distributed according to $G\big( n, \frac{d}{n} \big)$. Given $z > 0$, there exists $a>0$ such that, for all $n \in \N$,
\begin{equation}
\P\big( |E_{n}| \geq an \big) \leq e^{-zn}.
\end{equation}
\end{lemma}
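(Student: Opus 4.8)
The plan is to use that $|E_n|$ is exactly a $\mathrm{Binomial}\big(\binom{n}{2},\tfrac{d}{n}\big)$ random variable, being the sum of the $\binom{n}{2}$ independent $\mathrm{Bernoulli}(\tfrac{d}{n})$ indicators of the potential edges of $G_n$. Thus the statement is a routine Chernoff-type tail estimate, and the only point deserving a little care is that the threshold $a$ must be chosen uniformly in $n$.

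Concretely, I would apply the exponential Markov inequality with parameter $1$ and then use $1+x\le e^{x}$:
\begin{equation*}
\P\big(|E_n|\geq an\big)\leq e^{-an}\,\E\big[e^{|E_n|}\big]=e^{-an}\Big(1-\tfrac{d}{n}+\tfrac{d}{n}e\Big)^{\binom{n}{2}}\leq \exp\!\Big(-an+\binom{n}{2}\tfrac{d}{n}(e-1)\Big).
\end{equation*}
Since $\binom{n}{2}\tfrac{d}{n}=\tfrac{(n-1)d}{2}\leq\tfrac{nd}{2}$, the right-hand side is at most $\exp\!\big(n(-a+\tfrac{d}{2}(e-1))\big)$, so it suffices to take any $a\geq z+\tfrac{d}{2}(e-1)$, which is a valid choice of $a>0$ that does not depend on $n$. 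This already yields $\P(|E_n|\geq an)\leq e^{-zn}$ for every $n$.

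As an alternative, I could instead invoke Lemma~\ref{lemma:leiBinomial} directly, with $m=\binom{n}{2}$, the $X_i$'s being the edge indicators and $B=\{1\}$ so that $\PP(B)=\tfrac{d}{n}$: after the harmless inclusion $\{|E_n|\geq an\}\subseteq\{|E_n|>(a-1)n\}$ (valid for $n\geq 1$) and the choice of threshold parameter $\tilde a=(a-1)n/\binom{n}{2}$, the lemma gives $\P(|E_n|\geq an)\leq\big(\tfrac{e(d/n)}{\tilde a}\big)^{\lceil(a-1)n\rceil}$; one then checks $\tfrac{e(d/n)}{\tilde a}=\tfrac{ed(n-1)}{2(a-1)n}\leq\tfrac{ed}{2(a-1)}$ uniformly in $n$ and picks $a$ large enough that $(a-1)\log\tfrac{ed}{2(a-1)}\leq -z$.

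There is no real obstacle here; the only things to watch are the uniformity in $n$ (guaranteed by $\binom{n}{2}\tfrac{1}{n}\leq\tfrac{n}{2}$) and, in the second route, the cosmetic passage from $\geq$ to $>$ needed to apply Lemma~\ref{lemma:leiBinomial}.
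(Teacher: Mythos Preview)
Your primary argument is correct and is essentially identical to the paper's proof: both apply the exponential Markov inequality with parameter $1$ to the $\binomial\big(\binom{n}{2},\tfrac{d}{n}\big)$ variable $|E_n|$, bound the moment generating function via $1+x\le e^x$, and then choose $a$ large. Your version is in fact slightly tidier, since the paper bounds $\binom{n}{2}$ by $n^2$ rather than $\tfrac{n^2}{2}$; the alternative route through Lemma~\ref{lemma:leiBinomial} is unnecessary but also fine.
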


\begin{proof}
  Notice that $|E_{n}| \sim \binomial\left(\frac{n(n-1)}{2}, \frac{d}{n}\right)$.
  From this, we obtain
\begin{equation*}
\begin{split}
\P \big( |E_{n}| \geq an \big) & \leq e^{-an}\E\big[ e^{|E_{n}|} \big] \leq e^{-an}\left(e\frac{d}{n}+1-\frac{d}{n} \right)^{n^{2}} \\
& \leq e^{-an}e^{(e-1)\frac{d}{n}n^{2}} \leq e^{-zn},
\end{split}
\end{equation*}
if $a$ is chosen large enough, concluding the proof.
\end{proof}

Finally, we prove a lemma that regards the amount of vertices whose neighborhoods of a given size are large.
\begin{lemma}\label{lemma:caputoadaptado}
Let $G_{n}$ denote the uniform random graph with $m_{n}$ edges, where $\tfrac{m_{n}}{n} \to \tfrac{d}{2}$, or the Erd\"{o}s-R\'{e}nyi random graph $G \big( n, \tfrac{d}{n} \big)$.
For any $\varepsilon>0$
\begin{equation}\label{eq:bad_event}
\limsup_{h \to \infty} \limsup_{S \to \infty} \limsup_{n \to \infty} \frac{1}{n} \log \mathbb{P}\left(  \frac{1}{n} \sum_{v \in V_n} \mathbf{1}_{\{|[G_n,v]_h| > S\}}  > \varepsilon \right) = -\infty.
\end{equation}
\end{lemma}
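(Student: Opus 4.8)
The plan is to reduce to the Erd\"os-R\'enyi model and then feed the ``no dense neighbourhood'' bound of Bordenave--Caputo into the compactness results of Appendix~\ref{app:compacity}; since $|[G_n,v]_h|$ depends only on the underlying graph, the marks play no role in what follows. For the reduction, observe that conditioning $G(n,d/n)$ on $\{|E_n|=m_n\}$ produces exactly the uniform graph on $\mathcal{G}_{n,m_n}$, and since $m_n/n\to d/2$ we have $m_n=\tfrac{dn}{2}+o(n)$, so the local limit estimate for the binomial gives $\mathbb{P}(|E_n|=m_n)=e^{-o(n)}$ under $G(n,d/n)$. Hence, for any graph event $A$,
\begin{equation*}
\mathbb{P}^{\rm unif}(A)=\frac{\mathbb{P}^{\rm ER}(A,\,|E_n|=m_n)}{\mathbb{P}^{\rm ER}(|E_n|=m_n)}\le e^{o(n)}\,\mathbb{P}^{\rm ER}(A),
\end{equation*}
so it suffices to prove~\eqref{eq:bad_event} for $G_n=G(n,d/n)$.

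The key input is the following estimate, which is essentially~\cite[Lemma~2.3]{bc}: for every $M>0$ there is a continuous increasing function $\delta_M\colon[0,1]\to[0,\infty)$ with $\delta_M(0)=0$ such that, writing $\mathcal{N}_M$ for the event that $\sum_{v\in S}\deg_{G_n}(v)\le n\,\delta_M(|S|/n)$ for every $S\subseteq[n]$, one has $\limsup_n\tfrac1n\log\mathbb{P}(\mathcal{N}_M^{c})\le -M$. I would prove this by a union bound over the cardinality $k=|S|$: for a fixed $k$-set $S$, the sum $\sum_{v\in S}\deg_{G_n}(v)$ is stochastically dominated by $2\,\mathrm{Bin}(kn,d/n)$, so for $x$ exceeding twice its mean Chernoff's inequality gives $\mathbb{P}\big(\sum_{v\in S}\deg_{G_n}(v)>x\big)\le\big(2ekd/x\big)^{x/2}$. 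Choosing $\delta_M(\alpha)$ of order $M/\log(1/\alpha)$ for $\alpha$ small and a large constant for $\alpha$ bounded away from $0$, and taking $x=n\,\delta_M(k/n)$, the exponent $\tfrac{x}{2}\log\big(x/(2ekd)\big)$ is of order $Mn$ (because $\log(x/(2ekd))$ is of order $\log(n/k)$); hence, with the scale of $\delta_M$ chosen large enough, it exceeds $nH(k/n)+Mn$ for all $1\le k\le n$, and summing the bound $\binom{n}{k}e^{-(nH(k/n)+Mn)}$ over $k$ gives $\mathbb{P}(\mathcal{N}_M^{c})\le n\,e^{-Mn}$. (The case $k=n$ is Lemma~\ref{lemma:number_of_edges_ER}.)

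Granting this, fix $M>0$ with the associated $\delta_M$. On $\mathcal{N}_M$ the underlying graph satisfies the hypothesis of Lemma~\ref{le:tightLWC} (equivalently \cite[Lemma~2.3]{bc}) with $\delta_M$, so $U(G_n)$ lies in a fixed compact subset $\bar{\Pi}(\delta_M)$ of $\mathcal{P}(\mathcal{G}_*)$. Writing $e(v,h)$ for the number of edges within distance $h$ of $v$: since $\bar{\Pi}(\delta_M)$ is tight and, by Lemma~\ref{le:compactloc}, every compact subset of $\mathcal{G}_*$ sits inside a set of the form $\{g:e(g_t)\le\varphi(t)\ \forall t\}$, a diagonal argument produces an increasing $\varphi_M\colon\mathbb{N}\to\mathbb{R}_+$ and a sequence $\psi_M(h)\downarrow0$ such that every $\mu\in\bar{\Pi}(\delta_M)$ satisfies $\mu\big(\{[g,o]:e(g_h)>\varphi_M(h)\}\big)\le\psi_M(h)$ for all $h$. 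Applying this to $\mu=U(G_n)$ and using that each ball $(G_n,v)_h$ is connected, so $|[G_n,v]_h|\le e(v,h)+1$, we obtain on $\mathcal{N}_M$, for all $h$ and all $S\ge\varphi_M(h)+1$,
\begin{equation*}
\frac1n\sum_{v\in V_n}\mathbf{1}_{\{|[G_n,v]_h|>S\}}\ \le\ \frac1n\,\#\{v\in V_n:e(v,h)>\varphi_M(h)\}\ \le\ \psi_M(h).
\end{equation*}

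To conclude, fix $\varepsilon>0$ and $M>0$ and pick $h_0$ with $\psi_M(h)<\varepsilon$ for $h\ge h_0$. For $h\ge h_0$ and $S\ge\varphi_M(h)+1$ the event in~\eqref{eq:bad_event} forces $\mathcal{N}_M^{c}$, so its probability is at most $n\,e^{-Mn+o(n)}$; taking $\limsup_n$, then $\limsup_S$ (eventually $S\ge\varphi_M(h)+1$), then $\limsup_h$ (eventually $h\ge h_0$), the left-hand side of~\eqref{eq:bad_event} is $\le -M$, and letting $M\to\infty$ yields $-\infty$. The main obstacle is the structural estimate of the second paragraph, i.e.\ producing a single profile $\delta_M$ for which the Chernoff tail dominates the union-bound entropy at \emph{all} scales $|S|=k$ simultaneously --- the small-$k$ regime, where $\delta_M$ is forced to tend to $0$, is the delicate point; by contrast, the compactness bookkeeping in the third paragraph is routine but must be carried out with some care to get $\psi_M(h)\downarrow0$.
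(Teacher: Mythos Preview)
Your proof is correct, and the probabilistic core --- the union bound over $|S|=k$ with the profile $\delta_M(\alpha)\sim M/\log(1/\alpha)$ and a Chernoff estimate on $\deg_{G_n}(S)$ --- is exactly what the paper does (their Second step, with the explicit choice $\delta(x)=-z/\log(x/2)$ and $\lambda=-\log(k/2n)$). Two aspects of your route differ from the paper's, and both are worth noting.

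First, you reduce the uniform model to Erd\H{o}s--R\'enyi by conditioning on $\{|E_n|=m_n\}$ and using $\mathbb{P}^{\rm ER}(|E_n|=m_n)=e^{-o(n)}$. The paper instead treats the two models separately, redoing the Chernoff step for the uniform graph via a direct stochastic domination of $\deg_{G_n}(U)$ by $\mathrm{Bin}(|U|n,3d/n)$ after restricting to $|U|\le n/8$. Your reduction is cleaner and saves the repetition.

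Second, and more substantively, your deterministic step is abstract where the paper's is explicit. You invoke Lemma~\ref{le:tightLWC} to place $U(G_n)$ in a compact set $\bar\Pi(\delta_M)\subset\mathcal{P}(\mathcal{G}_*)$, then use Prokhorov tightness and Lemma~\ref{le:compactloc} to extract $\varphi_M,\psi_M$ by a diagonal argument. The paper (their First step) instead iterates the degree bound directly: from $|B(U,h)|\le\deg(B(U,h-1))\le n\,\delta(|B(U,h-1)|/n)$ one gets $|B(U,h)|\le n\,\delta^{\circ h}(|U|/n)$, and since $\delta^{\circ h}(0)=0$ this gives an explicit threshold $\mathcal{C}(\delta,\varepsilon,h)$ such that the fraction of $v$ with $|[G_n,v]_h|>\mathcal{C}$ is below $\varepsilon$. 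The paper's argument is more hands-on and yields an explicit $S$-threshold; yours is softer but perfectly adequate here, since only the existence of \emph{some} threshold is needed for~\eqref{eq:bad_event}. One small point to make precise in your write-up: the claim that every compact $K\subset\mathcal{G}_*$ sits inside some $\{g:e(g_t)\le\varphi(t)\ \forall t\}$ is the converse direction of Lemma~\ref{le:compactloc} and needs a one-line justification (for each fixed $t$, $g\mapsto e(g_t)$ is locally constant hence continuous, so bounded on $K$).
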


\begin{proof}
  This proof is a consequence of~\cite[Lemmas 2.3 and 6.2]{bc}.
\end{proof}

\noindent \textbf{Data Availability Statement.} Data sharing not applicable to this article as no datasets were generated or analysed during the current study.

\noindent\textbf{Competing Interests.}  The authors declare that they have no financial or non-financial competing interests related to the work submitted.

\bibliographystyle{plain}
\bibliography{mybib}

\end{document}